\DeclareMathOperator{\Cu}{Cu}
\DeclareMathOperator{\Lsc}{Lsc}
\DeclareMathOperator{\LAff}{LAff}
\DeclareMathOperator{\Aff}{Aff}
\DeclareMathOperator{\QT}{QT}
\DeclareMathOperator{\rk}{rk}
\DeclareMathOperator{\tr}{tr}
\newcommand{\txtSoft}{\mathrm{soft}}
\newcommand{\txtLsc}{\mathrm{lsc}}
\newcommand{\charFct}{\mathbb{1}}
\newcommand{\andSep}{\,\,\,\text{ and }\,\,\,}
\newcommand{\CuSgp}{$\CatCu$-sem\-i\-group}
\newcommand{\CuMor}{$\CatCu$-mor\-phism}
\newcommand{\NNbar}{\overline{\NN}}
\newcommand{\dimnuc}{\dim_{\text{nuc}}}
\newcommand{\pom}{positively ordered monoid}
\newcommand{\AW}{$AW^*$}
\newcommand{\ZZ}{{\mathbb{Z}}}
\newcommand{\NN}{{\mathbb{N}}}
\newcommand{\KK}{{\mathbb{K}}}
\newcommand{\CC}{{\mathbb{C}}}
\newcommand{\RR}{{\mathbb{R}}}
\newcommand{\id}{{\mathrm{id}}}
\newcommand{\dist}{{\mathrm{dist}}}
\newcommand{\ca}{$C^*$-al\-ge\-bra}
\newcommand{\stHom}{${}^*$-homomorphism}
\newcommand{\CatCu}{\ensuremath{\mathrm{Cu}}}
\newcommand{\axiomO}[1]{(O#1)}
\newcommand{\Bdd}{\mathcal{B}}
\newtheorem{lma}{Lemma}[section]
\numberwithin{equation}{lma}
\newaliascnt{thmCt}{lma}
\newtheorem{thm}[thmCt]{Theorem}
\newaliascnt{corCt}{lma}
\newtheorem{cor}[corCt]{Corollary}
\newaliascnt{prpCt}{lma}
\newtheorem{prp}[prpCt]{Proposition}
\newtheorem*{thm*}{Theorem}
\newtheorem*{cor*}{Corollary}
\newtheorem*{prop*}{Proposition}
\theoremstyle{definition}
\newaliascnt{pgrCt}{lma}
\newtheorem{pgr}[pgrCt]{}
\newaliascnt{dfnCt}{lma}
\newtheorem{dfn}[dfnCt]{Definition}
\newaliascnt{rmkCt}{lma}
\newtheorem{rmk}[rmkCt]{Remark}
\newaliascnt{rmksCt}{lma}
\newtheorem{rmks}[rmksCt]{Remarks}
\newaliascnt{exaCt}{lma}
\newtheorem{exa}[exaCt]{Example}
\newaliascnt{qstCt}{lma}
\newtheorem{qst}[qstCt]{Question}
\newaliascnt{cnjCt}{lma}
\newtheorem{cnj}[cnjCt]{Conjecture}
\title{Ranks of operators in simple \texorpdfstring{$C^*$-algebras}{C*-algebras} with stable rank one}
\date{\today}
\author{Hannes Thiel}
\address{Hannes Thiel
Mathematisches Institut, Fachbereich Mathematik und Informatik der
Universit\"at M\"unster, Einsteinstrasse 62, 48149 M\"unster, Germany.}
\email{hannes.thiel@uni-muenster.de}
\urladdr{www.math.uni-muenster.de/u/hannes.thiel/}
\thanks{The author was partially supported by the Deutsche Forschungsgemeinschaft (SFB 878 Groups, Geometry \& Actions).}
\keywords{simple C*-algebra, rank of operator, stable rank one, Cuntz semigroup, Toms-Winter conjecture, quasitrace, dimension function}
\subjclass[2010]%
{Primary
46L05; 
Secondary
06B35, 
06F05, 
19K14, 
46L35, 
46L80. 
}
\begin{document}

\begin{abstract}
Let $A$ be a simple \ca{} with stable rank one.
We show that every strictly positive, lower semicontinuous, affine function on the simplex of normalized quasitraces of $A$ is realized as the rank of an operator in the stabilization of $A$.

Assuming moreover that $A$ has locally finite nuclear dimension, we deduce that $A$ is $\mathcal{Z}$-stable if and only if it has strict comparison of positive elements.
In particular, the Toms-Winter conjecture holds for simple, approximately subhomogeneous \ca{s} with stable rank one.
\end{abstract}

\maketitle

\section{Introduction}

The rank of a matrix is one of the most fundamental notions in linear algebra.
In this context, the following two standard facts concerning matrix ranks are repeatedly
used:
\begin{itemize}
\item
\emph{Comparison}:
We have $\rk(x)\leq\rk(y)$ if and only if $x=rys$ for some $r,s$.
\item
\emph{Range}:
$n\times n$-matrices realize the ranks $0,1,2,\ldots,n$.
\end{itemize}

The rank of a projection can be computed as its trace.
Similarly, the rank of a projection in the algebra $\Bdd(H)$ of bounded operators on a separable, infinite-dimensional Hilbert space $H$ is computed by the canonical (unbounded) trace $\tr$.
If two projections $p$ and $q$ satisfy $p=rqs$ for some $r,s$, then one says that $p$ is \emph{Murray-von Neumann subequivalent} to $q$, denoted $p\precsim q$.
The following facts hold for projections in $\Bdd(H)$:
\begin{itemize}
\item
\emph{Comparison}:
We have $\tr(p)\leq\tr(q)$ if and only if $p\precsim q$.
\item
\emph{Range}:
Projections in $\Bdd(H)$ realize the ranks $0,1,2,\ldots,\infty$.
\end{itemize}

There is no bounded trace on $\Bdd(H)$, but Murray and von Neumann discovered the class of $\mathrm{II}_1$ factors, which are simple, infinite-dimensional von Neumann algebras such that the unit is a finite projection.
Every $\mathrm{II}_1$ factor has a unique bounded trace (normalized at the unit).
Murray and von Neumann proved the following fundamental facts about projections in a $\mathrm{II}_1$ factor $M$:
\begin{enumerate}
\item[(C)]
\emph{Comparison}:
We have $\tr(p)\leq\tr(q)$ if and only if $p\precsim q$.
\item[(R)]
\emph{Range}:
For every $t\in[0,1]$ there is a projection $p\in M$ with $\tr(p)=t$.
\end{enumerate}

It is natural to ask whether these two properties have analogues for simple, tracial \ca{s}. 
The analogue of (C) is called \emph{strict comparison} (see \autoref{rmk:TW} for the definition), and it is known \emph{not} to hold automatically, unlike for $\mathrm{II}_1$ factors.

On the other hand, it is not known if the analogue of (R) is automatic for simple \ca{s}.
The main result of this paper is that it is automatic for simple \ca{s} with stable rank one.

Recall that a unital \ca{} is said to have \emph{stable rank one} if its invertible elements form a dense subset.
The stable rank is a noncommutative dimension theory that was introduced to study nonstable $K$-theory.
Just as topological spaces are more tractable if they have low dimension, \ca{s} with minimal stable rank (that is, stable rank one) are accessible to techniques that do not apply in general.

Let us describe the \ca{ic} analogue of (R).
While $\mathrm{II}_1$ factors have a unique normalized trace, this is no longer the case for simple \ca{s}.
This means that there is more than one way to `measure' the rank of a projection.
Given a general unital, simple \ca{} $A$, the normalized traces on $A$ form a Choquet simplex $T_1(A)$.
The \emph{rank} of a projection $p$ in $A$ is defined as the function $\rk(p)\colon T_1(A)\to [0,1]$, given by $\rk(p)(\tau):=\tau(p)$.
The question is then which functions $T_1(A)\to[0,1]$ are realized as the rank of a projection in $A$.
Given a $\mathrm{II}_1$ factor $M$, the space $T_1(M)$ is a singleton.
Therefore, property (R) says that every function $T_1(M)\to[0,1]$ is realized by a projection in $M$.

To formulate the accurate \ca{ic} analogue of (R), we have to apply two changes.
First, we have to replace traces on $A$ by $2$-quasitraces (see \autoref{pgr:qt} for definitions).
This is a minor change that can be ignored in many important cases.
For example, for exact \ca{s} there is no distinction between traces and $2$-quasitraces.
Second, we need to replace projections in $A$ by positive elements in the stabilization $A\otimes\KK$.
This change is more fundamental and cannot be ignored in applications.
The reason is that many interesting \ca{s} have no nontrivial projections.
In this case it is necessary to consider the rank of more general elements to obtain the correct analogue of (R).

As for traces, the normalized $2$-quasitraces form a Choquet simplex $\QT_1(A)$.
Given a positive element $x$ in $A\otimes\KK$, the \emph{rank} of $x$ at $\tau\in\QT_1(A)$ is defined as
\[
d_\tau(x) = \lim_{n\to\infty} \tau( x^{1/n} ).
\]
We call the resulting map $\rk(x)\colon \QT_1(A)\to[0,\infty]$, given by $\rk(x)(\tau):=d_\tau(x)$, the \emph{rank} of $x$.
The function $\rk(x)$ is lower semicontinuous and affine.
If $x\neq 0$, then $\rk(x)$ is also strictly positive, and we write $\rk(x)\in\LAff(\QT_1(A))_{++}$;
see \autoref{pgr:LAff}.
Thus, the precise question is:

\begin{qst}
\label{qst:range}
Let $A$ be a separable, unital, simple, non-elementary, stably finite \ca{}, 
and let $f\in\LAff(\QT_1(A))_{++}$.
Is there $x\in(A\otimes\KK)_+$ with $\rk(x)=f$?
\end{qst}

This question was first explicitly posed by N.~Brown.
Affirmative answers have been obtained in the following cases:
\begin{enumerate}
\item
under the additional assumption that $A$ tensorially absorbs the Jiang-Su algebra $\mathcal{Z}$, by Elliott-Robert-Santiago \cite[Corollary~6.8]{EllRobSan11Cone}, extending earlier work by Brown-Perera-Toms that covered the tensorially $\mathcal{Z}$-absorbing, exact case \cite[Theorem~5.5]{BroPerTom08CuElliottConj};
\item
under the assumption that $A$ is exact, has strict comparison of positive elements, and such that $\QT_1(A)$ is a Bauer simplex whose extreme boundary has finite covering dimension by Dadarlat-Toms \cite[Theorem~1.1]{DadTom10Ranks}.
\end{enumerate}

In general, \autoref{qst:range} is still open.
The main result of this paper provides an affirmative answer under the assumption that $A$ has stable rank one:

\begin{thm*}[{\ref{prp:realizeRankCa}}]
Let $A$ be a separable, unital, simple, non-elementary \ca{} with stable rank one. 
Then for every $f\in\LAff(\QT_1(A))_{++}$ there exists $x\in (A\otimes\KK)_+$ with $\rk(x)=f$.
\end{thm*}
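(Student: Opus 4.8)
The plan is to work inside the Cuntz semigroup. Write $\Cu(A)=\Cu(A\otimes\KK)$, and let $\lambda\colon\Cu(A)\to\LAff(\QT_1(A))_+$ denote the rank map $\lambda([x])=\rk(x)$, sending $[x]$ to $\tau\mapsto d_\tau(x)$. This is a $\CatCu$-morphism: it is additive, order-preserving, and preserves suprema of increasing sequences. The statement to be proved is exactly that the image of $\lambda$ contains $\LAff(\QT_1(A))_{++}$. Since $\QT_1(A)$ is a metrizable Choquet simplex, every $f\in\LAff(\QT_1(A))_{++}$ is the pointwise supremum of an increasing sequence $f_1\ll f_2\ll\cdots$ in $\LAff(\QT_1(A))_{++}$. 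Because $\lambda$ preserves suprema of increasing sequences, it therefore suffices to construct an increasing sequence $[x_1]\leq[x_2]\leq\cdots$ in $\Cu(A)$ with $\lambda([x_n])\leq f$ for all $n$ and $\lambda([x_n])\geq f_n$; then $\sup_n\lambda([x_n])=\sup_n f_n=f$, and $x:=\sup_n[x_n]$ exists in $\Cu(A)$ and satisfies $\rk(x)=\lambda(x)=f$.

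The first simplification I would make is to note that order and additivity of rank come essentially for free in the stabilization. Since $A\otimes\KK$ is stable, any two positive elements admit mutually orthogonal unitarily equivalent copies, whence $\lambda([x]+[z])=\lambda([x])+\lambda([z])$ and automatically $[x]\leq[x]+[z]$. Thus the inductive enlargement step reduces to producing, at each stage, an \emph{increment} $[z_n]$ and setting $[x_{n+1}]:=[x_n]+[z_n]$. For this to keep $\lambda([x_{n+1}])$ within $f$ while overshooting $f_{n+1}$, I need the rank of the increment to lie in a prescribed band: concretely, $\bigl(f_{n+1}-\lambda([x_n])\bigr)_+\leq\lambda([z_n])\leq f-\lambda([x_n])$. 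The band has strictly positive width because $f_{n+1}\ll f$, so there is genuine room to work in. The whole theorem therefore rests on a single \emph{band-realization lemma}: given $h\in\LAff(\QT_1(A))_{++}$ and strictly positive slack $s$, find $[z]\in\Cu(A)$ with $h\leq\lambda([z])\leq h+s$.

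Proving this band-realization lemma is the main obstacle, and it is the step where stable rank one is indispensable. I would pass to \emph{soft} elements, which carry no projection part and can be freely stretched, so that one can match an arbitrary lower semicontinuous affine profile rather than only the rigid values coming from $K_0(A)$. The construction is a patching over the Choquet simplex $\QT_1(A)$: using that $A$ is simple and non-elementary, Cuntz's argument produces nonzero positive elements of arbitrarily small rank, and one assembles these into an element whose rank approximates a chosen continuous affine function lying in the band. The delicate points are (i) that the assembled rank varies \emph{continuously and affinely} over $\QT_1(A)$, not merely being uniformly small, which forces a careful decomposition of the target relative to a partition of the simplex, and (ii) that the pieces can be combined and rescaled so that the total rank lands exactly in the band. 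For (ii) I would invoke the structural regularity of $\Cu(A)$ available under stable rank one---in particular a form of almost divisibility together with cancellation and the realizability of way-below classes by honest sub-elements---to adjust the profile without overshooting $h+s$ or falling below $h$. This refinement is precisely what fails in general and is what the stable rank one hypothesis supplies; I expect it to be the step requiring the most care.

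Finally, iterating the enlargement step with increments furnished by the band-realization lemma produces the desired increasing sequence $[x_1]\leq[x_2]\leq\cdots$ with $\lambda([x_n])\leq f$ and $\lambda([x_n])\geq f_n$ for all $n$. Passing to the supremum $x=\sup_n[x_n]$ in $\Cu(A)$ and using continuity of the $\CatCu$-morphism $\lambda$ yields a positive element $x\in(A\otimes\KK)_+$ with $\rk(x)=\sup_n\lambda([x_n])=f$, as required.
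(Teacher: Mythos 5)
Your reduction to a single ``band-realization lemma'' is sound in spirit---it is essentially the paper's \autoref{prp:realizeRankApprox}---but your sketch of that lemma contains the two genuine gaps. First, the appeal to ``a form of almost divisibility \ldots available under stable rank one'' is circular: almost divisibility of $\Cu(A)$ is not known to follow from stable rank one a priori; it is a \emph{consequence} of the realization theorem (via \autoref{prp:charAlmDiv} and \autoref{prp:almDivFromAlmUnperf}), so it cannot be an input. Second, the ``partition of the simplex'' patching is the Dadarlat--Toms step-function strategy, which requires $\QT_{1\mapsto 1}(A)$ to be a Bauer simplex with finite-dimensional extreme boundary; on a general metrizable Choquet simplex the extreme boundary need not be closed and admits no such partition, and elements of small rank at each trace separately give no simultaneous affine control. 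The paper's mechanism is different: the continuous target $g$ is approximated from above by \emph{finite infima of chisels} $g(\lambda)+\tfrac{\varepsilon}{2}+\sigma_\lambda$ at extreme points. Realizing chisels (\autoref{prp:realizeChisel}) needs Edwards' condition, proved by passing to Haagerup's \AW-completion and comparing support projections in an \AW-factor (\autoref{prp:EdwardsCa}); realizing the infima in $\Cu(A)$ (\autoref{prp:fctlInf}) needs the new axiom \axiomO{6+}, and verifying \axiomO{6+} for stable rank one (\autoref{prp:RieszSR1}, via unitaries implementing Cuntz comparison inside hereditary subalgebras) is the actual point of entry of the hypothesis. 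None of these ingredients, in particular the two genuinely new ones, appear in your outline.

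There is also a flaw in your iteration. Additive increments can only \emph{add} rank, and each application of the band lemma commits an overshoot of up to the band width $s_n>0$ that can never be corrected; so with your scheme $\sup_n\lambda([x_n])$ lands somewhere in the interval between $f$ and $f+\sum_n s_n$, not at $f$, whenever $f$ is finite. Trying instead to enforce $\lambda([x_{n+1}])\leq f$ exactly forces you to realize ranks inside a band whose upper edge is $f-\lambda([x_n])$, a difference of lower semicontinuous functions that is in general not lower semicontinuous, and no continuous affine target inside such a band need exist (sandwich theorems on a simplex require an upper semicontinuous function below a lower semicontinuous one). The paper sidesteps chains entirely: \autoref{prp:LfUpDirected} shows---using weak cancellation, \axiomO{5} and realized functional infima---that the set $L_f'$ of \emph{all} elements whose rank is way below $f$ is upward directed, and its supremum $\alpha(f)$ has rank exactly $f$ (\autoref{prp:maxSoft}, \autoref{prp:realizeRank}). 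A chain argument of the kind you propose does work when $S$ is almost unperforated, where $\widehat{b_n}\leq\widehat{b_{n+1}}$ for soft elements implies $b_n\leq b_{n+1}$ (as in the proof of \autoref{prp:charAlmDiv}), but almost unperforation is not available under stable rank one alone.
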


An important regularity property in the theory of simple \ca{s} is tensorial absorption of the Jiang-Su algebra $\mathcal{Z}$:
A \ca{} $A$ is said to be \emph{$\mathcal{Z}$-stable} if $A\cong A\otimes\mathcal{Z}$.
This is the \ca{ic} analogue of being a McDuff factor.
The Jiang-Su algebra is a separable, unital, simple, non-elementary \ca{} that is $KK$-equivalent to $\CC$.

By \cite[Theorem~6.7]{Ror04StableRealRankZ}, every unital, simple, stably finite, $\mathcal{Z}$-stable \ca{} has stable rank one.
Thus, \autoref{prp:realizeRankCa} generalizes the results in \cite{EllRobSan11Cone} and \cite{BroPerTom08CuElliottConj} mentioned above.
However, the assumption of stable rank one is much less restrictive than that of $\mathcal{Z}$-stability, and it can in fact be used to \emph{prove} $\mathcal{Z}$-stability;
see for example \autoref{prp:TWlocFinNuclDim}.

Moreover, vast classes of naturally occurring simple, non-$\mathcal{Z}$-stable \ca{s} have stable rank one, including a wealth of non-classifiable, nuclear \ca{s}, \cite{EllHoTom09ClassSimpleSR1, Vil98SimpleCaPerforation, Tom08ClassificationNuclear}, reduced group \ca{s} of free products, \cite{DykHaaRor97SRFreeProd}, and crossed products of dynamical systems with a Cantor factor, \cite{ArcPhi15arX:SRCentrLargeSub, GioKer10SubshiftsPerf}.

\autoref{prp:realizeRankCa} has important consequences for the structure of simple, nuclear \ca{s} with stable rank one.
In particular, it provides the first verification of the Toms-Winter conjecture for a large class of \ca{s} without restriction on the geometry of the simplex of traces.
The Toms-Winter conjecture predicts that three regularity properties of very different natures are equivalent for a separable, unital, simple, non-elementary, nuclear \ca{} $A$:
\begin{enumerate}
\item
$A$ has finite nuclear dimension.
\item
$A$ is $\mathcal{Z}$-stable.
\item
$A$ has strict comparison of positive elements.
\end{enumerate}
We discuss these conditions and the previously known implications in \autoref{sec:TW}.
The condition of finite nuclear dimension is of great importance since it leads to classification by $K$-theoretic and tracial data.
The completion of the classification program is one of the great achievements in operator algebras, obtained in a series of remarkable breakthroughs in the last three years, building on an extensive body of work over decades by numerous people;
see \cite{EllGonLinNiu15arX:classFinDR2} and \cite[Corollary~D]{TikWhiWin17QDNuclear}.

However, in applications it is sometimes difficult to verify finite nuclear dimension directly.
A positive solution of the Toms-Winter conjecture allows finite nuclear dimension to be deduced from relatively simple, verifiable conditions such as strict comparison of positive elements.
We obtain the following partial verifications of the Toms-Winter conjecture:

\begin{thm*}[{\ref{prp:TWlocFinNuclDim}}]
Let $A$ be a separable, unital, simple, non-elementary \ca{} with stable rank one and locally finite nuclear dimension.
Then $A$ is $\mathcal{Z}$-stable if and only if $A$ has strict comparison of positive elements.
\end{thm*}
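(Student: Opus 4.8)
The statement is an equivalence, and the two implications are of very different character, so I would treat them separately. For the direction ``$A$ is $\mathcal{Z}$-stable $\Rightarrow$ $A$ has strict comparison'' there is nothing new to do: every simple, $\mathcal{Z}$-stable \ca{} has almost unperforated Cuntz semigroup, which is exactly strict comparison of positive elements, by R{\o}rdam \cite{Ror04StableRealRankZ}. The entire content of the proposition therefore lies in the converse, where \autoref{prp:realizeRankCa} is brought to bear.

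For the converse the plan is to show that $A$ is \emph{pure} in the sense of Winter, that is, that $\Cu(A)$ is simultaneously almost unperforated and almost divisible, and then to invoke Winter's theorem that a separable, simple, unital, non-elementary \ca{} with locally finite nuclear dimension which is pure is $\mathcal{Z}$-stable. (Here one uses that locally finite nuclear dimension entails nuclearity, so that Winter's result is applicable.) Almost unperforation is merely a reformulation of strict comparison, again by R{\o}rdam, so under our standing assumption it is automatic. Thus the only nontrivial point is to establish \emph{almost divisibility} of $\Cu(A)$.

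This is precisely where the rank realization result enters. To verify almost divisibility I would fix $x\in\Cu(A)$, an element $x'\ll x$, and $k\in\NN$, set $f:=\rk(x)\in\LAff(\QT_1(A))_{++}$, and apply \autoref{prp:realizeRankCa} to the function $\tfrac{1}{k+1}f$, which again lies in $\LAff(\QT_1(A))_{++}$, to obtain $y\in(A\otimes\KK)_+$ with $\rk(y)=\tfrac{1}{k+1}f$. Then $\rk(ky)=\tfrac{k}{k+1}f<f=\rk(x)$ while $\rk(x')\leq f=\rk((k+1)y)$, and strict comparison converts these rank inequalities into the Cuntz comparisons $ky\leq x$ and $x'\leq(k+1)y$. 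This is exactly almost divisibility; combined with almost unperforation it shows that $A$ is pure, and Winter's theorem then yields $\mathcal{Z}$-stability.

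The main obstacle I anticipate is the careful handling of the boundary between compact and non-compact elements together with the precise use of the way-below relation $\ll$ when passing from rank inequalities to Cuntz comparisons: strict comparison delivers $a\precsim b$ from $d_\tau(a)<d_\tau(b)$ only under hypotheses that must be checked at quasitraces where $f$ attains the value $+\infty$ or where $x$ carries a compact (projection) component, and it is exactly the compact containment $x'\ll x$ that is designed to make these estimates go through. Beyond this technical core, the argument reduces to citing \autoref{prp:realizeRankCa}, R{\o}rdam's identification of strict comparison with almost unperforation, and Winter's $\mathcal{Z}$-stability theorem for pure \ca{s}.
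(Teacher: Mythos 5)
Your overall architecture coincides with the paper's: the forward implication is R{\o}rdam's theorem \cite[Theorem~4.5]{Ror04StableRealRankZ}, and the converse is obtained by verifying almost divisibility of $\Cu(A)$ and citing Winter's result that almost unperforation together with almost divisibility and locally finite nuclear dimension yields $\mathcal{Z}$-stability \cite[Corollary~7.4]{Win12NuclDimZstable}. Where you diverge is in how almost divisibility is extracted from \autoref{prp:realizeRankCa}: the paper invokes \autoref{prp:almDivFromAlmUnperf}, whose proof (through \autoref{prp:charAlmDiv}) routes through soft elements --- rank realization makes $\Cu(A)_\txtSoft$ divisible, where \emph{non-strict} rank inequalities suffice because $\kappa$ is an order-embedding on the soft part (\autoref{prp:charAlmUnp}), and compact elements are then handled by the predecessor map $\varrho$ and the absorption identity $a+x=\varrho(a)+x$. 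You instead feed rank inequalities directly into strict comparison. That route can be made to work, but your concrete step fails at precisely the two points you flag without resolving. (i) With $\rk(y)=\tfrac{1}{k+1}f$ and $f=\rk(x)$, at any quasitrace $\tau$ with $f(\tau)=\infty$ --- which can occur, since $x$ ranges over all of $\Cu(A)$ --- one has $\tfrac{k}{k+1}f(\tau)=f(\tau)$, so strict comparison does not deliver $ky\leq x$. (ii) The inequality $\rk(x')\leq\rk((k+1)y)$ is non-strict, and strict comparison requires strict inequality at \emph{every} normalized quasitrace; this is fatal when $x$ is compact, since then $x'=x$ must be allowed and $\rk(x')=f=\rk((k+1)y)$ identically. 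The compact containment $x'\ll x$ does not by itself produce any strict rank inequality, contrary to your closing remark.

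The repair is short and stays on your route. Assume $x'\neq 0$ and $k\geq 1$ (the other cases are trivial). Choose $x''$ with $x'\ll x''\ll x$; since $x''\ll x\leq\sup_n n[1]$ by simplicity, we get $x''\leq n[1]$ for some $n$, so $g:=\rk(x'')$ lies in $\LAff(\QT_{1\mapsto 1}(A))_{++}$ and is everywhere finite. Apply \autoref{prp:realizeRankCa} to $r:=\tfrac{2}{2k+1}\,g$, which satisfies $\tfrac{1}{k+1}g<r<\tfrac{1}{k}g$ pointwise. Then $d_\tau(ky)=\tfrac{2k}{2k+1}g(\tau)<g(\tau)\leq d_\tau(x)$ and $d_\tau(x')\leq g(\tau)<\tfrac{2k+2}{2k+1}g(\tau)=d_\tau((k+1)y)$ strictly at every $\tau$, so strict comparison gives $ky\leq x$ and $x'\leq(k+1)y$, which is almost divisibility. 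With this emendation your argument is correct and somewhat more elementary than the paper's, bypassing the soft-element machinery; what it does not recover is the additional structural output of \autoref{prp:almDivFromAlmUnperf} (the identifications $\Cu(A)\cong V(A)\sqcup\LAff(\QT_{1\mapsto 1}(A))_{++}\cong\Cu(A\otimes\mathcal{Z})$), which, however, is not needed for the theorem at hand.
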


\begin{thm*}[{\ref{prp:TW-ASHsr1}}]
The Toms-Winter conjecture holds for approximately subhomogeneous \ca{s} with stable rank one.
\end{thm*}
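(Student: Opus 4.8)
The plan is to deduce the Toms-Winter conjecture for approximately subhomogeneous (ASH) \ca{s} with stable rank one by combining the three implications of the conjecture, each of which is either already available in the literature or supplied by the preceding results of this paper. Recall that the conjecture asserts the equivalence of (1) finite nuclear dimension, (2) $\mathcal{Z}$-stability, and (3) strict comparison, for a separable, unital, simple, non-elementary, nuclear \ca{}. Since every ASH \ca{} is nuclear, and since stable rank one entails stable finiteness, the hypotheses place us squarely in the setting where \autoref{prp:TWlocFinNuclDim} applies, provided we can first verify that an ASH \ca{} automatically has \emph{locally finite nuclear dimension}.

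First I would establish that every separable ASH \ca{} has locally finite nuclear dimension. An ASH algebra is, by definition, an inductive limit of subhomogeneous \ca{s}, and each subhomogeneous building block has finite nuclear dimension bounded in terms of the topological dimension of its primitive ideal space. Passing to the inductive limit, one sees that every element is approximated by elements supported in a building block of finite nuclear dimension, which is precisely the statement that the nuclear dimension is locally finite. This reduces the ASH hypothesis to the hypotheses of \autoref{prp:TWlocFinNuclDim}.

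With local finiteness of nuclear dimension in hand, \autoref{prp:TWlocFinNuclDim} immediately gives the equivalence (2)$\Leftrightarrow$(3), namely that $\mathcal{Z}$-stability and strict comparison are equivalent under the standing assumption of stable rank one. It then remains to connect these to finite nuclear dimension (1). The implication (1)$\Rightarrow$(2) is the theorem of Winter that finite nuclear dimension implies $\mathcal{Z}$-stability for separable, unital, simple, non-elementary \ca{s}, while the implication (2)$\Rightarrow$(1) in the nuclear setting is part of the now-completed body of work cited in the introduction (via \cite{EllGonLinNiu15arX:classFinDR2} and \cite{TikWhiWin17QDNuclear}). Chaining these with the equivalence just obtained closes the cycle and yields the full three-way equivalence.

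The main obstacle, and the only step requiring genuine care, is the verification that ASH algebras have locally finite nuclear dimension; the rest is an assembly of known implications together with \autoref{prp:TWlocFinNuclDim}. One must be attentive to whether the nuclear dimension bounds on the subhomogeneous building blocks are genuinely uniform or merely finite for each block, since ``locally finite nuclear dimension'' in the sense used by \autoref{prp:TWlocFinNuclDim} only demands the latter along a suitable approximating net. I expect that the definition of local finiteness is arranged precisely so that the inductive-limit structure of an ASH algebra supplies it directly, so that no uniform bound is needed and the argument goes through cleanly.
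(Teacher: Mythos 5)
Your overall skeleton coincides with the paper's: locally finite nuclear dimension for ASH algebras, then \autoref{prp:TWlocFinNuclDim} for the equivalence of $\mathcal{Z}$-stability and strict comparison, then the known implication (1)$\Rightarrow$(2) due to Winter. But your closing step (2)$\Rightarrow$(1) is a genuine gap. You attribute it to \cite{EllGonLinNiu15arX:classFinDR2} and \cite{TikWhiWin17QDNuclear}, but those are classification theorems for algebras that \emph{already} have finite nuclear dimension and satisfy the UCT; they do not show that $\mathcal{Z}$-stability implies finite nuclear dimension, and no UCT hypothesis is available here in any case. As recorded in \autoref{pgr:TW} and \autoref{rmk:CETWW}, the general implication (2)$\Rightarrow$(1) was only established after this paper was submitted. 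Within the paper, the ASH hypothesis is used a \emph{second} time, precisely to supply this implication: by Elliott--Niu--Santiago--Tikuisis \cite[Theorem~1.1]{EllNiuSanTik15arX:drASH}, every separable, $\mathcal{Z}$-stable, approximately subhomogeneous \ca{} has finite nuclear dimension. Without invoking that result (or the later general theorem of \cite{CasEviTikWhiWin19arX:NucDimSimple}), your cycle of implications does not close.

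There is also a flaw in your sketch of locally finite nuclear dimension, although the conclusion is correct. A subhomogeneous building block need \emph{not} have finite nuclear dimension: its primitive ideal space can be infinite-dimensional (for instance $C(X,M_n)$ with $X$ the Hilbert cube is $n$-homogeneous but has infinite nuclear dimension), so the claim that each block has nuclear dimension ``bounded in terms of the topological dimension of its primitive ideal space'' gives nothing in general. The Ng--Winter argument \cite{NgWin06NoteSH}, which the paper simply cites, repairs this by passing to \emph{finitely generated} subhomogeneous subalgebras: these have finite-dimensional primitive ideal space, hence finite decomposition rank, and every finite subset of an ASH algebra is approximately contained in one. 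So your closing worry about uniformity of the bounds is misplaced --- uniformity is indeed not needed --- but finiteness for each block, which you take for granted, is exactly what can fail.
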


In particular, we obtain that separable, unital, simple, approximately subhomogeneous \ca{s} with stable rank one and strict comparison of positive elements are classified by $K$-theoretic and tracial data.

Theorems~\ref{prp:TWlocFinNuclDim} and~\ref{prp:TW-ASHsr1} apply to large and natural classes of \ca{s}.
For instance, no nuclear \ca{} is known which does not have locally finite nuclear dimension.
Further, all previous partial verifications of the Toms-Winter conjecture required restrictions on the geometry of the simplex of traces.
The most general results so far assumed that the traces form a Bauer simplex with finite-dimensional extreme boundary.
The novelty of our result is that they have no restrictions on the geometry of the trace simplex.

\subsection*{Methods}
The main tool to obtain the results of this paper is the Cuntz semigroup.
It provides a convenient way to organize the comparison theory of positive elements in a \ca{} into a \pom.
Coward-Elliott-Ivanescu, \cite{CowEllIva08CuInv}, initiated a systematic study of Cuntz semigroups by introducing the category $\CatCu$ of abstract Cuntz semigroups, also called \CuSgp{s}.
We refer to Subsections~\ref{sec:Cu} and~\ref{sec:CuA} for details.

Let us explain how to translate \autoref{qst:range} into the setting of \CuSgp{s}.
Given a separable, unital, simple, non-elementary, stably finite \ca{} $A$, its Cuntz semigroup $S:=\Cu(A)$ is a countably based, simple, non-elementary, stably finite \CuSgp{} satisfying certain axioms \axiomO{5} and \axiomO{6}, and the class $u:=[1]$ is a compact, full element in $S$.
The simplex $K$ of normalized $2$-quasitraces on $A$ can be identified with the space of functionals $\lambda\colon S\to[0,\infty]$ that satisfy $\lambda(u)=1$;
see \autoref{pgr:qt}.
Given $a\in S$, we obtain $\rk(a)\in\LAff(K)$ defined by $\rk(a)(\lambda):=\lambda(a)$.
For $x\in(A\otimes\KK)_+$, we have $\rk(x)=\rk([x])$.
Thus, an affirmative answer to \autoref{qst:range} follows directly from one to the following:

\begin{qst}
\label{qst:CuRange}
Let $S$ be a countably based, simple, non-elementary, stably finite \CuSgp{} satisfying \axiomO{5} and \axiomO{6}, let $u\in S$ be a compact, full element, let $K$ denote the simplex of functionals on $S$ that are normalized at $u$, and let $f\in\LAff(K)_{++}$.
Does there exist $a\in S$ with $\rk(a)=f$?
\end{qst}

Let us outline our approach to answer \autoref{qst:CuRange}.
Consider the set $R:=\{\rk(a):a\in S, a\neq 0\}$ of ranks that are realized by nonzero elements in $S$.
We want to verify $R=\LAff(K)_{++}$.
For this, we use the basic order theoretic properties of $\LAff(K)$, which we study in \autoref{sec:LAff}.
We proceed in four steps:

First, we show that $S$ `realizes chisels':
Given $\lambda$ in the extreme boundary $\partial_e K$, we let $\sigma_\lambda\in\LAff(K)$ be the function that takes value $0$ at $\lambda$ and value $\infty$ elsewhere.
Given $t>0$, we call $t+\sigma_\lambda$ the \emph{chisel} at $\lambda$ with value $t$;
see \autoref{dfn:chisel}.
Under certain conditions on $S$, we have $t+\sigma_\lambda\in R$, for every $\lambda\in\partial_e K$ and $t>0$;
see \autoref{prp:realizeChisel}.
This step does not require stable rank one;
see \autoref{prp:realizeChiselCa}.

Second, we show that $S$ `realizes functional infima':
If $f,g\in R$, then $f\wedge g\in R$;
see \autoref{prp:fctlInf}.
It is at this step that the assumption of stable rank one is needed.
The Cuntz semigroups of \ca{s} with stable rank one satisfy a certain axiom \axiomO{6+} (explained below).
We use \axiomO{6+} to realize functional infima.
(In \cite{AntPerRobThi18arX:CuntzSR1} we improve this result by showing that Cuntz semigroups of separable \ca{s} with stable rank one admit general infima, that is, they are inf-semilattices.)

Third, we show that $S$ `realizes ranks approximately':
Under certain conditions on $S$, for all $g\in\Aff(K)_{++}$ and $\varepsilon>0$ there exists $f\in R$ with $g\leq f\leq g+\varepsilon$;
see \autoref{prp:realizeRankApprox}.
The basic idea to obtain this, is to approximate $g$ by infima of chisels.

Fourth, we show that $R$ is closed under passing to suprema of increasing sequences.
Combined with the approximate realization of ranks from step three, we obtain $R=\LAff(K)_{++}$;
see \autoref{prp:realizeRank}.

To summarize, our approach to realizing a given rank is to approximate it by infima of chisels.
This should be contrasted with the approach in \cite{DadTom10Ranks}, where the basic idea is to approximate a given rank by step functions, that is, by sums of scalar multiples of characteristic functions.
\\

The main novel techniques of this paper are two new properties for \CuSgp{s}.
In \autoref{dfn:Edwards}, and inspired by \cite{Edw69UniformApproxAff}, we introduce Edwards' condition, which roughly says that the functional infimum of two element $a$ and $b$ can be pointwise approximated by elements dominated by $a$ and $b$.
Using \AW-completions, we verify Edwards' condition for Cuntz semigroups of all unital \ca{s};
see \autoref{prp:EdwardsCa}.

In \autoref{dfn:O6+}, we introduce axiom \axiomO{6+}, a strengthened version of axiom \axiomO{6} of almost Riesz decomposition.
We show that axiom \axiomO{6+} is satisfied by Cuntz semigroups of \ca{s} with stable rank one (\autoref{prp:RieszSR1}), but not for all \ca{s} (\autoref{exa:notO6}).

In \autoref{sec:chisel}, we show that $S$ realizes chisels if it satisfies Edwards' condition.
This does not require the assumption of stable rank one, and in fact chisels are realized as the ranks of operators in all simple \ca{s};
see \autoref{prp:realizeChiselCa}.
In \autoref{sec:fctlInf}, we show that $S$ realizes functional infima if it satisfies \axiomO{6+}.

\section*{Acknowledgements}

I thank Nate Brown, Tim de Laat, Leonel Robert, Stuart White and Wilhelm Winter for valuable comments.
I am grateful to Eusebio Gardella for his feedback on the first draft of this paper.
Further, I want to thank the anonymous referee for many helpful suggestions.

\section{Preliminaries}
\label{sec:prelim}

\subsection{The category \texorpdfstring{$\CatCu$}{Cu} of abstract Cuntz semigroups}
\label{sec:Cu}

\begin{pgr}
\label{pgr:interpolation}
Let $S$ be a partially ordered set.
Recall that $S$ is said to satisfy \emph{Riesz interpolation} if for all $x,y,a,b\in S$ with $x,y\leq a,b$, there exists $z\in S$ such that $x,y\leq z\leq a,b$.
If for all $a,b\in S$ the infimum $a\wedge b=\sup\{x:x\leq a,b\}$ exists, then $S$ is called an \emph{inf-semilattice}.
A subset $D\subseteq S$ is said to be \emph{upward directed} (\emph{downward directed}) if for every $a,b\in D$ there exists $c\in D$ with $a,b\leq c$ ($c\leq a,b$).
Recall that $S$ is said to be \emph{directed complete}, or a \emph{dcpo} for short, if every directed subset $D$ of $S$ has a supremum $\sup D$;
see \cite[Definition~O-2.1, p.9]{GieHof+03Domains}.

If $S$ is an inf-semilattice then $S$ satisfies Riesz interpolation.
Indeed, given $x,y\leq a,b$, we have $x,y\leq a\wedge b\leq a,b$.
The converse holds if $S$ is directed complete:
Given $a,b\in S$, it follows from Riesz interpolation that the set $\{x:x\leq a,b\}$ is directed, whence directed completeness implies that $a\wedge b$ exists.

Given a dcpo $S$ and $a,b\in S$, recall that $a$ is said to be \emph{way-below} $b$, or that $a$ is \emph{compactly contained} in $b$, denoted $a\ll b$, if for every increasing net $(b_j)_j$ in $S$ with $b\leq\sup_j b_j$ there exists $j$ such that $a\leq b_j$;
see \cite[Definition~I-1.1, p.49]{GieHof+03Domains}.
A \emph{domain} is a dcpo $S$ such that for every $a\in S$ the set $\{x\in S :x\ll a\}$ is upward directed and has supremum $a$;
see \cite[Definition~I.1.6, p.54]{GieHof+03Domains}.

In the context of a \CuSgp{s}, the symbol `$\ll$' is used to denote the sequential way-below relation;
see \autoref{pgr:Cu}.
In general, the way-below relation in a dcpo is stronger than its sequential version, and it may be strictly stronger.
Nevertheless, it will always be clear from context to which relation the symbol `$\ll$' refers.
Moreover, we will now see that under suitable `separability' assumptions, the two notions agree.

Let $S$ be a domain.
A subset $B\subseteq S$ is called a \emph{basis} if for every $a',a\in S$ with $a'\ll a$ there exists $b\in B$ with $a'\leq b\leq a$.
Equivalently, every element in $S$ is the supremum of a directed net in $B$;
see \cite[Definition~III-4.1, Proposition~III-4.2, p.240f]{GieHof+03Domains}.
In particular, $S$ is said to be \emph{countably based} if it contains a countable basis.
In this case, the way-below relation agrees with its sequential version and every element in $S$ is the supremum of a $\ll$-increasing sequence.
\end{pgr}

\begin{pgr}
\label{pgr:pom}
A \emph{partially ordered monoid} is a commutative monoid $M$ together with a partial order $\leq$ such that $a\leq b$ implies $a+c\leq b+c$, for all $a,b,c\in M$.
The partial order on $M$ is said to be \emph{positive} if we have $0\leq a$ for every $a\in M$.
The partial order is said to be \emph{algebraic} if for all $a,b\in M$ we have $a\leq b$ if and only if there exists $c\in M$ with $a+c=b$.
Accordingly, we speak of \emph{positively ordered monoids} and \emph{algebraically ordered monoids}.

A \emph{cone} is a commutative monoid $C$ together with a scalar multiplication by $(0,\infty)$, that is, with a map $(0,\infty)\times C\to C$, denoted $(t,\lambda)\mapsto t\lambda$, that is additive in each variable, and such that $(st)\lambda=s(t\lambda)$ and $1\lambda=\lambda$, for all $s,t\in(0,\infty)$ and $\lambda\in C$.
We do not define scalar multiplication with $0$ in cones.
An \emph{ordered cone} is a cone together with a partial order $\leq$ such that $\lambda_1\leq\lambda_2$ implies $\lambda_1+\mu\leq\lambda_2+\mu$ and $t\lambda_1\leq t\lambda_2$, for all $\lambda_1,\lambda_2,\mu\in C$ and $t\in(0,\infty)$.
If the partial order is positive (algebraic), then we speak of a \emph{positively (algebraically) ordered cone}.

Let $M$ be a partially ordered monoid.
Then $M$ is said to satisfy \emph{Riesz decomposition} if for all $a,b,c\in M$ with $a\leq b+c$ there exist $a_1,a_2\in M$ such that $a=a_1+a_2$, $a_1\leq b$ and $a_2\leq c$.
Further, $M$ is said to satisfy \emph{Riesz refinement} if for all $a_1,a_2,b_1,b_2\in M$ with $a_1+a_2=b_1+b_2$ there exist $x_{i,j}\in M$, for $i,j=1,2$, such that $a_i=x_{i,1}+x_{i,2}$ for $i=1,2$, and $b_j=x_{1,j}+x_{2,j}$ for $j=1,2$.
Among cancellative and algebraically ordered monoids, the properties Riesz decomposition, Riesz refinement, and Riesz interpolation are equivalent.

Further, $M$ is said to be \emph{inf-semilattice-ordered} if $M$ is an inf-semilattice where addition is distributive over $\wedge$, that is,
\begin{align*}
a+(b\wedge c)=(a+b)\wedge(a+c),
\end{align*}
for all $a,b,c\in M$.
\end{pgr}

\begin{pgr}
\label{pgr:Cu}
In \cite{CowEllIva08CuInv}, Coward, Elliott and Ivanescu introduced the category $\CatCu$ of abstract Cuntz semigroups.
Recall that for two elements $a$ and $b$ in a partially ordered set $S$, one says that $a$ is \emph{way-below} $b$, or that $a$ is \emph{compactly contained in} $b$, denoted $a\ll b$, if for every increasing sequence $(b_n)_n$ in $S$ for which $\sup_n b_n$ exists and satisfies $b\leq\sup_n b_n$, there exists $N$ such that $a\leq b_N$.
This is a sequential version of the usual way-below relation used in lattice theory;
see \autoref{pgr:interpolation}.

An \emph{abstract Cuntz semigroup}, also called a \emph{\CuSgp}, is a \pom{} $S$ (see \autoref{pgr:pom}) satisfying the following axioms:
\begin{enumerate}
\item[\axiomO{1}]
Every increasing sequence in $S$ has a supremum.
\item[\axiomO{2}]
Every element in $S$ is the supremum of a $\ll$-increasing sequence.
\item[\axiomO{3}]
Given $a',a,b',b\in S$ with $a'\ll a$ and $b'\ll b$, we have $a'+b'\ll a+b$.
\item[\axiomO{4}]
Given increasing sequences $(a_n)_n$ and $(b_n)_n$ in $S$, we have $\sup_n(a_n+b_n)=\sup_n a_n + \sup_nb_n$.
\end{enumerate}
Given \CuSgp{s} $S$ and $T$, a map $\varphi\colon S\to T$ is a \emph{\CuMor} if it preserves addition, order, the zero element, the way below relation, and suprema of increasing sequences.
If $\varphi$ is not required to preserve the way-below relation then it is called a \emph{generalized \CuMor}.

We often use the following additional axioms:
\begin{enumerate}
\item[\axiomO{5}]
Given $a',a,b',b,c\in S$ satisfying $a+b\leq c$, $a'\ll a$ and $b'\ll b$, there exists $x\in S$ (the `almost complement') such that $a'+x\leq c\leq a+x$ and $b'\ll x$.
\item[\axiomO{6}]
Given $a',a,b,c\in S$ satisfying $a'\ll a\leq b+c$, there exist $e,f\in S$ such that $a'\leq e+f$, $e\leq a,b$ and $f\leq a,c$.
\end{enumerate}
Axiom \axiomO{5} means that $S$ has `almost algebraic order', and axiom \axiomO{6} means that $S$ has `almost Riesz decomposition'.

Recall that a \CuSgp{} $S$ is said to have \emph{weak cancellation}, or to be \emph{weakly cancellative}, if for all $a,b,x\in S$, if $a+x\ll b+x$ then $a\ll b$.
This is equivalent to requiring that for all $a,b,x\in S$, if $a+x\ll b+x$ then $a\leq b$.
It is also equivalent to requiring that for all $a,b,x',x\in S$ satisfying $a+x\leq b+x'$ and $x'\ll x$ we have $a\leq b$.
We refer to \cite[Section~4]{AntPerThi18:TensorProdCu} for details.

A \CuSgp{} $S$ is said to be \emph{countably based} if there exists a countable subset $B\subseteq S$ such that every element in $S$ is the supremum of an increasing sequence with elements in $B$.
For a discussion and proof of the following basic result we refer to \cite[Remarks~3.1.3, p.21f]{AntPerThi18:TensorProdCu}.
\end{pgr}

\begin{prp}
\label{prp:ctblBasedDCPO}
Let $S$ be a countably based \CuSgp.
Then $S$ a domain (see \autoref{pgr:interpolation}) where the way-below relation agrees with its sequential version.
In particular, every upward directed subset $D\subseteq S$ has a supremum.
\end{prp}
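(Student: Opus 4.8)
The plan is to prove that a countably based \CuSgp{} $S$ is a domain in which the sequential way-below relation (used in \autoref{pgr:Cu}) agrees with the lattice-theoretic way-below relation of \autoref{pgr:interpolation}, and then to conclude directed completeness. Throughout, I write $\ll_{\mathrm{seq}}$ for the sequential relation of \axiomO{1}--\axiomO{2} and $\ll_{\mathrm{lat}}$ for the net-based relation; the goal includes showing $\ll_{\mathrm{seq}}\,=\,\ll_{\mathrm{lat}}$. Let $B\subseteq S$ be a countable basis witnessing that $S$ is countably based, so every element is a supremum of an increasing sequence from $B$. Note that $\ll_{\mathrm{lat}}$ always implies $\ll_{\mathrm{seq}}$ trivially, since increasing sequences are in particular increasing nets; the content is the reverse implication, and this is where countability of $B$ is essential.

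First I would establish the key interpolation-type fact: if $a'\ll_{\mathrm{seq}} a$ then there exists $b\in B$ with $a'\ll_{\mathrm{seq}} b\ll_{\mathrm{seq}} a$. To see this, write $a=\sup_n a_n$ with $(a_n)_n$ a $\ll_{\mathrm{seq}}$-increasing sequence from the basis $B$ (using the definition of countably based together with \axiomO{2}); since $a'\ll_{\mathrm{seq}} a=\sup_n a_n$, there is $N$ with $a'\le a_N$, and choosing $b:=a_{N+1}\in B$ gives $a'\le a_N\ll_{\mathrm{seq}} a_{N+1}=b\ll_{\mathrm{seq}} a$. This shows simultaneously that $B$ is a basis in the sense of \autoref{pgr:interpolation} (relative to $\ll_{\mathrm{seq}}$) and that for every $a$ the set $\{x\in S:x\ll_{\mathrm{seq}} a\}$ has $a$ as its supremum. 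Upward directedness of this set of way-below elements follows from \axiomO{1}--\axiomO{3}: given $x',y'\ll_{\mathrm{seq}} a$, interpolate each below basis elements and use that the sum/supremum operations respect $\ll_{\mathrm{seq}}$; more carefully, one takes a single $\ll_{\mathrm{seq}}$-increasing sequence $(a_n)_n$ with supremum $a$, finds a common index above both $x'$ and $y'$, and uses $a_n$ as the common upper bound lying way-below $a$.

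Next I would upgrade this to full directed completeness and to the agreement of the two way-below relations. For an arbitrary upward directed subset $D\subseteq S$, the idea is to use the countable basis to extract a cofinal increasing sequence: the set $B_D:=\{b\in B: b\ll_{\mathrm{seq}} d \text{ for some } d\in D\}$ is countable, and by directedness of $D$ one can enumerate $B_D$ and inductively build an increasing sequence $(d_n)_n$ in $D$ that dominates larger and larger finite subsets of $B_D$, so that $\sup_n d_n$ exists by \axiomO{1} and equals $\sup D$ (because every $b\in B_D$, hence every element of $D$ via its basis approximation from the first step, lies below some $d_n$). This yields the final sentence that every upward directed subset has a supremum. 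Finally, to prove $\ll_{\mathrm{seq}}\,=\,\ll_{\mathrm{lat}}$, suppose $a'\ll_{\mathrm{seq}} a$ and let $(b_j)_j$ be any increasing net with $a\le\sup_j b_j$; replacing this net by the cofinal increasing sequence just constructed reduces the condition to the sequential case, so $a'\le b_j$ for some $j$, giving $a'\ll_{\mathrm{lat}} a$.

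The main obstacle is the directed-to-sequential reduction in the third paragraph: turning an arbitrary (possibly uncountable) upward directed set into a single increasing sequence with the same supremum. This is exactly the place where countability of the basis does the work, and the construction of the cofinal sequence must be done with care so that the inductively chosen $d_n$ genuinely dominate the relevant basis elements while staying inside $D$; the verification that $\sup_n d_n=\sup D$ then hinges on the first paragraph's interpolation fact applied to each element of $D$. I expect the remaining steps (additivity and order compatibility of the relations) to be routine consequences of \axiomO{3} and \axiomO{4}.
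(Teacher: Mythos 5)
Your proposal is correct and takes essentially the approach the paper relies on: the paper itself gives no proof, deferring to \cite[Remarks~3.1.3]{AntPerThi18:TensorProdCu}, where the same strategy appears — interpolating basis elements and, for an arbitrary upward directed set $D$, using the countable basis to extract an increasing sequence in $D$ whose supremum is $\sup D$, which simultaneously yields directed completeness and the agreement of the two way-below relations. The one compressed step — that every element is the supremum of a $\ll$-increasing sequence of basis elements — is indeed routine: interpolate a basis element between consecutive terms of an \axiomO{2}-sequence and pass to a subsequence, exactly as you indicate.
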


\begin{pgr}
\label{pgr:CuFurther}
Let $S$ be a \CuSgp.
A \emph{sub-\CuSgp} of $S$ is a submonoid $T\subseteq S$ such that:
$T$ is closed under passing to suprema of increasing sequences;
$T$ is a \CuSgp{} for the inherited partial order;
and the inclusion map $T\to S$ is a \CuMor.
Given a sub-\CuSgp{} $T\subseteq S$, the way-below relation in $T$ agrees with the restriction of the way-below relation in $S$ to $T$.

An \emph{ideal} in $S$ is a submonoid $J\subseteq S$ that is closed under passing to suprema of increasing sequences and that is downward hereditary, that is, if $a\in S$ and $b\in J$ satisfy $a\leq b$, then $a\in J$.
Every ideal is a sub-\CuSgp, and a sub-\CuSgp{} is an ideal if and only if it is downward hereditary.
We call $S$ \emph{simple} if it only contains the ideals $\{0\}$ and $S$.
Note that $S$ is simple if and only if for every nonzero $a\in S$ the element $\infty a :=\sup_n na$ is the largest element of $S$.

An element $a\in S$ is said to be \emph{finite} if $a\neq a+b$ for every $b\neq 0$.
Further, $S$ is \emph{stably finite} if $a\in S$ is finite whenever there exists $\tilde{a}\in S$ with $a\ll\tilde{a}$;
see \cite[Paragraph~5.2.2]{AntPerThi18:TensorProdCu}.
Every weakly cancellative, simple \CuSgp{} is stably finite.

A \emph{functional} on $S$ is a generalized \CuMor{} $S\to[0,\infty]$.
We use $F(S)$ to denote the set of functionals on $S$.
Equipped with pointwise addition and order, $F(S)$ has the structure of a \pom.
A scalar multiple of a functional is again a functional, which gives $F(S)$ the structure of a positively ordered cone.
If $S$ satisfies \axiomO{5} and \axiomO{6}, then $F(S)$ is an algebraically inf-semilattice-ordered cone;
see Proposition~2.2.3 and Theorem~4.1.2 in \cite{Rob13Cone}.

It was shown in \cite[Theorem~4.8]{EllRobSan11Cone} that $F(S)$ has a natural compact, Hausdorff topology such that a net $(\lambda_j)_j$ in $F(S)$ converges to $\lambda\in F(S)$ if and only if for all $a',a\in S$ with $a'\ll a$ we have
\[
\limsup_j\lambda_j(a')
\leq \lambda(a)
\leq \liminf_j\lambda_j(a).
\]

Given a \pom{} $M$, we let $M^*$ denote the order-preserving monoid morphisms $M\to[0,\infty]$.
Equipped with pointwise order and addition and the obvious scalar multiplication, $M^*$ has the structure of a positively ordered cone.
If $C$ is a positively ordered cone, then every $f\in C^*$ is automatically \emph{homogeneous} (that is $f(t\lambda)=tf(\lambda)$ for $t\in(0,\infty)$ and $\lambda\in C$).
We also say that $f$ is a \emph{linear functional} on $C$.
We set
\[
F(S)^*_\txtLsc := \big\{ f\in F(S)^* : f \text{ lower semicontinuous} \big\}.
\]
Given $a\in S$, we let $\widehat{a}\colon F(S)\to[0,\infty]$ be given by $\widehat{a}(\lambda):=\lambda(a)$, for $\lambda\in F(S)$.
Then $\widehat{a}$ belongs to $F(S)^*_\txtLsc$.
We call $\widehat{a}$ the \emph{rank} of $a$.
If $a\ll a$, then $\widehat{a}$ is continuous.
Given $u\in S$, set
\[
F_{u\mapsto 1}(S) := \big\{ \lambda\in F(S) : \lambda(u)=1 \big\}.
\]
Then $F_{u\mapsto 1}(S)$ is a convex subset of $F(S)$.
Moreover, $F_{u\mapsto 1}(S)$ is closed if and only if $\widehat{u}$ is continuous (for example, if $u$ is compact, that is, $u\ll u$).
We refer to \cite[Theorem~5.2.6, p.50]{AntPerThi18:TensorProdCu} for a characterization of when $F_{u\mapsto 1}(S)$ is nonempty.
If $S$ is simple, stably finite, satisfies \axiomO{5} and \axiomO{6} and $u$ is nonzero and compact, then $F_{u\mapsto 1}(S)$ is a Choquet simplex;
see \autoref{prp:FSChoquet}.
\end{pgr}

\begin{pgr}
\label{pgr:soft}
Let $S$ be a \CuSgp, and let $a,b\in S$.
The element $a$ is said to be \emph{compact} if $a\ll a$.
We use $S_c$ to denote the set of compact elements in $S$.
The element $a$ is said to be \emph{stably below} $b$, denoted $a<_sb$, if there exists $n\in\NN$ such that $(n+1)a\leq nb$.
Further, $a$ is said to be \emph{soft} if $a'<_sa$ for every $a'\in S$ satisfying $a'\ll a$;
see \cite[Definition~5.3.1]{AntPerThi18:TensorProdCu}.
We use $S_\txtSoft$ to denote the set of soft elements in $S$.
Set $S_\txtSoft^\times:=S_\txtSoft\setminus\{0\}$.

By \cite[Theorem~5.3.11]{AntPerThi18:TensorProdCu}, $S_\txtSoft$ is a submonoid of $S$ that is closed under passing to suprema of increasing sequences.
Moreover, $S_\txtSoft$ is absorbing in the sense that $a+b$ is soft as soon as $a$ is soft and $b\leq\infty a$.
In particular, if $S$ is simple, then $a+b$ is soft whenever $a$ is soft and nonzero.

A simple \CuSgp{} is said to be \emph{elementary} if it contains a minimal nonzero element.
The typical elementary \CuSgp{} is $\NNbar:=\{0,1,2,\ldots,\infty\}$.
\end{pgr}

Next, we summarize basic results about the structure of simple \CuSgp{s}.
Part~(1) is \cite[Proposition~5.3.16]{AntPerThi18:TensorProdCu}, parts~(2) and~(3) follow from (the proof of) \cite[Proposition~5.3.18]{AntPerThi18:TensorProdCu}, and part~(4) is \cite[Proposition~5.2.1]{Rob13Cone}.

\begin{prp}
\label{prp:structureSimpleCu}
Let $S$ be a simple, non-elementary, stably finite \CuSgp{} satisfying \axiomO{5} and \axiomO{6}.
Then:
\begin{enumerate}
\item
Every nonzero element in $S$ is either soft or compact.
Thus, $S = S_c \sqcup S_\txtSoft^\times$.
\item
$S_\txtSoft$ is a sub-\CuSgp{} of $S$ that is itself a simple, stably finite \CuSgp{} satisfying \axiomO{5} and \axiomO{6}.
\item
For every nonzero $a\in S$ there exists $x\in S_\txtSoft$ with $0\neq x\leq a$.
\item
For every nonzero $a\in S$ and $n\in\NN$ there exists $x\in S$ with $0\neq nx \leq a$.
\end{enumerate}
\end{prp}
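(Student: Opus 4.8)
The plan is to prove the four statements in the order (1), (3), (4), (2), since the soft/compact dichotomy and the existence of soft lower bounds feed into weak divisibility and into the analysis of $S_\txtSoft$. Throughout I would lean on three features: the almost-complement axiom \axiomO{5}; simplicity in the form recorded in \autoref{pgr:CuFurther}, namely that $\infty x=\sup_n nx$ is the largest element of $S$ for every nonzero $x$; and stable finiteness to exclude degenerate coincidences.

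For part (1), let $a$ be nonzero and not compact. Writing $a=\sup_k a_k$ with $a_k\ll a_{k+1}$ via \axiomO{2}, non-compactness gives $a\not\leq a_k$ for all $k$. Given $a'\ll a$, choose $a'\ll a''\ll a$ and apply \axiomO{5} to $a''\leq a$ (with zero second summand) to obtain an almost-complement $x$ with $a'+x\leq a\leq a''+x$; here $x\neq 0$, for otherwise $a\leq a''\ll a$ would make $a$ compact. Simplicity gives $a\leq\infty x=\sup_j jx$, and since $a'\ll a$ the way-below relation yields $a'\leq j_0x$ for some $j_0$. Multiplying $a'+x\leq a$ by $j_0$ and inserting $a'\leq j_0x$ then gives $(j_0+1)a'\leq j_0a$, that is, $a'$ is stably below $a$. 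As $a'\ll a$ was arbitrary, $a$ is soft. Disjointness of $S_c$ and $S_\txtSoft^\times$ follows from stable finiteness: a nonzero compact soft $a$ would satisfy $(n+1)a\leq na$ for some $n$, forcing $na=na+a$ with $na$ finite, hence $a=0$.

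Parts (3) and (4) are where non-elementarity is indispensable. For (3), given nonzero $a$ I would construct a strictly increasing sequence $x_1\ll x_2\ll\cdots$ with all $x_k\leq a$; non-elementarity is precisely what prevents such a construction from stabilizing. Its supremum $x\leq a$ is nonzero and cannot be compact---if $x\ll x$ the sequence would stabilize---so $x$ is soft by part (1). For (4), I would first reduce to $n=2$ by iterating ($2^k$-divisibility yields $n$-divisibility for $n\leq 2^k$), and then split a nonzero element as $b+c\leq a$ with $b,c$ nonzero, using the almost-complement from the proof of part (1) in the soft case and reducing the compact case to part (3). The remaining step---replacing the unequal summands $b,c$ by a single $x$ with $2x\leq a$---I would handle by passing, via simplicity and \axiomO{6}, to a common small piece dominated by both.

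For part (2), that $S_\txtSoft$ is a submonoid closed under suprema of increasing sequences is already recorded in \autoref{pgr:soft}; since the way-below relation in a sub-\CuSgp{} is the restriction of that in $S$, it follows that $S_\txtSoft$ is a \CuSgp{} with the inclusion a \CuMor{}. Stable finiteness is inherited verbatim, and simplicity of $S_\txtSoft$ follows from part (3) together with the absorption property: for nonzero soft $a$ the top element $\infty a$ of $S$ is itself soft, hence is the largest element of $S_\txtSoft$. The hard part will be showing that \axiomO{5} and \axiomO{6} descend to $S_\txtSoft$: the almost-complements and Riesz decompositions produced in $S$ need not have soft entries, so one cannot merely restrict. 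I expect this to be the main obstacle, to be overcome by perturbing each produced summand---enlarging it to a soft element below the same bound, or absorbing a small nonzero soft element using the absorption property of \autoref{pgr:soft}---while exploiting the slack in the way-below witness $a'\ll a$ so that the defining inequalities survive. Together with the equal-copies extraction in part (4), this softness-preserving bookkeeping is where the genuine work lies, parts (1) and (3) being comparatively formal consequences of \axiomO{5}, simplicity and stable finiteness.
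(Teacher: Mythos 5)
Be aware that the paper offers no argument for this proposition at all: part~(1) is quoted from \cite[Proposition~5.3.16]{AntPerThi18:TensorProdCu}, parts~(2) and~(3) from (the proof of) \cite[Proposition~5.3.18]{AntPerThi18:TensorProdCu}, and part~(4) is Robert's Glimm halving, \cite[Proposition~5.2.1]{Rob13Cone}; so your reconstruction is measured against those proofs. Your part~(1) is complete and is essentially the published argument: the \axiomO{5}-complement $x$ of an interpolant $a''\ll a$ is nonzero unless $a$ is compact, simplicity gives $a'\leq j_0x$, hence $(j_0+1)a'\leq j_0a$, and the disjointness of $S_c$ and $S_\txtSoft^\times$ via stable finiteness is also correct. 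For part~(2) your perturbation strategy (absorb a small nonzero soft element, using the slack in $a'\ll a$) is indeed how \cite[Proposition~5.3.18]{AntPerThi18:TensorProdCu} proceeds; note however that \autoref{pgr:soft} records only that $S_\txtSoft$ is a submonoid closed under suprema of increasing sequences, so even the sub-\CuSgp{} property itself --- \axiomO{2} relative to $S_\txtSoft$, i.e.\ approximating a soft element from below by \emph{soft} elements compactly contained in it --- is part of the work you defer, not something already recorded.

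The genuine gap is part~(3), and it infects your part~(4). Non-elementarity says no nonzero element is minimal; this manufactures strictly \emph{decreasing} chains, and it does not prevent your increasing chain from stabilizing: the extension step can be forced to jump to $a$ itself, after which no strict extension below a compact $a$ exists. (In $\Cu(\mathcal{Z})\cong\NN\sqcup(0,\infty]$, the only element strictly above the soft $1$ and below the compact $1$ is the compact $1$.) What guarantees a chain with room to spare is additive splitting --- exactly the Glimm-halving content of~(4) --- which is why the literature proves~(4) first, directly, and derives~(3) from it: choose nonzero $z_n$ with $2z_{n+1}\leq z_n$ and $z_0\leq a$, so that $\sum_{n=1}^N z_n\leq z_0\leq a$ telescopically, and set $x:=\sup_N\sum_{n=1}^N z_n$; if $x$ were compact then $\sum_{n\leq N}z_n+z_{N+1}\leq\sum_{n\leq N}z_n$ for some $N$, which by iteration and simplicity forces $\infty\leq a$, contradicting stable finiteness when $a$ is compact (and the soft case of~(3) is trivial). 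Since your compact case of~(4) routes through~(3), your plan as written is circular at this node. The repair is cheap and implicit in your own text: your \axiomO{5}-splitting does not need softness --- for any nonzero $a$, non-elementarity yields nonzero $b\lneq a$, and the \axiomO{5}-complement $x$ of $b$ in $a$ (with nonzero $b'\ll b$) is nonzero, since $x=0$ would force $a\leq b$; your \axiomO{6}-plus-simplicity common-lower-bound step is sound (interpolate $b'\ll b''\ll b$, use simplicity to get $b''\leq nc$, and iterate \axiomO{6} along $c+(n-1)c$), giving nonzero $z\leq b',x$ and hence $2z\leq a$, and iterating gives~(4). So reorder to~(1),~(4),~(3),~(2), matching the logical structure of Robert's and Antoine--Perera--Thiel's proofs.
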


Let $S$ be a partially ordered semigroup.
Recall that $S$ is said to be \emph{unperforated} if for all $a,b\in S$ we have $a\leq b$ whenever $na\leq nb$ for some $n\geq 1$.
Further, $S$ is said to be \emph{almost unperforated} if for all $a,b\in S$ we have $a\leq b$ whenever $a<_s b$ (that is, whenever $(n+1)a\leq nb$ for some $n\geq 1$).

\begin{prp}
\label{prp:charAlmUnp}
Let $S$ be a simple, stably finite, non-elementary \CuSgp{} satisfying \axiomO{5} and \axiomO{6}.
Then the following are equivalent:
\begin{enumerate}
\item
$S$ is almost unperforated.
\item
$S_\txtSoft$ is almost unperforated.
\item
The map $\kappa\colon S_\txtSoft\to F(S)^*_\txtLsc$ is an order-embedding, that is, for all $a,b\in S_\txtSoft$ we have $a\leq b$ whenever $\widehat{a}\leq\widehat{b}$.
\end{enumerate}
\end{prp}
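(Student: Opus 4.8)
The plan is to run the cycle of implications $(1)\Rightarrow(2)\Rightarrow(3)\Rightarrow(1)$, in which only the last two steps carry real content. The implication $(1)\Rightarrow(2)$ is immediate: by \autoref{prp:structureSimpleCu}(2) the set $S_\txtSoft$ is a sub-$\CatCu$-semigroup, so addition, the order $\le$, and hence the relation $<_s$ on $S_\txtSoft$ are simply the restrictions of those on $S$. Thus if $a,b\in S_\txtSoft$ satisfy $a<_s b$, then $a\le b$ in $S$ by (1), and this inequality also holds in $S_\txtSoft$.

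For $(2)\Rightarrow(3)$, let $a,b\in S_\txtSoft$ with $\widehat a\le\widehat b$; I want $a\le b$. Writing $a=\sup_k a_k$ with $a_k\in S_\txtSoft$ and $a_k\ll a$ (possible since $S_\txtSoft$ is a sub-$\CatCu$-semigroup), it suffices to prove $a_k\le b$ for each $k$. Here the definition of softness is decisive: $a_k\ll a$ forces $a_k<_s a$, so $(m+1)a_k\le ma$ for some $m$, and applying functionals yields the \emph{uniform strict} estimate $\widehat{a_k}\le\tfrac{m}{m+1}\widehat a\le\tfrac{m}{m+1}\widehat b$. The key step is to convert this rank gap into the algebraic relation $a_k<_s b$; granting this, almost unperforation of $S_\txtSoft$ applied to the soft elements $a_k$ and $b$ gives $a_k\le b$, and passing to the supremum gives $a\le b$.

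The passage from the uniform gap $\widehat{a_k}\le c\,\widehat b$ (with $c<1$) to the relation $a_k<_s b$ is the main obstacle, and it is a comparison statement that does not itself use almost unperforation. I would establish it through a compactness argument on the space $F(S)$, which is compact Hausdorff in the topology of \autoref{pgr:CuFurther}: choosing $a_k\ll a_k'\ll a$, the function $\widehat{a_k}$ is controlled from above by $\widehat{a_k'}$ in an upper-semicontinuous fashion while $\widehat b$ is lower semicontinuous, so the strict gap, being uniform over the normalized functionals, should produce a single integer $n$ and, via \axiomO{5} and \axiomO{6}, the genuine inequality $(n+1)a_k\le nb$. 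Making this functional-to-algebraic upgrade precise and uniform over $F(S)$ is the technical heart of the argument.

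Finally, for $(3)\Rightarrow(1)$, let $a,b\in S$ with $a<_s b$, say $(n+1)a\le nb$ (the cases where $a$ or $b$ is zero being trivial); I want $a\le b$, for which it suffices to show $a'\le b$ for every $a'\ll a$. Applying functionals gives $\widehat{a'}\le\widehat a\le\tfrac{n}{n+1}\widehat b$, again a uniform gap. The idea is then to reduce to a comparison of two \emph{soft} elements so that the order-embedding (3) can be applied: using soft absorption (\autoref{pgr:soft}) I would choose a soft $s\ge a'$ whose rank $\widehat s$ exceeds $\widehat{a'}$ only slightly, and using the structure results of \autoref{prp:structureSimpleCu} I would choose a soft $t\le b$ whose rank $\widehat t$ lies within a small factor of $\widehat b$; the gap ensures these can be arranged so that $\widehat s\le\widehat t$. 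Then (3) yields $s\le t$, whence $a'\le s\le t\le b$. This softening step is where the dichotomy $S=S_c\sqcup S_\txtSoft^\times$ removes any need to treat compact elements separately.
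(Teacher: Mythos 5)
Your overall architecture coincides with the paper's: the implication (1)$\Rightarrow$(2) is the same observation, and your (3)$\Rightarrow$(1) --- sandwiching the given element below a slightly larger soft element and the dominating element above a soft minorant of almost-full rank, via \autoref{prp:structureSimpleCu}, soft absorption and \axiomO{5} --- is exactly the paper's construction (the elements $a+x$ and $y'+z$ there). The genuine gap is the step you yourself flag as ``the technical heart'' of (2)$\Rightarrow$(3): upgrading the uniform functional gap $\widehat{a_k}\leq\tfrac{m}{m+1}\widehat{b}$ to the algebraic relation $a_k<_s b$. This statement is true, but it is a theorem, not a compactness exercise on $F(S)$, and the argument you sketch does not go through as described: the proposition assumes no compact, full element, so there is no compact set of ``normalized functionals'' over which to run a uniformity argument; $F(S)$ itself is compact, but $\widehat{b}$ is merely lower semicontinuous and may take the value $\infty$, where the multiplicative gap yields no additive gap, and sets of the form $\{\lambda:\lambda(b)=1\}$ need not be closed. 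The known proof is algebraic at this point: by simplicity $a_k\leq kb$ for some $k$; by the Goodearl--Handelman/R{\o}rdam criterion (see \cite[Proposition~3.2]{Ror04StableRealRankZ}), $a_k<_s b$ holds once \emph{every} monoid state $d$ on $S$ normalized at $b$ satisfies $d(a_k)<1$ --- and such states need not be suprema-preserving, so they are not elements of $F(S)$; one reduces to $F(S)$ via the lower semicontinuous regularization $\widetilde{d}(x)=\sup_{x'\ll x}d(x')$, whose additivity rests on \axiomO{5} and \axiomO{6} (interpolating $a_k\ll a_k'\ll a$ then gives $d(a_k)\leq\widetilde{d}(a_k')\leq\tfrac{m}{m+1}\widetilde{d}(b)\leq\tfrac{m}{m+1}<1$). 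None of this machinery appears in your proposal. This is precisely why the paper does not reprove the implication but quotes it: for (2)$\Rightarrow$(3) it simply cites \cite[Theorem~5.3.12]{AntPerThi18:TensorProdCu}. Citing that theorem, or supplying the state-theoretic lemma just described, closes the gap.

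A secondary omission sits in your (3)$\Rightarrow$(1): producing the soft minorant $t$ with $\widehat{t}$ multiplicatively close to the rank of the dominating element requires cancelling functional values, which is legitimate only where these are finite. The paper first reduces to the case where the dominating element $d$ satisfies $d\ll\tilde{d}$ for some $\tilde{d}$ (it suffices to verify the comparison for compactly contained approximants), which by simplicity forces $\lambda(d)<\infty$ for every $\lambda\neq\lambda_\infty$, and it checks the functional $\lambda_\infty$ separately using that the minorant is nonzero. Your phrase ``the gap ensures these can be arranged'' glosses over exactly this point; without the reduction, the cancellation step fails at functionals taking the value $\infty$.
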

\begin{proof}
By \autoref{prp:structureSimpleCu}, $S_\txtSoft$ is a \CuSgp.
It is clear that~(1) implies that $S_\txtSoft$ is almost unperforated, and that~(3) implies that $S_\txtSoft$ is unperforated.
If $S_\txtSoft$ is almost unperforated, then~(3) follows from \cite[Theorem~5.3.12]{AntPerThi18:TensorProdCu}.
To show that~(3) implies~(1), assume that the order on $S_\txtSoft$ is determined by the functionals.
To show that $S$ is almost unperforated, let $a,d\in S$ and $n\in\NN$ satisfy $(n+1)a\leq nd$.
We need to verify $a\leq d$.
We may assume that there exists $\tilde{d}$ with $d\ll\tilde{d}$.
(For every $a'\ll a$ there exists $d'\ll d$ with $(n+1)a'\leq nd'$.
If for all such $a',d'$ we can deduce $a'\leq d'$, then we obtain $a\leq d$.)
We will repeatedly use the results from \autoref{prp:structureSimpleCu}.
We approximate $a$ and $d$ by suitable soft elements.

We have $(2n+2)a\leq (2n)d$.
Choose $x\in S_\txtSoft^\times$ with $(2n+1)x\leq a$.
Set $b:=a+x$.
Then $a\leq b$, and $b$ is soft since $S_\txtSoft^\times$ is absorbing;
see \autoref{pgr:soft}.
Then
\[
(2n+1)b = (2n+1)a + (2n+1)x \leq (2n+2)a.
\]
Choose $y',y\in S_\txtSoft^\times$ with $y'\ll y$ and $(2n+1)y\leq d$.
Apply \axiomO{5} for $y'\ll y \leq d$ to obtain $z\in S$ with $y'+z\leq d\leq y+z$.
Set $c:=y'+z$.
Then $c\leq d$ and $c$ is soft.
We have
\[
(2n+1)d\leq (2n+1)y+(2n+1)z\leq d+(2n+1)z \leq d+(2n+1)c.
\]
Let $\lambda_\infty\in F(S)$ be the largest functional, which satisfies $\lambda_\infty(s)=\infty$ for every nonzero $s\in S$.
Since there exists $\tilde{d}$ with $d\ll\tilde{d}$ and since $S$ is simple, we have $\lambda(d)<\infty$ for every $\lambda\in F(S)$ with $\lambda\neq\lambda_\infty$.
We deduce $(2n)\widehat{d}\leq(2n+1)\widehat{c}$.
Thus,
\[
(2n+1)\widehat{b}
\leq (2n+2)\widehat{a}
\leq 2n\widehat{d}
\leq(2n+1)\widehat{c}.
\]
By assumption, we obtain $b\leq c$, and hence $a\leq b\leq c\leq d$.
\end{proof}

\begin{prp}[{\cite[Lemma~5.4.3, Proposition~5.4.4]{AntPerThi18:TensorProdCu}}]
\label{prp:realizeRankBySoft}
Let $S$ be a countably based, simple, stably finite, non-elementary \CuSgp{} satisfying \axiomO{5} and \axiomO{6}.
Then for every $a\in S_c$ there exists $x\in S_\txtSoft$ with $x<a$ and $\widehat{x}=\widehat{a}$.

If $S$ is weakly cancellative or almost unperforated, then for each $a\in S$ the set $\{x\in S_\txtSoft : x\leq a\}$ contains a largest element.
Moreover, the map $\varrho\colon S\to S_\txtSoft$ given by $\varrho(a):=\max\{x\in S_\txtSoft : x\leq a\}$ is a generalized \CuMor.
\end{prp}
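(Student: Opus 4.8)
The plan is to treat the two assertions separately, with the softening of compact elements as the technical heart and the rest following by bookkeeping with functionals and suprema.

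\textbf{First assertion (softening a compact element).} Fix a nonzero $a\in S_c$, so that $a\ll a$ and $\widehat{a}$ is continuous; moreover, since $a\ll a$ and $S$ is simple, every functional other than the largest one $\lambda_\infty$ is finite on $a$. The idea is to exhaust the rank of $a$ from inside by soft elements. Using \autoref{prp:structureSimpleCu}(4) together with \autoref{prp:structureSimpleCu}(3), I first produce for each $n$ a nonzero soft element $s_n\le a$ with $\widehat{s_n}\le\tfrac1n\widehat{a}$, so that $\widehat{s_n}\to 0$ at every functional $\neq\lambda_\infty$. I then build an increasing sequence $(X_n)$ in $S_\txtSoft$ with $X_n\le a$ and $a\le X_n+s_n$. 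To pass from $X_n$ to $X_{n+1}$ I apply \axiomO{5} to an honest decomposition of $a$ obtained from a relation $(m{+}1)u\le a$ with $u$ a nonzero soft element (\autoref{prp:structureSimpleCu}(4)), extracting an almost-complement, and I keep the elements soft using the absorption property of $S_\txtSoft$ from \autoref{pgr:soft}. Setting $x:=\sup_n X_n$, the supremum is soft because $S_\txtSoft$ is closed under suprema of increasing sequences (\autoref{prp:structureSimpleCu}(2)), and $x\le a$. From $a\le X_n+s_n$ I get $\widehat{a}\le\widehat{x}+\widehat{s_n}$ at every functional; letting $n\to\infty$ gives $\widehat{x}=\widehat{a}$ away from $\lambda_\infty$, while at $\lambda_\infty$ both equal $\infty$ since $x,a\neq 0$. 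Finally $x\neq a$: if $x=a$ then compactness of $a$ would force $a\le X_N$ for some $N$, whence $a=X_N\in S_\txtSoft$, contradicting the disjoint decomposition $S=S_c\sqcup S_\txtSoft^\times$ of \autoref{prp:structureSimpleCu}(1). Thus $x<a$. The step I expect to be the main obstacle is exactly keeping the increments \emph{soft} while forcing $\widehat{s_n}\to 0$: \axiomO{5} only returns an element dominating a prescribed soft piece, which need not itself be soft, so producing genuinely soft almost-complements is where the work lies.

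\textbf{Second assertion (existence of the largest soft element).} For $a\in S_\txtSoft$ (in particular $a=0$) the set $\{x\in S_\txtSoft:x\le a\}$ has $a$ as its largest element. For nonzero $a\in S_c$, let $x_0<a$ be the soft element with $\widehat{x_0}=\widehat{a}$ from the first assertion; I claim it is the largest soft element below $a$. Any soft $x\le a$ satisfies $\widehat{x}\le\widehat{a}=\widehat{x_0}$. If $S$ is almost unperforated, then by \autoref{prp:charAlmUnp} the order on $S_\txtSoft$ is determined by functionals, so $\widehat{x}\le\widehat{x_0}$ gives $x\le x_0$ immediately. If $S$ is only weakly cancellative, the functional comparison no longer suffices; here I would argue directly, combining weak cancellation with \axiomO{5}, \axiomO{6} and the relation $x_0<a$, $\widehat{x_0}=\widehat{a}$ to produce soft upper bounds, and this is the delicate point of the weakly cancellative case. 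In either case $\varrho(a):=\max\{x\in S_\txtSoft:x\le a\}$ is well defined, and by construction $\widehat{\varrho(a)}=\widehat{a}$ for every $a\in S$.

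\textbf{That $\varrho$ is a generalized \CuMor.} Preservation of $0$ is clear, and $a\le b$ gives the inclusion $\{x\in S_\txtSoft:x\le a\}\subseteq\{x\in S_\txtSoft:x\le b\}$, hence $\varrho(a)\le\varrho(b)$. For additivity, $\varrho(a)+\varrho(b)$ is soft and bounded by $a+b$, so $\varrho(a)+\varrho(b)\le\varrho(a+b)$; for the reverse inequality I use $\widehat{\varrho(a+b)}=\widehat{a+b}=\widehat{\varrho(a)+\varrho(b)}$ together with the functional order-embedding of \autoref{prp:charAlmUnp} in the almost unperforated case, and an \axiomO{6}-decomposition of elements way-below $\varrho(a+b)$ into soft pieces dominated by $a$ and by $b$ in the weakly cancellative case. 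For suprema of an increasing sequence $(a_n)$ with supremum $a$, order-preservation gives $\sup_n\varrho(a_n)\le\varrho(a)$; conversely, since $S_\txtSoft$ is itself a \CuSgp{} by \autoref{prp:structureSimpleCu}(2) I may write $\varrho(a)$ as the supremum of a $\ll$-increasing sequence of \emph{soft} elements $x_k'$, and each $x_k'\ll\varrho(a)\le\sup_n a_n$ satisfies $x_k'\le a_n$ for some $n$, whence $x_k'\le\varrho(a_n)$ and therefore $\varrho(a)\le\sup_n\varrho(a_n)$. Thus $\varrho$ preserves suprema of increasing sequences, which completes the plan.
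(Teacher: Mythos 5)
Your bookkeeping is sound and close to the paper where the paper actually argues: the largest-soft-element claim in the almost unperforated case via the order-embedding of \autoref{prp:charAlmUnp}, monotonicity of $\varrho$, and your supremum argument (choose soft $x_k'\ll\varrho(a)\leq\sup_n a_n$, absorb $x_k'$ into some $a_n$, conclude $x_k'\leq\varrho(a_n)$) is exactly the Galois-connection mechanism the paper alludes to in \autoref{rmk:predecessor}. However, there are genuine gaps at precisely the two points where the paper does not argue but cites: the first assertion (existence of soft $x<a$ with $\widehat{x}=\widehat{a}$ for compact $a$) is only sketched, and you yourself flag the unresolved core difficulty --- \axiomO{5} returns almost-complements that need not be soft --- which is exactly the content of \cite[Lemma~5.4.3]{AntPerThi18:TensorProdCu} that the paper quotes wholesale; likewise the weakly cancellative case of the largest-element claim is left at ``I would argue directly,'' where the paper cites \cite[Proposition~5.4.4]{AntPerThi18:TensorProdCu}. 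Since the proposition is stated with that attribution, citing is legitimate; a self-contained proof would have to fill both holes, and your sketch does not.

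More seriously, your \axiomO{6}-route for additivity in the weakly cancellative case would fail as stated. Given $c'\ll c\leq\varrho(a+b)\leq a+b$, axiom \axiomO{6} produces $e\leq c,a$ and $f\leq c,b$ with $c'\leq e+f$, but $e$ and $f$ need not be soft: compact elements can well sit below the soft element $c$ (e.g.\ in $\Cu(\mathcal{Z})$), so $e\leq a$ does not yield $e\leq\varrho(a)$, and replacing $e$ by a soft predecessor destroys the bound $c'\leq e+f$. The paper's proof avoids \axiomO{6} entirely: using the dichotomy $S=S_c\sqcup S_\txtSoft^\times$ of \autoref{prp:structureSimpleCu}, the case of two soft summands is trivial; for $a\in S_c$ and $b\in S_\txtSoft^\times$ one has the absorption identity $a+b=\varrho(a)+b$ from \cite[Proposition~5.4.4]{AntPerThi18:TensorProdCu}; and for two compact summands one applies \axiomO{5} to $a\ll a\leq\varrho(a+b)$ to get an \emph{exact} complement $a+x=\varrho(a+b)$, observes that $x$ must be soft (otherwise $a+x$ would be compact, contradicting that $\varrho(a+b)$ is nonzero soft), and then uses weak cancellation (or, in the almost unperforated case, the functional embedding) on $a+x\leq a+b$ to get $x\leq\varrho(b)$, whence $\varrho(a+b)=a+x\leq\varrho(a)+\varrho(b)$. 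Your rank-comparison argument does correctly settle additivity in the almost unperforated case, but the weakly cancellative case needs the paper's mechanism, not \axiomO{6}.
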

\begin{proof}
Given $a\in S$ set $P_a:=\{x\in S_\txtSoft : a\leq x\}$.
By \cite[Lemma~5.4.3]{AntPerThi18:TensorProdCu}, there exists $x\in P_a$ with $\widehat{a}=\widehat{x}$.
If $S$ is almost unperforated, then such $x$ is the largest element in $P_a$, by \autoref{prp:charAlmUnp}.
If $S$ weakly cancellative, then it follows from \cite[Proposition~5.4.4]{AntPerThi18:TensorProdCu} that $P_a$ contains a largest element.
Hence, $\varrho$ is well-defined and order-preserving.
It is straightforward to verify that $\varrho$ preserves suprema of increasing sequences.
(This also follows as in the proof of \autoref{prp:alpha}, using that $\varrho$ is the upper adjoint of a Galois connection;
see \autoref{rmk:predecessor}.)
To show that $\varrho$ is additive, let $a,b\in S$.
By \autoref{prp:structureSimpleCu}, each of $a$ and $b$ is either soft or compact.
If $a$ and $b$ are soft, there is nothing to show.
If $a\in S_c$ and $b\in S_\txtSoft^\times$, then it follows from \cite[Proposition~5.4.4]{AntPerThi18:TensorProdCu} that $a+b=\varrho(a)+b$, and consequently
\[
\varrho(a+b)
= \varrho(\varrho(a)+b)
= \varrho(a)+b
= \varrho(a)+\varrho(b).
\]
Lastly, it is enough to consider the case that $a$ and $b$ are compact.
We have $\varrho(a)+\varrho(b)\leq\varrho(a+b)$.
Further,
\[
a+\varrho(b) \leq \varrho(a+b) \leq a+b.
\]
Applying \axiomO{5} for $a\ll a\leq \varrho(a+b)$, we obtain $x\in S$ such that $a+x=\varrho(a+b)$.
Then $x\in S_\txtSoft$ and $a+x\leq a+b$.
Using that $S$ is weakly cancellative or almost unperforated (and simple), we obtain $x\leq b$.
Then $x\leq\varrho(b)$ and thus
\[
\varrho(a+b)=a+x\leq a+\varrho(b)=\varrho(a)+\varrho(b),
\]
as desired.
\end{proof}

\begin{rmks}
\label{rmk:predecessor}
(1)
By \autoref{prp:realizeRankBySoft}, a function in $F(S)^*_\txtLsc$ is realized as $\widehat{a}$ for some $a\in S$ if and only if it is realized as the rank of some soft element.

(2)
Given $a\in S_c$ nonzero, the element $\varrho(a)$ is a `predecessor' of $a$.
We therefore call the map $\varrho\colon S\to S_\txtSoft$ from \autoref{prp:realizeRankBySoft} the \emph{predecessor map}.
Let $\iota\colon S_\txtSoft\to S$ denote the inclusion map.
Given $a\in S_\txtSoft$ and $b\in S$, we have $a\leq\varrho(b)$ if and only if $\iota(a)\leq b$.
Thus, $\varrho$ and $\iota$ form a (order theoretic) Galois connection between $S$ and $S_\txtSoft$;
see \cite[Definition~O-3.1, p.22]{GieHof+03Domains}.
In \autoref{pgr:GaloisAlpha} we show that there is also a Galois connection between $\LAff(K)_{++}$ and $S_\txtSoft^\times$.
\end{rmks}

Let $S$ be a partially ordered semigroup.
Recall that $S$ is said to be \emph{divisible} if for all $a\in S$ and $k\geq 1$ there exists $x\in S$ such that $kx=a$.
If $S$ is a \CuSgp, then is said to be \emph{almost divisible} if for all $a',a\in S$ with $a'\ll a$ and all $k\in\NN$ there exists $x\in S$ such that $kx\leq a$ and $a'\leq(k+1)x$;
see \cite[Definition~7.3.4]{AntPerThi18:TensorProdCu}.

Given a compact, convex set $K$, we use $\LAff(K)_{++}$ to denote the strictly positive, lower semicontinuous, affine functions $K\to[0,\infty]$;
see \autoref{pgr:LAff}.
In the following result, $K$ is a Choquet simplex by \autoref{prp:FSChoquet}.

\begin{prp}
\label{prp:charAlmDiv}
Let $S$ be a countably based, simple, stably finite, non-elementary \CuSgp{} satisfying \axiomO{5} and \axiomO{6}, let $u\in S$ be a compact, full element, and set $K:=F_{u\mapsto 1}(S)$.
Assume that $S$ is almost unperforated.
Then the following are equivalent:
\begin{enumerate}
\item
$S$ is almost divisible.
\item
$S_\txtSoft$ is (almost) divisible.
\item
The map $\kappa\colon S_\txtSoft\to F(S)^*_\txtLsc$ is an order-isomorphism.
\item
For every $f\in\LAff(K)_{++}$, there exists $a\in S_\txtSoft$ with $\widehat{a}_{|K}=f$.
\item
For every $g\in\Aff(K)_{++}$ and $\varepsilon>0$ there exists $a\in S$ with $g\leq\widehat{a}_{|K}\leq g+\varepsilon$.
\end{enumerate}
\end{prp}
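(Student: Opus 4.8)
The plan is to prove the equivalence of (1)–(5) by a cycle of implications, exploiting the predecessor map $\varrho\colon S\to S_\txtSoft$ and the order-embedding $\kappa\colon S_\txtSoft\to F(S)^*_\txtLsc$ already established under the standing almost-unperforation hypothesis (\autoref{prp:charAlmUnp}). The conceptual core is that $\kappa$ is always an order-\emph{embedding}; the various divisibility conditions are precisely what is needed to make its image large enough. I would organize the argument as $(1)\Rightarrow(2)\Rightarrow(3)\Rightarrow(4)\Rightarrow(5)\Rightarrow(1)$, with $(3)\Leftrightarrow(4)$ being essentially a translation between the two descriptions of the target cone.

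\textbf{The easier implications.} For $(1)\Rightarrow(2)$, given $a\in S_\txtSoft$ and $k\geq 1$, I would write $a=\sup_n a_n$ with $a_n\ll a_{n+1}\ll a$ and apply almost divisibility to each $a_n'\ll a_n$ to produce $x_n$ with $kx_n\leq a_n$ and $a_n'\leq (k+1)x_n$; passing to a suitable increasing selection and taking suprema yields $x:=\sup_n x_n$ with $kx=a$, after replacing $x_n$ by its soft predecessor $\varrho(x_n)$ so that the supremum lands in $S_\txtSoft$ (using that $S_\txtSoft$ is closed under increasing suprema, \autoref{pgr:soft}). For $(2)\Rightarrow(3)$, by \autoref{prp:charAlmUnp} the map $\kappa$ is already an order-embedding, so I only need surjectivity: divisibility of $S_\txtSoft$ together with almost unperforation should make $\widehat{S_\txtSoft}$ dense and then closed under the relevant suprema, forcing it to exhaust $F(S)^*_\txtLsc$. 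The step $(5)\Rightarrow(1)$ is a standard unwinding: given $a'\ll a$ and $k$, one applies the approximate-realization hypothesis to $g:=\tfrac{1}{k}\widehat{a}$ (made continuous by passing to a compactly contained approximant) with small $\varepsilon$, and uses almost unperforation to convert the resulting functional inequalities back into the order relations $kx\leq a$, $a'\leq(k+1)x$.

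\textbf{The bridge to functions on $K$.} The equivalence $(3)\Leftrightarrow(4)$ and the passage $(4)\Rightarrow(5)$ rest on identifying $F(S)^*_\txtLsc$ with functions on the simplex $K=F_{u\mapsto 1}(S)$. Since $u$ is compact and full, every nonzero functional is a positive scalar multiple of a unique normalized one, so restriction to $K$ sets up an order-isomorphism between $F(S)^*_\txtLsc$ and $\LAff(K)$ (homogeneity extends a function on $K$ to a linear functional on the cone). Under this identification, surjectivity of $\kappa$ onto $F(S)^*_\txtLsc$ is exactly condition (4); and (4) trivially implies (5) because $\Aff(K)_{++}\subseteq\LAff(K)_{++}$, so one may even take $\varepsilon=0$. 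I would spell out this dictionary once and then treat $(3)$ and $(4)$ as interchangeable.

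\textbf{The main obstacle.} I expect the genuinely delicate step to be $(4)\Rightarrow(3)$ run in reverse, i.e. showing that realizing every function in $\LAff(K)_{++}$ (or divisibility of $S_\txtSoft$) gives \emph{full} surjectivity of $\kappa$ onto all of $F(S)^*_\txtLsc$, rather than merely onto the strictly positive part. The issue is the behaviour at the boundary values $0$ and $\infty$ and at non-full functionals: the normalization argument only sees functionals with $\lambda(u)\in(0,\infty)$, so one must separately account for the largest functional $\lambda_\infty$ and for functions taking the value $0$ on parts of $K$. I would handle this by first reducing to strictly positive, finite functions using simplicity (every nonzero soft element has $\widehat{a}$ strictly positive by \autoref{prp:structureSimpleCu}(3), and $\infty a$ is the top element) and then invoking the countable basis to build the general realization as an increasing supremum of realizations of strictly positive pieces. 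Controlling this limiting step — ensuring the suprema of the realizing soft elements have exactly the prescribed rank, with no loss from almost-unperforation slack — is where the real care is needed.
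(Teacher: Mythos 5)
Your overall skeleton matches the paper's (predecessor map $\varrho$, the order-embedding $\kappa$ from \autoref{prp:charAlmUnp}, and the dictionary $F(S)^*_\txtLsc\cong\LAff(K)_{++}\cup\{0\}$ coming from $K$ being a compact base of $F(S)\setminus\{\lambda_\infty\}$), but the two load-bearing steps are gapped. First, your $(1)\Rightarrow(2)$: the elements $x_n$ produced by applying almost divisibility to the pairs $a_n'\ll a_n$ are pairwise incomparable, and no ``increasing selection'' is available; even if one arranges an increasing sequence, the supremum $x$ only satisfies $kx\leq a\leq (k+1)x$, which is \emph{not} $kx=a$ (and cannot be upgraded by almost unperforation, since $\widehat{a}$ and $k\widehat{x}$ need not agree). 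Passing from almost divisibility to exact divisibility of $S_\txtSoft$ is a genuine theorem — the paper cites \cite[Theorem~7.5.4]{AntPerThi18:TensorProdCu}, whose proof is an interleaving argument exploiting softness and almost unperforation — and your one-line supremum construction does not reproduce it. Second, and more seriously, your $(2)\Rightarrow(3)$ consists of the sentence that divisibility ``should make $\widehat{S_\txtSoft}$ dense \ldots forcing it to exhaust $F(S)^*_\txtLsc$.'' No argument for density is given, and none is easy: a priori the image of $\kappa$ is only closed under addition, division by integers, and suprema of increasing sequences, and it is exactly the content of Robert's realization theorem \cite[Theorem~3.2.1]{Rob13Cone} (via ``real multiplication'' and the lattice structure of the cone $F(S)$) that such a subset must be all of $L(F(S))=F(S)^*_\txtLsc$. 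The paper's proof of $(2)\Rightarrow(3)$ is essentially a reduction to that theorem; without it, your cycle is broken at its central link. Note the irony that approximate realization of arbitrary ranks is statement~(5), i.e.\ what the cycle is supposed to \emph{deliver}, so you cannot invoke it to establish density here.

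Two smaller points. Your direct $(5)\Rightarrow(1)$ needs more care than ``pass to a compactly contained approximant'': for $a'\ll a''\ll a$ the function $\widehat{a''}_{|K}$ is only lower semicontinuous, and sandwiching a continuous affine $g$ between two lsc functions is not a general separation statement; one needs the soft/compact dichotomy of \autoref{prp:structureSimpleCu} together with \autoref{prp:wayBelowSoft} to get $\widehat{a''}_{|K}\ll\widehat{a}_{|K}$ and then \autoref{prp:llInLAff}. (The paper avoids this by proving $(5)\Rightarrow(4)$ instead — realizing $g_n-\varepsilon_n\leq\widehat{a_n}_{|K}\leq g_n-\varepsilon_{n+1}$, replacing $a_n$ by $\varrho(a_n)$, and using the embedding on soft elements to make the sequence increasing — and then closing the cycle through $(3)\Rightarrow(2)\Rightarrow(1)$, where $(2)\Rightarrow(1)$ uses the identity $a+x=\varrho(a)+x$ from \cite[Proposition~5.4.4]{AntPerThi18:TensorProdCu}.) Finally, the boundary worry in your last paragraph is misplaced: once $K$ is recognized as a compact base, strict positivity on $K$ of ranks of nonzero elements and the identification with $\LAff(K)_{++}\cup\{0\}$ are routine; the genuine difficulty of the proposition sits precisely in the two steps you waved through.
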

\begin{proof}
By \autoref{prp:structureSimpleCu}, $S_\txtSoft$ is a simple \CuSgp{} satisfying \axiomO{5} and \axiomO{6}.
We will use the predecessor map $\varrho\colon S\to S_\txtSoft$ from \autoref{prp:realizeRankBySoft}.
Since $S$ is almost unperforated, so is $S_\txtSoft$, by \autoref{prp:charAlmUnp}.
We will frequently use that the map $\kappa\colon S_\txtSoft\to F(S)^*_\txtLsc$ is an order-embedding.
Since $S$ is simple, we obtain that $K$ is a compact base for the cone $F(S)\setminus\{\lambda_\infty\}$.
It follows that the restriction map $F(S)^*_\txtLsc\to\LAff(K)$, $f\mapsto f_{|K}$, defines an order-isomorphism $F(S)^*_\txtLsc\cong\LAff(K)_{++}\cup\{0\}$.
This shows that~(3) and~(4) are equivalent.
It is clear that~(4) implies~(5).

To show that~(1) implies~(2), assume that $S$ is almost divisible.
We first show that $S_\txtSoft$ is almost divisible.
Let $a',a\in S_\txtSoft$ satisfy $a'\ll a$ and let $k\in\NN$.
Since $S$ is almost divisible, we obtain $x\in S$ with $kx\leq a$ and $a'\leq(k+1)x$.
Set $y:=\varrho(x)$.
Then $ky\leq a$ and $a'\leq(k+1)y$, which shows that $S_\txtSoft$ is almost divisible.
It follows from \cite[Theorem~7.5.4]{AntPerThi18:TensorProdCu} that $S_\txtSoft$ is divisible.

To show that~(2) implies~(3), assume that $S_\txtSoft$ is divisible.
Applying \cite[Theorem~7.5.4]{AntPerThi18:TensorProdCu}, it follows that $S_\txtSoft$ has `real multiplication' in the sense of Robert, \cite[Definition~3.1.2]{Rob13Cone}.
Let $L(F(S))\subseteq F(S)^*_\txtLsc$ be defined as in \cite[Section~3.2]{Rob13Cone}.
Using that $S$ is simple and hence the cone $F(S)\setminus\{\lambda_\infty\}$ has a compact base (namely $K$), we deduce $L(F(S))= F(S)^*_\txtLsc$.
The inclusion $S_\txtSoft\to S$ induces a natural identification $F(S)\cong F(S_\txtSoft)$.
We obtain a natural order-isomorphism $L(F(S_\txtSoft))\cong F(S)^*_\txtLsc$.
It follows from \cite[Theorem~3.2.1]{Rob13Cone} that $S_\txtSoft\to F(S_\txtSoft)^*_\txtLsc$, $a\mapsto\widehat{a}$, defines an order-isomorphism $S_\txtSoft\to L(F(S_\txtSoft))$, which implies~(3).

To show that~(5) implies~(4), one proceeds as in the proof of \autoref{prp:realizeRank}:
Given $f\in\LAff(K)_{++}$, choose an increasing sequence $(g_n)_n$ in $\Aff(K)_{++}$ and a decreasing sequence $(\varepsilon_n)_n$ of positive numbers such that $g_0-\varepsilon_0\in\Aff(K)_{++}$ and $f=\sup_n(g_n-\varepsilon_n)$.
By assumption, for reach $n$ we obtain $a_n\in S$ with $g_n-\varepsilon_n\leq\widehat{a_n}_{|K}\leq g_n-\varepsilon_{n+1}$.
Set $b_n:=\varrho(a_n)$.
Then
\[
\widehat{b_n}_{|K}
= \widehat{a_n}_{|K}
\leq g_n-\varepsilon_{n+1}
\leq g_{n+1}-\varepsilon_{n+1}
\leq \widehat{a_{n+1}}_{|K}
= \widehat{b_{n+1}}_{|K},
\]
for each $n$.
Since $b_n$ and $b_{n+1}$ are soft, we deduce $b_n\leq b_{n+1}$.
Thus, the sequence $(b_n)_n$ is increasing.
Set $b:=\sup_n b_n$.
Then $\widehat{b}_{|K}=f$.

To show that~(3) implies~(2), let $a\in S_\txtSoft$ and let $k\in\NN$.
We have $\tfrac{1}{k}\widehat{a}\in F(S)^*_\txtLsc$.
By assumption, there exists $x\in S_\txtSoft$ with $\widehat{x}=\tfrac{1}{k}\widehat{a}$.
Then $k\widehat{x}=\widehat{a}$ and thus $kx=a$.

To show that~(2) implies~(1), assume that $S_\txtSoft$ is divisible.
Let $a',a\in S$ satisfy $a'\ll a$ and let $k\in\NN$.
By assumption, we obtain $x\in S_\txtSoft$ with $kx=\varrho(a)$.
Then $kx = \varrho(a) \leq a$.
By \cite[Proposition~5.4.4]{AntPerThi18:TensorProdCu}, we have $a+x = \varrho(a)+x$.
Using this at the third step, we obtain
\[
a'\leq a \leq a+x = \varrho(a)+x =(k+1)x,
\]
as desired.
\end{proof}

\subsection{The Cuntz semigroup of a C*-algebra}
\label{sec:CuA}

\begin{pgr}
\label{pgr:CuA}
Let $A$ be a \ca{}.
We use $A_+$ to denote the positive elements in $A$.
Given $a\in A_+$ and $\varepsilon>0$, we write $a_\varepsilon$ for the `$\varepsilon$-cut-down' $(a-\varepsilon)_+$, obtained by applying continuous functional calculus for the function $f(t)=\max\{0,t-\varepsilon\}$ to $a$.

Let $a,b\in A_+$.
We write $a\precsim b$ if $a$ is \emph{Cuntz subequivalent} to $b$, that is, if there exists a sequence $(r_n)_n$ in $A_+$ such that $\lim_n\|a-r_nbr_n^*\|=0$.
We write $a\sim b$ if $a$ is \emph{Cuntz equivalent} to $b$, that is, if $a\precsim b$ and $b\precsim a$.
We write $a\subseteq b$ if $a$ belongs to $\overline{bAb}$, the hereditary sub-\ca{} of $A$ generated by $b$.

R{\o}rdams's lemma states that $a\precsim b$ if and only if for every $\varepsilon>0$ there exist $\delta>0$ and $x\in A$ such that
\[
a_\varepsilon = xx^*, \andSep x^*x \subseteq b_\delta.
\]
It follows that $a\precsim b$ if and only if $a_\varepsilon\precsim b$ for every $\varepsilon>0$.
If $A$ has stable rank one, then $a\precsim b$ if and only if for every $\varepsilon>0$ there exists a unitary $u$ in $\widetilde{A}$, the minimal unitization of $A$, such that $ua_\varepsilon u^*\subseteq b$;
see \cite[Proposition~2.4]{Ror92StructureUHF2}.

The \emph{Cuntz semigroup} of $A$, denoted $\Cu(A)$, is the set of Cuntz equivalence classes of positive elements in the stabilization $A\otimes\KK$.
The class of $a\in(A\otimes\KK)_+$ in $\Cu(A)$ is denoted by $[a]$.
The relation $\precsim$ induces a partial order on $\Cu(A)$.
Using a suitable isomorphism $\varphi\colon A\otimes\KK\otimes M_2\xrightarrow{\cong} A\otimes\KK$, one introduces an addition on $\Cu(A)$ by setting $[a]+[b]:=\left[\varphi\begin{psmallmatrix}a&0\\0&b\end{psmallmatrix}\right]$.
This gives $\Cu(A)$ the structure of a \pom.
It was shown in \cite{CowEllIva08CuInv} that $\Cu(A)$ is always a \CuSgp{}.
\end{pgr}

The next result summarizes the basic properties of Cuntz semigroups of \ca{s}.
We refer to \cite[Section~4]{AntPerThi18:TensorProdCu} for details.
In \autoref{sec:O6+}, we show that $\Cu(A)$ satisfies a strengthened version of \axiomO{6} if $A$ has stable rank one.

\begin{prp}
\label{prp:propertiesCuA}
Let $A$ be a \ca{}.
Then $\Cu(A)$ is a \CuSgp{} satisfying \axiomO{5} and \axiomO{6}.
If $A$ is separable, then $\Cu(A)$ is countably-based.
If $A$ has stable rank one, then $\Cu(A)$ is weakly cancellative.
If $A$ is simple, then $\Cu(A)$ is simple.
\end{prp}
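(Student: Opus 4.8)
The plan is to assemble the four assertions from the standard machinery for Cuntz semigroups, citing the foundational results now packaged in \cite{AntPerThi18:TensorProdCu}. The statement that $\Cu(A)$ is a \CuSgp{} satisfying \axiomO{1}--\axiomO{4} is the original theorem of Coward-Elliott-Ivanescu \cite{CowEllIva08CuInv}, so I would first recall the construction of $\Cu(A)$ from \autoref{pgr:CuA} and then verify the four axioms directly using R{\o}rdam's lemma. The key technical input is the characterization $a\precsim b$ if and only if $a_\varepsilon\precsim b$ for every $\varepsilon>0$, which gives the approximation by $\varepsilon$-cut-downs $a_\varepsilon=(a-\varepsilon)_+$ that realizes the way-below relation: indeed $[a_\varepsilon]\ll[a]$ for every $\varepsilon>0$, and $[a]=\sup_\varepsilon[a_\varepsilon]$. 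This simultaneously yields \axiomO{2}, and \axiomO{1} and \axiomO{4} follow from the fact that supremum of an increasing sequence of classes is computed by a diagonal positive element in $A\otimes\KK$.

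For \axiomO{5} and \axiomO{6} I would invoke the results in \cite[Section~4]{AntPerThi18:TensorProdCu} rather than reprove them, since these are precisely the properties established there for Cuntz semigroups of arbitrary \ca{s}; the underlying mechanism is again R{\o}rdam's lemma combined with a careful manipulation of hereditary sub\nobreakdash-\ca{s} and the existence of almost complements coming from orthogonality arguments inside $\overline{bAb}$. The countable basis in the separable case comes from choosing a countable dense subset of the positive elements of $A\otimes\KK$ and taking their $\varepsilon$\nobreakdash-cut-downs over rational $\varepsilon$; every element is then a supremum of an increasing sequence drawn from this countable set, giving the countably based conclusion.

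The remaining two claims are the ones requiring genuine hypotheses on $A$. Weak cancellation under stable rank one is the result of R{\o}rdam: here I would use the stable-rank-one characterization of Cuntz subequivalence from \autoref{pgr:CuA}, namely that $a\precsim b$ is witnessed by unitaries $u$ in $\widetilde{A}$ with $ua_\varepsilon u^*\subseteq b$. The point is that $[a]+[x]\leq[b]+[x]$ lets one transport the summand $x$ by a unitary and cancel it, so that $[a]\leq[b]$; this is exactly the content of weak cancellation as reformulated in \autoref{pgr:Cu}. Finally, simplicity of $\Cu(A)$ when $A$ is simple follows because every nonzero positive element generates $A\otimes\KK$ as an ideal, so for nonzero $a$ and any $b$ one has $[b]\leq\infty[a]$, and by \autoref{pgr:CuFurther} this says $\infty[a]$ is the largest element, which is the criterion for simplicity of the \CuSgp{}. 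The main obstacle is verifying \axiomO{5} and \axiomO{6} from scratch if one does not wish to cite them; since the excerpt permits assuming earlier results, I would lean on \cite[Section~4]{AntPerThi18:TensorProdCu} and concentrate the original effort on the weak-cancellation step, which is where stable rank one enters essentially.
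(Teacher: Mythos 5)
Your route coincides with the paper's own treatment: Proposition~\ref{prp:propertiesCuA} is stated there without proof, as a summary with a pointer to \cite[Section~4]{AntPerThi18:TensorProdCu} (together with \cite{CowEllIva08CuInv} for the fact that $\Cu(A)$ is a \CuSgp{}, \cite[Proposition~5.1.1]{Rob13Cone} for \axiomO{6}, and \cite[Theorem~4.3]{RorWin10ZRevisited} for weak cancellation under stable rank one, the latter cited in the proof of \autoref{prp:realizeRankCa}). Your citation plan, the countable-basis argument via cut-downs $[(b_k-q)_+]$ of a countable dense set of positive elements over rational $q$, and the fullness argument for simplicity (every nonzero $a$ has $\infty[a]$ largest, matching the criterion in \autoref{pgr:CuFurther}) are all correct.

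Two of your sketched mechanisms are, however, wrong as stated and would fail if carried out literally. First, weak cancellation is \emph{not} the implication $[a]+[x]\leq[b]+[x]\Rightarrow[a]\leq[b]$; that is full cancellation, which fails even under stable rank one: in $\Cu(\mathcal{Z})\cong\NN\sqcup(0,\infty]$ take $x=\infty$, so that $2+x\leq 1+x$ while $2\nleq 1$. The reformulation in \autoref{pgr:Cu} requires $a+x\ll b+x$, equivalently $a+x\leq b+x'$ with $x'\ll x$, and this compact containment is precisely what the R{\o}rdam--Winter proof exploits; it is not a one-line transport of the summand $x$ by a unitary. Second, the supremum of an increasing sequence $([a_n])_n$ is not represented by a diagonal positive element: the diagonal sum $\bigoplus_n a_n$ represents $\sum_n [a_n]$, which strictly dominates $\sup_n[a_n]$ in general. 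The construction of suprema in \cite{CowEllIva08CuInv} instead extracts, via R{\o}rdam's lemma, a single limit element from a telescoped sequence of cut-downs. Since you delegate both points to the cited sources anyway, the proposal as a whole stands, but these two descriptions should be corrected so the heuristics match the theorems being invoked.
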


\begin{pgr}
\label{pgr:qt}
The terminology used in connection with quasitraces on \ca{s} is non-uniform across the literature.
We follow \cite[Subsection~2.9]{BlaKir04PureInf}.

Let $A$ be a \ca{}.
A \emph{quasitrace} on $A$ is a map $\tau\colon A_+\to[0,\infty]$ with $\tau(0)=0$, satisfying $\tau(xx^*)=\tau(x^*x)$ for every $x\in A$, and such that $\tau(a+b)=\tau(a)+\tau(b)$ for all $a,b\in A_+$ that commute.
A quasitrace $\tau$ is \emph{bounded} if $\tau(a)<\infty$ for all $a\in A_+$, in which case $\tau$ extends canonically to a (non-linear) functional $A\to\CC$ satisfying $\tau(a)=\tau(a_+)-\tau(a_-)$ for all self-adjoint $a\in A$, and satisfying $\tau(a+ib)=\tau(a)+i\tau(b)$ for all self-adjoint $a,b\in A$.

A \emph{$2$-quasitrace} on $A$ is a quasitrace that extends to a quasitrace $\tau_2$ on $A\otimes M_2$ with $\tau_2(a\otimes e_{1,1})=\tau(a)$ for every $a\in A_+$, where $e_{1,1}$ is the upper-left rank-one projection.
A quasitrace $\tau$ is said to be \emph{lower semicontinuous} if $\tau(a)=\sup_{\varepsilon>0}\tau(a_\varepsilon)$ for every $a\in A_+$.
A bounded quasitrace is automatically lower semicontinuous.
We let $\QT(A)$ denote the set of lower semicontinuous, $2$-quasitraces on $A$.
Every $\tau\in\QT(A)$ is order-preserving on $A_+$;
see \cite[Remarks~2.27]{BlaKir04PureInf}.
The set $\QT(A)$ has the structure of an algebraically ordered cone and it can be equipped with a natural compact, Hausdorff topology;
see \cite[Section~4.1]{EllRobSan11Cone}
We warn the reader that in \cite{BlaHan82DimFct} quasitraces are assumed to be bounded, and in \cite{Haa14Quasitraces} they are also assumed to be $2$-quasitraces.

Let $\tau\in\QT(A)$.
There is a unique extension of $\tau$ to a lower semicontinuous quasitrace $\tau_\infty\colon (A\otimes\KK)_+\to[0,\infty]$ satisfying $\tau_\infty(a\otimes e)=\tau(a)$ for $a\in A_+$ and for every rank-one projection $e\in\KK$.
Abusing notation, we usually denote $\tau_\infty$ by $\tau$.
The induced \emph{dimension function} $d_\tau\colon (A\otimes\KK)_+\to[0,\infty]$ is given by
\[
d_\tau(a) := \lim_n \tau(a^{1/n}),
\]
for $a\in (A\otimes\KK)_+$.

If $a,b\in(A\otimes\KK)_+$ satisfy $a\precsim b$, then $d_\tau(a)\leq d_\tau(b)$.
Therefore, $d_\tau$ induces a well-defined, order-preserving map $\Cu(A)\to[0,\infty]$, which we also denote by $d_\tau$.
One can show that this map is a functional on $\Cu(A)$.
Moreover, every functional on $\Cu(A)$ arises in this way;
see \cite[Proposition~4.2]{EllRobSan11Cone}.
It follows that the assignment $\tau\mapsto d_\tau$ is an additive bijection between $\QT(A)$ and $F(\Cu(A))$, and it is even an isomorphism of ordered topological cones;
see \cite[Theorem~4.4]{EllRobSan11Cone}.

If $A$ is unital, we let $\QT_{1\mapsto 1}(A)$ denote the set of $2$-quasitraces $\tau$ satisfying $\tau(1)=1$.
If $A$ is stably finite, then $\QT_{1\mapsto 1}(A)$ is a nonempty, compact, convex subset of $\QT(A)$ that has the structure of a Choquet simplex;
see \cite[Theoreom~II.4.4]{BlaHan82DimFct}.
Under the identification $\QT(A)\cong F(\Cu(A))$, the Choquet simplex $\QT_{1\mapsto 1}(A)$ corresponds to $F_{[1]\mapsto 1}(\Cu(A))$.
\end{pgr}

\section{Lower semicontinuous affine functions on Choquet simplices}
\label{sec:LAff}


\begin{pgr}
\label{pgr:LAff}
Let $K$ be a \emph{compact, convex set}, by which we always mean a compact, convex subset of a locally convex, Hausdorff, real topological vector space.
The set of extreme points in $K$ is denoted by $\partial_e K$.
We use $\Aff(K)$ to denote the space of continuous, affine functions $K\to\RR$.
We let $\LAff(K)$ denote the set of lower semicontinuous, affine functions $K\to(-\infty,\infty]$.
We equip $\Aff(K)$ and $\LAff(K)$ with pointwise order and addition.
We let $\Aff(K)_{++}$ and $\LAff(K)_{++}$ denote the subsets of strictly positive elements in $\Aff(K)$ and $\LAff(K)$, respectively.

Recall that a partially ordered set $S$ is called \emph{directed complete}, or a \emph{dcpo} for short, if every (upward) direct subset of $S$ has a supremum; equivalently, every increasing net in $S$ has a supremum;
see \cite[Definition~O-2.1, p.9]{GieHof+03Domains}.
Given an increasing net in $\LAff(K)$, the pointwise supremum is a function in $\LAff(K)$.
This shows that $\LAff(K)$ is a dcpo.
\end{pgr}

\begin{prp}[{\cite[Corollary~I.1.4, p.3]{Alf71CpctCvxSets}}]
\label{prp:LAffApproxByAff}
Let $K$ be a compact, convex set.
Then every element of $\LAff(K)$ is the supremum of an increasing net in $\Aff(K)$.
\end{prp}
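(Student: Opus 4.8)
The plan is to prove the stronger pointwise statement that for every $f\in\LAff(K)$ and every $x_0\in K$ one has $f(x_0)=\sup\{a(x_0):a\in\Aff(K),\ a\leq f\}$, and then to show that the family $\mathcal F:=\{a\in\Aff(K):a\leq f\}$ is upward directed. Once both are in hand, $\mathcal F$ (indexed by itself and ordered pointwise) is an increasing net in $\Aff(K)$ whose supremum is $f$, which is exactly the assertion. Note that since $f$ is lower semicontinuous on the compact set $K$ it attains its infimum, so $f$ is bounded below; the degenerate case $f\equiv\infty$ is disposed of at once (for instance $\Aff(K)$ itself is upward directed, via $a_1,a_2\mapsto a_1+a_2+C$ for a large constant $C$, with pointwise supremum $\infty$).

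For the pointwise statement I would work in $V\times\RR$, where $V$ is the ambient locally convex space. Because $f$ is affine and lower semicontinuous, its epigraph $E:=\{(x,t)\in K\times\RR:t\geq f(x)\}$ is a closed, convex subset of $V\times\RR$, and it is nonempty unless $f\equiv\infty$. Given $x_0$ and $t_0<f(x_0)$, the point $(x_0,t_0)$ lies outside the closed convex set $E$, so by Hahn--Banach it is strictly separated from $E$ by a continuous linear functional $\Phi(x,t)=\phi(x)+\alpha t$. Since $E$ is closed upward in the $t$-direction one gets $\alpha\geq 0$. When $\alpha>0$ the separating hyperplane is the graph of a continuous affine function $a(x)=(\gamma-\phi(x))/\alpha$ with $a\leq f$ and $a(x_0)>t_0$. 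The case $\alpha=0$ can occur only when $x_0\notin\mathrm{dom}(f)$ (so $f(x_0)=\infty$): here $\phi$ separates $x_0$ from $\mathrm{dom}(f)$, and adding a large positive multiple of $\phi-\gamma$ to a fixed minorant (produced by a non-degenerate separation at a point of $\mathrm{dom}(f)$) gives a continuous affine minorant whose value at $x_0$ exceeds $t_0$. This establishes $f=\sup\mathcal F$ pointwise.

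The main obstacle is the upward-directedness of $\mathcal F$: given $a_1,a_2\in\mathcal F$ one must find $a_3\in\Aff(K)$ with $\max(a_1,a_2)\leq a_3\leq f$, that is, interpose a continuous affine function between the convex function $m:=\max(a_1,a_2)$ and $f$. This is precisely where affineness of $f$ is essential and cannot be weakened to convexity or concavity: no affine function lies between $\max(x-\tfrac14,-x-\tfrac14)$ and $x^2$ on $[-1,1]$, and none lies between $y^2$ and $1-x^2$ on the unit disk. The mechanism by which affineness rescues the argument is the following. Let $\hat m$ denote the concave hull of $m$; since $f$ is affine, it is in particular a concave majorant of $m$, whence $\hat m\leq f$. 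One then separates the closed convex set $E=\mathrm{epi}(f)$ from the convex region $\{(x,t):t\leq\hat m(x)\}$, whose interiors are disjoint because $\hat m\leq f$. The two sets can touch only along $\{(x,f(x)):\hat m(x)=f(x)\}$, and as $f$ is affine this touching set is contained in the planar graph of $f$; this planarity is what forces the separating hyperplane to be of graph type (rather than vertical), thereby delivering a continuous affine $a_3$ with $m\leq\hat m\leq a_3\leq f$.

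Assembling the pieces, $\mathcal F$ is an upward-directed family in $\Aff(K)$ with $\sup\mathcal F=f$ pointwise, so the identity net on $\mathcal F$ is the desired increasing net. I expect the directedness step to be the delicate part of the argument, precisely because the analogous interposition fails for merely convex or concave functions; the pointwise separation and the final assembly are routine. The result itself is classical, going back to \cite{Alf71CpctCvxSets}.
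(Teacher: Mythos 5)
Your pointwise step is correct and classical: separating $(x_0,t_0)$ from the closed convex epigraph $E=\{(x,t):x\in K,\ t\geq f(x)\}$ (closedness from lower semicontinuity and compactness of $K$, convexity from affineness of $f$), handling the vertical case via $\mathrm{dom}(f)$, is exactly right, and it in fact produces minorants with $a(x)<f(x)$ for \emph{all} $x$. The genuine gap is the directedness step for the non-strict family $\mathcal F=\{a\in\Aff(K):a\leq f\}$. What you need there is a non-strict interposition theorem, and your argument for it does not work in the setting the proposition is used in. First, in the typical infinite-dimensional case both $\mathrm{epi}(f)$ and your set $\{(x,t):t\leq\hat m(x)\}$ have empty interior in $V\times\RR$ (they sit over $K$, which has empty interior), so ``their interiors are disjoint'' yields no separating hyperplane: Hahn--Banach separation in a locally convex space requires either an interior point of one set or a disjoint compact/closed pair, and your two sets actually \emph{touch} wherever $a_i=f$, so they are not disjoint and admit no strict separation at all. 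Second, ``the touching set is planar, which forces the hyperplane to be of graph type'' is an assertion, not a mechanism: nothing prevents a hypothetical separating functional from being vertical, and the standard way to exclude verticality (a projection argument) needs disjointness of the two sets. Finally, note that non-strict interposition of a continuous affine function between an upper semicontinuous convex minorant and a lower semicontinuous concave majorant is Edwards' separation theorem, a Choquet-simplex phenomenon \cite{Edw69UniformApproxAff}; the proposition is stated for arbitrary compact convex $K$, and you give no reason the non-strict statement survives for affine $f$ on general $K$ --- even if it did, your proof of it would still be incomplete.

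The standard repair --- and essentially the argument behind the paper's citation \cite[Corollary~I.1.4]{Alf71CpctCvxSets}; the paper itself gives no proof --- is to use the strict family $\mathcal F'=\{a\in\Aff(K):a(x)<f(x)\ \forall x\in K\}$, which suffices because your pointwise step already realizes $f$ as $\sup\mathcal F'$. Directedness of $\mathcal F'$ is then clean: for $a_1,a_2\in\mathcal F'$ let $G_i$ denote the graph of $a_i$ and set $C:=\mathrm{conv}(G_1\cup G_2)$, a \emph{compact} convex subset of $V\times\RR$. A point of $C$ has the form $\bigl(\lambda x+(1-\lambda)y,\ \lambda a_1(x)+(1-\lambda)a_2(y)\bigr)$, and affineness of $f$ gives $\lambda a_1(x)+(1-\lambda)a_2(y)<\lambda f(x)+(1-\lambda)f(y)=f(\lambda x+(1-\lambda)y)$, so $C$ is disjoint from the closed convex set $\mathrm{epi}(f)$; the compact-versus-closed Hahn--Banach theorem now separates them strictly by some $\Phi(x,t)=\phi(x)+\alpha t$. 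Upward invariance of the epigraph forces $\alpha\geq 0$, and $\alpha=0$ is impossible because $C$ projects onto all of $K$ while $\mathrm{epi}(f)$ projects onto the nonempty set $\mathrm{dom}(f)$ (the case $f\equiv\infty$ being handled by constants, as you do). Then $a_3:=(c-\phi)/\alpha$ satisfies $\max(a_1,a_2)<a_3<f$, so $a_3\in\mathcal F'$. This renders the concave hull $\hat m$ unnecessary and sidesteps the separation difficulties entirely; in the paper's actual application $K$ is a Choquet simplex, where your non-strict claim would follow from Edwards' theorem, but that cannot be invoked for the proposition as stated.
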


\begin{prp}[{Bauer's minimum principle, \cite{Bau58Minimalstellen1}}]
\label{prp:BauerMinPrinciple}
Let $K$ be a compact, convex set, and let $f\colon K\to(-\infty,\infty]$ be a lower semicontinuous, concave function.
Then $\inf f(K) = \min f(\partial_e K)$.
This holds in particular for all $f\in\LAff(K)$.
\end{prp}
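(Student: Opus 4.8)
The plan is to prove the equivalent statement that the infimum of a lower semicontinuous concave function is attained at an extreme point; applied to $-f$ this is exactly Bauer's maximum principle, that an upper semicontinuous convex function attains its maximum at an extreme point. I would carry out the concave/infimum version directly. First I record the easy half and the reduction. Since $f$ is lower semicontinuous and $K$ is compact, $f$ attains its infimum on $K$, and as $f>-\infty$ this value is a finite real number $m:=\inf f(K)$. Because $\partial_e K\subseteq K$ we trivially have $m\leq\inf f(\partial_e K)$, so it suffices to exhibit a point $p\in\partial_e K$ with $f(p)=m$; this simultaneously shows that the infimum over $\partial_e K$ is attained and equals $m$, giving the claimed equality $\inf f(K)=\min f(\partial_e K)$.

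Next I would localize the minimum on a face. Set $M:=\{x\in K:f(x)=m\}$. By lower semicontinuity and compactness of $K$, the set $M$ is nonempty and closed. I claim $M$ is an \emph{extreme subset} (face) of $K$: if $x=ty+(1-t)z$ with $t\in(0,1)$ and $y,z\in K$, then concavity gives $f(x)\geq tf(y)+(1-t)f(z)\geq m$, and since $f(x)=m$ while $f(y),f(z)\geq m$, equality forces $f(y)=f(z)=m$, i.e.\ $y,z\in M$. I emphasize that $M$ need not be convex, so one cannot directly apply the Krein--Milman theorem to $M$; instead I extract an extreme point through a minimal-face argument.

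To do so, I would order the nonempty closed extreme subsets of $K$ contained in $M$ by reverse inclusion. Any chain consists of nonempty closed sets with the finite intersection property, so by compactness its intersection is nonempty, and an intersection of extreme subsets is again an extreme subset; hence Zorn's lemma yields a minimal nonempty closed extreme subset $F_0\subseteq M$. If $F_0$ contained two distinct points, Hahn--Banach would supply a continuous linear functional $\ell$ that is nonconstant on $F_0$, and then $\{x\in F_0:\ell(x)=\max_{F_0}\ell\}$ would be a proper, nonempty, closed extreme subset of $F_0$, hence (by transitivity of the extreme-subset relation) of $K$, contradicting minimality. Therefore $F_0=\{p\}$ is a singleton, a one-point extreme subset is precisely an extreme point, so $p\in\partial_e K$, and $p\in M$ gives $f(p)=m$. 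The concluding remark for $f\in\LAff(K)$ is then immediate, since affine functions are in particular concave (and finite-valued on $K$ away from $+\infty$).

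I expect the minimal-face step to be the crux. The routine parts are the concavity computation showing $M$ is a face and the observation that lower semicontinuity on a compact set forces attainment of the infimum. The genuine work is in the extraction of the extreme point: one must check that the extreme-subset property is stable under arbitrary intersections and is transitive, that compactness makes Zorn's lemma applicable to chains of closed faces, and that Hahn--Banach separation collapses a minimal closed face to a single point. These are precisely the ingredients of the standard Choquet-theoretic proof of Bauer's principle.
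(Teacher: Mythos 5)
Your proof is correct, but it takes a genuinely different (more self-contained) route than the paper. The paper does not reprove the classical principle: it cites Bauer's original article, points to Choquet's Theorem~25.9 for the dual statement (every upper semicontinuous, convex, \emph{real-valued} function attains its maximum on $\partial_e K$), obtains the concave minimum version by negation, and disposes of the value $\infty$ by replacing $f$ with the pointwise infimum of $f$ and a fixed constant $c>\inf f(K)$, which is again lower semicontinuous and concave and has the same infimum. You instead prove the extended-valued statement from scratch: the minimum set $M$ is a nonempty, closed extreme subset of $K$ (your concavity computation is right, and you correctly observe that $M$ need not be convex, so one cannot apply Krein--Milman to $M$ directly); then Zorn's lemma on chains of closed extreme subsets, compactness for nonempty intersections of chains, transitivity of the extreme-subset relation, and Hahn--Banach separation (valid since $K$ sits in a locally convex Hausdorff space, per the paper's standing convention) collapse a minimal closed face to a singleton, i.e.\ an extreme point in $M$. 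This is precisely the classical argument underlying the paper's references, so the mathematical content matches; what your version buys is self-containedness, the avoidance of the truncation step (the concavity inequality is safe with values in $(-\infty,\infty]$ since $f>-\infty$ rules out $\infty-\infty$), and the marginally stronger conclusion $M\cap\partial_e K\neq\emptyset$. One hair to comb: if $f\equiv\infty$ then $m=\inf f(K)=\infty$ is not finite as you assert; but in that degenerate case $M=K$ and your minimal-face extraction still produces an extreme point, so the equality holds and nothing in the argument breaks.
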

\begin{proof}
The result is formulated in German as the Corollary to Satz~2 in \cite{Bau58Minimalstellen1}.
A simplified proof of the dual result (Bauer's maximum pricniple) can be found in \cite[Theorem~25.9, p.102]{Cho69LectAna2}:
Every upper semicontinuous, convex function $K\to\RR$ attains its maximum on $\partial_e K$.
This implies Bauer's minimum principle for lower semicontinuous, concave functions $K\to\RR$.
The general case (allowing the value $\infty$) is clear if $\inf f(X)=\infty$.
Otherwise, the result follows by considering the function given as the pointwise infimum of $f$ and a fixed value $>\inf f(K)$.
\end{proof}

The following result is not as well known.
Since the only reference I could find is in German, I include a short argument for the convenience of the reader.

\begin{prp}[{Bauer, \cite{Bau63SimplexeAbgExtr}}]
\label{prp:LAffOrderExtr}
Let $K$ be a compact, convex set, and let $f,g\in\LAff(K)$.
Then $f\leq g$ if and only if $f_{|\partial_eK}\leq g_{|\partial_eK}$.
\end{prp}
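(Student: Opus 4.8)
The forward implication is immediate, since $\partial_e K\subseteq K$. For the converse, suppose $f_{|\partial_eK}\leq g_{|\partial_eK}$; the plan is to reduce to the case that $f$ is continuous, and then to subtract it from $g$ and apply Bauer's minimum principle.

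First I would reduce to the case $f\in\Aff(K)$. Using \autoref{prp:LAffApproxByAff}, write $f=\sup_i a_i$ for an increasing net $(a_i)_i$ in $\Aff(K)$. Each $a_i$ satisfies $a_i\leq f$, so in particular $(a_i)_{|\partial_eK}\leq f_{|\partial_eK}\leq g_{|\partial_eK}$. If I can show $a_i\leq g$ on all of $K$ for each $i$, then $f=\sup_i a_i\leq g$, as desired. Thus it suffices to treat a continuous affine function in place of $f$.

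So assume now $f\in\Aff(K)$, which in particular is finite-valued. I would consider $h:=g-f\colon K\to(-\infty,\infty]$. Since $f$ is continuous (hence both lower and upper semicontinuous) and $g$ is lower semicontinuous, $h$ is lower semicontinuous; since $f$ and $g$ are affine and $f$ is finite, $h$ is affine, hence concave. Thus $h\in\LAff(K)$, and Bauer's minimum principle (\autoref{prp:BauerMinPrinciple}) applies to give $\inf h(K)=\min h(\partial_eK)$. By hypothesis $h\geq 0$ on $\partial_eK$, so $\min h(\partial_eK)\geq 0$, and therefore $h\geq 0$ on all of $K$, that is, $f\leq g$.

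The only real subtlety, and the reason the reduction to continuous $f$ is needed, is that $f$ and $g$ may take the value $\infty$, so the difference $g-f$ is a priori ill-defined on the set where both are infinite. Passing to the continuous approximants $a_i$ makes the subtracted function finite and allows the difference to be formed everywhere on $K$; once that is arranged, the argument is a one-line application of the minimum principle.
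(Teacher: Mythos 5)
Your proof is correct and follows essentially the same route as the paper's: both approximate $f$ from below by an increasing net in $\Aff(K)$ via \autoref{prp:LAffApproxByAff}, subtract the continuous approximant from $g$ to get a well-defined element of $\LAff(K)$, and apply Bauer's minimum principle (\autoref{prp:BauerMinPrinciple}) to conclude. Your explicit remark about why the reduction is needed (to avoid the ill-defined difference $\infty-\infty$) is a nice clarification that the paper leaves implicit.
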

\begin{proof}
Apply \autoref{prp:LAffApproxByAff} to obtain an increasing net $(f_j)_{j\in J}$ in $\Aff(K)$ with supremum $f$.
For each $j\in J$, set $g_j:=g-f_j$.
Since $f_j$ is continuous and affine, $g_j$ belongs to $\LAff(K)$.
Moreover, we have $0\leq g_j(\lambda)$ for every $\lambda\in\partial_e K$.
Applying Bauer's minimum principle (\autoref{prp:BauerMinPrinciple}), we obtain $0\leq g_j$, and thus $f_j\leq g$.
Passing to the supremum over $j$, we deduce $f\leq g$.
\end{proof}

\begin{rmk}
Let $K$ be a metrizable, compact, convex set, and let $f,g\colon K\to\RR$ be affine, lower semicontinuous functions satisfying $f_{|\partial_eK}\leq g_{|\partial_eK}$.
Then the conclusion of \autoref{prp:LAffOrderExtr} may also be obtained as follows:
By Choquet's theorem, \cite[Corollary 1.4.9, p.36]{Alf71CpctCvxSets}, for every $\lambda\in K$ there exists a boundary measure $\mu$ (a Borel probability measure $\mu$ with $\mu(\partial_e K)=1$) with barycenter $\lambda$, that is, such that
\[
h(\lambda) = \int_K h(\lambda)d\mu(\lambda) = \int_{\partial_eK} h(\lambda)d\mu(\lambda),
\]
for all $h\in\Aff(K)$.
The above formula holds also for affine functions $K\to\RR$ of first Baire
class, in particular for $f$ and $g$;
see \cite[p.88]{Phe01LNMChoquet}.
Given $\lambda\in K$, we choose a boundary measure $\mu$ with barycenter $\lambda$, and obtain
\[
f(\lambda)
= \int_{\partial_e K} f(\lambda)d\mu(\lambda)
\leq \int_{\partial_e K} g(\lambda)d\mu(\lambda)
= g(\lambda).
\]
\end{rmk}

Recall the definition of a domain and the way-below relation from \autoref{pgr:interpolation}.

\begin{lma}
\label{prp:llInLAff}
Let $K$ be a compact, convex set.
Then $\LAff(K)$ is a domain.
Given $f,g\in\LAff(K)$, we have $f\ll g$ if and only if there exist $h\in\Aff(K)$ and $\varepsilon>0$ with $f+\varepsilon\leq h\leq g$.
Further, $K$ is metrizable if and only if $\LAff(K)$ is countably based.
\end{lma}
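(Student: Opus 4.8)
The plan is to first pin down the way-below relation and read off both the domain structure and the countability statement from it; recall from \autoref{pgr:LAff} that $\LAff(K)$ is a dcpo and from \autoref{pgr:interpolation} that $\ll$ is the net-theoretic way-below relation. For the backward implication of the characterization, suppose $f+\varepsilon\le h\le g$ with $h\in\Aff(K)$ and $\varepsilon>0$, and let $(g_j)_j$ be an increasing net with $g\le\sup_j g_j$. Since each $g_j$ is lower semicontinuous and $h$ is continuous, the sets $U_j:=\{\lambda\in K:g_j(\lambda)>h(\lambda)-\varepsilon\}$ are open and increasing, and they cover $K$ because $\sup_j g_j\ge h>h-\varepsilon$ pointwise; by compactness and directedness a single $U_j$ equals $K$, so $g_j>h-\varepsilon\ge f$ on $K$ and hence $f\le g_j$. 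This is a Dini-type argument. For the forward implication, I would use \autoref{prp:LAffApproxByAff} to write $g=\sup_j h_j$ for an increasing net $(h_j)_j$ in $\Aff(K)$, and pass to the increasing net $(h_j-\tfrac1n)_{(j,n)}$ indexed by $J\times\NN$ with the product order, whose supremum is again $g$; if $f\ll g$ then some $(j,n)$ satisfies $f\le h_j-\tfrac1n$, so $h:=h_j$ and $\varepsilon:=\tfrac1n$ work.

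For the domain structure, the equality $\sup\{f:f\ll g\}=g$ is immediate: the elements $h_j-\tfrac1n$ above are all $\ll g$ by the characterization and have supremum $g$, while every $f\ll g$ satisfies $f\le g$. The crux is \textbf{directedness} of $\{f:f\ll g\}$. Here the naive idea of taking the pointwise maximum of two dominating affine functions fails, since the maximum is only convex, not affine, and a strictly convex function cannot be sandwiched below $g$ by an affine one with room to spare. Instead I would absorb the slack into the approximating net: given $f_1,f_2\ll g$, the characterization yields $h_i\in\Aff(K)$ with $f_i\le h_i$ and $h_i+\varepsilon\le g$ for a common $\varepsilon>0$; since $h_i+\tfrac\varepsilon2\ll g$ and $g=\sup_j h_j$, there are indices with $h_i+\tfrac\varepsilon2\le h_{j_i}$, and choosing $j\ge j_1,j_2$ gives $h_i\le h_j-\tfrac\varepsilon2$. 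Then $f:=h_j-\tfrac\varepsilon4$ lies above both $f_1,f_2$ and satisfies $f\ll g$, as required.

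For the metrizability equivalence I would go through separability of $\Aff(K)$. If $K$ is metrizable then $C(K)$, and hence its closed subspace $\Aff(K)$, is separable; taking a countable sup-norm dense set $\{h_k\}\subseteq\Aff(K)$, the countable family $\{h_k-\tfrac1m:k,m\in\NN\}$ is a basis by the characterization of $\ll$ together with a routine $\varepsilon$-approximation, so $\LAff(K)$ is countably based. Conversely, suppose $B$ is a countable basis. For $h\in\Aff(K)$ and $n\in\NN$ one has $h-\tfrac1n\ll h\ll h+\tfrac1n$, so there are $b,c\in B$ with $h-\tfrac1n\le b\le h\le c\le h+\tfrac1n$; in particular $b$ is bounded and $\|h-b\|_\infty\le\tfrac1n$. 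Hence $\Aff(K)$ lies in the sup-norm closure of the countable set of bounded elements of $B$, so it is separable, and therefore $K$ is metrizable.

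The one genuinely nontrivial point is the directedness of $\{f:f\ll g\}$: the obvious pointwise-maximum construction leaves the class of affine functions, and the repair is the absorption of the gap into the net approximating $g$ from below. The two Dini/compactness steps, in the characterization and in the directedness argument, are where the compactness of $K$ is used essentially.
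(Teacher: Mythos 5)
Your proof is correct and follows essentially the same route as the paper's: the same characterization of $\ll$ via an affine function $h\in\Aff(K)$ with a positive gap $\varepsilon$ (proved by the same compactness/Dini-type covering argument), and the same reduction of the countably-based property to separability of $\Aff(K)$ in both directions. The only substantive difference is that you spell out the directedness of $\{f : f\ll g\}$ — which the paper leaves implicit after establishing the characterization of $\ll$ — and your absorption argument there (pushing $h_i+\tfrac{\varepsilon}{2}$ into the approximating net and taking $h_j-\tfrac{\varepsilon}{4}$) is sound.
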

\begin{proof}
It is straightforward to verify that $\LAff(K)$ is a dcpo.
To verify the characterization of the way-below relation, let $f,g\in\LAff(K)$.
We first assume that $f\ll g$.
Apply \autoref{prp:LAffApproxByAff} to obtain an upward directed subset $D\subseteq \Aff(K)$ such that $g=\sup D$.
Set
\[
D' := \big\{ h-\varepsilon : h\in D, \varepsilon>0 \big\}.
\]
It is straightforward to show that $D'$ is upward directed and that $g=\sup D'$.
By definition of the way-below relation, we can choose $h'\in D'$ with $f\leq h'$.
Choose $h\in D$ and $\varepsilon>0$ such that $h'=h-\varepsilon$.
Then $f+\varepsilon\leq h\leq g$, as desired.

Conversely, let $h\in\Aff(K)$ and $\varepsilon>0$ with $f+\varepsilon\leq h\leq g$.
To verify $f\ll g$, let $D$ be an upward directed subset of $\LAff(K)$ with $g\leq\sup D$.
Given $x\in K$, we have
\[
h(x)-\varepsilon < h(x)\leq g(x)=\sup \big\{ g'(x):g'\in D \big\},
\]
which allows us to choose $g_x\in D$ with $h(x)-\varepsilon < g_x(x)$.
Using that $h$ is continuous and that $g_x$ is lower-semicontinuous, we can choose a neighborhood $V_x$ of $x$ such that $h-\varepsilon<g_x$ on $V_x$.
Use that $K$ is compact to choose $x_1,\ldots,x_n$ such that $K=\bigcup_{k=1}^nV_{x_k}$.
Since $D$ is upward directed, we can pick $g'\in D$ with $g_{x_1},\ldots,g_{x_n}\leq g'$.
Then $f\leq g'$, as desired.

Using that every element in $\LAff(K)$ is the supremum of an upward directed subset of $\Aff(K)$, see \autoref{prp:LAffApproxByAff}, and using the above characterization of the way-below relation, it follows that $\LAff(K)$ is a domain.

Using the description of the way-below relation, it follows that $\LAff(K)$ is countably based if and only if $\Aff(K)$ is separable (for the topology induced by the supremum-norm).
Assume that $K$ is metrizable.
Then the Banach algebra $C(K,\RR)$ of continuous functions $K\to\RR$ is separable.
Since $\Aff(K)$ is a closed subspace of $C(K,\RR)$, it is separable as well;
see \cite[Proposition~14.10, p.221]{Goo86GpsInterpolation}.

Conversely, assume that $\Aff(K)$ is separable.
Since the elements of $\Aff(K)$ separate the points of $K$, the family of sets of the form $f^{-1}((0,\infty])$, with $f$ ranging over a countable dense subset of $\Aff(K)$, forms a countable basis for the topology of $K$.
Hence, $K$ is second-countable and therefore metrizable.
\end{proof}


\begin{lma}
\label{prp:LAffRiesz}
Let $K$ be a compact, convex set.
Then the following are equivalent:
\begin{enumerate}
\item
$K$ is a Choquet simplex.
\item
$\LAff(K)$ is an inf-semilattice.
\item
$\LAff(K)$ satisfies Riesz interpolation.
\end{enumerate}
Moreover, if the above hold, then $\LAff(K)$ is inf-semilattice-ordered and we have
\begin{align}
\label{prp:LAffRiesz:eqInfAtExtr}
(f\wedge g)(\lambda)=f(\lambda)\wedge g(\lambda),
\end{align}
for all $f,g\in\LAff(K)$ and $\lambda\in\partial_e K$.
\end{lma}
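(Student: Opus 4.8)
The plan is to separate the purely order-theoretic equivalence $(2)\Leftrightarrow(3)$ from the genuine geometric content, the equivalence of these with $(1)$, and to obtain the latter by transporting the interpolation problem to the classical theory of representing measures on $K$.

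For $(2)\Leftrightarrow(3)$ I would invoke no geometry at all. By \autoref{prp:llInLAff}, $\LAff(K)$ is a domain, in particular a dcpo. As recalled in \autoref{pgr:interpolation}, an inf-semilattice always satisfies Riesz interpolation, and the converse holds for a dcpo: given $f,g\in\LAff(K)$, Riesz interpolation makes $\{h:h\le f,g\}$ upward directed, so directed completeness produces $f\wedge g$. Thus $(2)\Leftrightarrow(3)$, and it remains to prove $(1)\Leftrightarrow(3)$.

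For $(1)\Rightarrow(2)$ (hence $(1)\Rightarrow(3)$), assume $K$ is a simplex and fix $f,g\in\LAff(K)$. Since lower semicontinuous functions on the compact set $K$ are bounded below, $m:=\min(f,g)$ is lower semicontinuous and bounded below. I would build the candidate infimum from the unique maximal (boundary) representing measures $\mu_\lambda$ supplied by the simplex property, setting $h(\lambda):=\int m\,d\mu_\lambda$. Uniqueness forces $\lambda\mapsto\mu_\lambda$ to be affine (if $\lambda=\tfrac12(\lambda_1+\lambda_2)$ then $\tfrac12(\mu_{\lambda_1}+\mu_{\lambda_2})$ is maximal with barycenter $\lambda$, hence equals $\mu_\lambda$), so $h$ is affine; the barycentre formula for affine elements of $\LAff(K)$ gives $\int f\,d\mu_\lambda=f(\lambda)$ and likewise for $g$, whence $h\le f,g$; and $\mu_\lambda=\delta_\lambda$ for $\lambda\in\partial_eK$ yields $h(\lambda)=m(\lambda)$ there. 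Lower semicontinuity of $h$ I would obtain by writing it as a supremum of functions $\lambda\mapsto\int\varphi\,d\mu_\lambda$ over continuous $\varphi\le m$ and appealing to the envelope calculus for simplices, citing \cite{Alf71CpctCvxSets, Phe01LNMChoquet} to sidestep measurability pathologies when $K$ is not metrizable. With $h\in\LAff(K)$ in hand, \autoref{prp:LAffOrderExtr} identifies it as the largest element below both $f$ and $g$, so $h=f\wedge g$, and this simultaneously delivers \eqref{prp:LAffRiesz:eqInfAtExtr}. Distributivity $a+(f\wedge g)=(a+f)\wedge(a+g)$ then follows by evaluating both sides at extreme points through \eqref{prp:LAffRiesz:eqInfAtExtr} and applying \autoref{prp:LAffOrderExtr} once more.

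For $(3)\Rightarrow(1)$ I would argue by contraposition: assuming $K$ is not a simplex, I exhibit a failure of Riesz interpolation already among continuous data. By the classical characterization that a simplex is exactly a compact convex set whose generated cone is a lattice cone, equivalently for which $\Aff(K)$ has Riesz interpolation \cite{Alf71CpctCvxSets}, non-simplicity provides a point with two convex decompositions admitting no common refinement. Pulling this back along a separating family of continuous affine functions, one finds $a_1,a_2,b_1,b_2\in\Aff(K)\subseteq\LAff(K)$ with $a_i\le b_j$ for which any affine interpolant would be forced to take inconsistent values at a point represented as a midpoint in two different ways (the model being the four vertices of a square). Since \emph{every} element of $\LAff(K)$ is affine, this obstruction rules out an interpolant in $\LAff(K)$, not just in $\Aff(K)$, so $(3)$ fails. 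The main obstacle is the verification in $(1)\Rightarrow(2)$ that the candidate $h$ is at once affine and lower semicontinuous while possibly attaining $\infty$: affineness is precisely where the simplex hypothesis (uniqueness of maximal measures) enters, and lower semicontinuity for non-metrizable $K$ cannot be checked pointwise and must be routed through Choquet theory. The remaining ingredients—the equivalence $(2)\Leftrightarrow(3)$, the extreme-point formula, distributivity, and the contrapositive—are comparatively routine once these classical inputs are secured.
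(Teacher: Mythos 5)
Your reduction of (2)$\Leftrightarrow$(3) to the dcpo structure of $\LAff(K)$ is exactly the paper's argument, and your construction of the infimum for (1)$\Rightarrow$(2) via the unique maximal representing measures $\mu_\lambda$ is a legitimately different route: the paper instead quotes Alfsen's Theorem~II.3.8 (a compact convex set is a simplex if and only if the upper semicontinuous affine $[-\infty,\infty)$-valued functions form a sup-semilattice, which under $f\mapsto -f$ is the statement that $\LAff(K)$ is an inf-semilattice), and then obtains \eqref{prp:LAffRiesz:eqInfAtExtr} by identifying $f\wedge g$ with the lower envelope of the pointwise minimum and applying \cite[Proposition~I.4.1]{Alf71CpctCvxSets}. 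One step of your construction is wrong as stated, though: for a merely continuous $\varphi\leq m$, the map $\lambda\mapsto\int\varphi\,d\mu_\lambda$ need \emph{not} be lower semicontinuous, since $\lambda\mapsto\mu_\lambda$ is continuous only for Bauer simplices; on a non-Bauer simplex take a net of extreme points $x_j\to p$ with $p$ not extreme, so that $\int\varphi\,d\mu_{x_j}=\varphi(x_j)\to\varphi(p)$, which can be strictly smaller than $\int\varphi\,d\mu_p$ because $\mu_p\neq\delta_p$. The repair is to take $\varphi$ continuous and \emph{concave}, for which $\int\varphi\,d\mu_\lambda=\check{\varphi}(\lambda)$ is a lower envelope, hence a supremum of continuous affine functions and so lower semicontinuous; but then you need that a bounded-below, lower semicontinuous, concave function is the directed supremum of continuous \emph{concave} minorants (continuous affine minorants do not suffice, since their supremum is the lower envelope, which lies strictly below $\min(f,g)$ away from the extreme boundary in general). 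That approximation is where the real work sits, and it is essentially the content of the classical results the paper cites rather than a routine remark.

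The genuine gap is in (3)$\Rightarrow$(1). That $\Aff(K)$ fails Riesz interpolation when $K$ is not a simplex is classical, but this does not formally transfer to $\LAff(K)$: the set of admissible interpolants is strictly larger in $\LAff(K)$ even among finite functions, because a bounded lower semicontinuous affine function need not be continuous, so a quadruple with no interpolant in $\Aff(K)$ could a priori admit one in $\LAff(K)$. Your claim that the failure can always be witnessed by a configuration whose obstruction is ``purely affine''---four points with forced values and a point written as a midpoint in two ways, as for the square---is precisely what needs proof; the sentence about ``pulling this back along a separating family of continuous affine functions'' is not an argument, and I see no easy general reduction: non-simplicity gives a failure of Riesz refinement in the cone generated by $K$, which by itself forces no equalities of values of an interpolant at particular points. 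Since Alfsen's Theorem~II.3.8 is an equivalence, the clean fix is to use it for (2)$\Rightarrow$(1) (as the paper does), at which point your contrapositive construction becomes unnecessary; as written, your sketch of (3)$\Rightarrow$(1) amounts to asserting the hard direction of that theorem without proving it.
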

\begin{proof}
By \cite[Theorem~II.3.8, p.89]{Alf71CpctCvxSets}, $K$ is a Choquet simplex if and only if the upper semicontinuous, affine functions $K\to[-\infty,\infty)$ form a sup-semilattice.
The latter is equivalent to $\LAff(K)$ being an inf-semilattice, which shows the equivalence of~(1) and~(2).
A dcpo is an inf-semilattice if and only if it satisfies Riesz refinement;
see \autoref{pgr:interpolation}.
Using that $\LAff(K)$ is a dcpo, we obtain that~(2) and~(3) are equivalent.

Assume that statements~(1)-(3) are satisfied.
To show \eqref{prp:LAffRiesz:eqInfAtExtr}, let $f,g\in\LAff(K)$.
Consider the function $h\colon K\to(-\infty,\infty]$ given by $h(\lambda):=f(\lambda)\wedge g(\lambda)$, for $\lambda\in K$.
Then $h$ is lower semicontinuous and concave.
Let $\check{h}$ be the lower envelope of $h$;
see \cite[p.4]{Alf71CpctCvxSets}.
It follows from \cite[Theorem~II.3.8]{Alf71CpctCvxSets} that $\check{h}$ is lower semicontinuous and affine.
This shows that $\check{h}$ is the infimum of $f$ and $g$ in $\LAff(K)$.

By \cite[Proposition~I.4.1]{Alf71CpctCvxSets}, we have $\check{k}(\lambda)=k(\lambda)$ for every lower semicontinuous function $k\colon K\to(-\infty,\infty]$ and every $\lambda\in\partial_e K$.
Applied for $h$, we obtain
\[
(f\wedge g)(\lambda)
= \check{h}(\lambda)
= h(\lambda)
= f(\lambda)\wedge g(\lambda),
\]
for all $\lambda\in\partial_e K$. 

To show that $\LAff(K)$ is inf-semilattice-ordered, let $f,g,h\in\LAff(K)$.
Using \eqref{prp:LAffRiesz:eqInfAtExtr} at the first and last step, we obtain
\begin{align*}
(f+(g\wedge h))(\lambda)
&= f(\lambda)+g(\lambda)\wedge h(\lambda) \\
&= (f(\lambda)+g(\lambda))\wedge(f(\lambda)+h(\lambda))
= ((f+g)\wedge(f+h))(\lambda),
\end{align*}
for all $\lambda\in\partial_eK$.
Using \autoref{prp:LAffOrderExtr}, we deduce $f+(g\wedge h)=(f+g)\wedge(f+h)$.
\end{proof}

\begin{lma}
\label{prp:FSChoquet}
Let $S$ be a simple, stably finite \CuSgp{} satisfying \axiomO{5} and \axiomO{6}, and let $u\in S$ be nonzero such that $u\ll a$ for some $a\in S$, and such that $\widehat{u}$ is continuous (for example, $u$ is nonzero and compact).
Then $K:=F_{u\to 1}(S)$ is a nonempty Choquet simplex.
If $S$ is countably based, then $K$ is metrizable.
\end{lma}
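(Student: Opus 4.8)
The plan is to realize $K$ as a compact base of the cone of those functionals that are finite on $u$, and then to read off the simplex property from the lattice structure of this cone furnished by \axiomO{5} and \axiomO{6}.

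I would first dispose of the topological points. Since $S$ is stably finite and $u\ll a$ for some $a\in S$, the element $u$ is finite, so $K=F_{u\mapsto 1}(S)$ is nonempty by the criterion of \cite[Theorem~5.2.6]{AntPerThi18:TensorProdCu}. As recorded in \autoref{pgr:CuFurther}, $K$ is convex, and it is closed in $F(S)$ precisely because $\widehat{u}$ is continuous; since $F(S)$ carries a compact Hausdorff topology, $K$ is compact. When $S$ is countably based, with countable basis $B$, the convergence in $F(S)$ is governed by the countably many conditions attached to the pairs $a'\ll a$ with $a',a\in B$, so that $F(S)$ is second countable, hence metrizable, and therefore so is the closed subset $K$.

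For the substance of the statement I would pass to the subcone $C:=\{\lambda\in F(S):\lambda(u)<\infty\}$. Because $S$ is simple, every finite element $b\in S$ satisfies $b\leq nu$ for some $n$ (as $\widetilde{b}\leq\infty u$ whenever $b\ll\widetilde{b}$, and $\infty u$ is the largest element), so every $\lambda\in C$ is finite on all finite elements. By \axiomO{2} each element of $S$ is the supremum of an increasing sequence of finite elements, and functionals preserve such suprema; hence a functional in $C$ is determined by its values on finite elements, and $C$ is cancellative. The assignment $\lambda\mapsto\lambda(u)$ then exhibits $K$ as a base of $C$. By the cited results of Robert, \cite[Proposition~2.2.3, Theorem~4.1.2]{Rob13Cone}, the cone $F(S)$ is algebraically inf-semilattice-ordered; since $(\lambda\wedge\mu)(u)\leq\lambda(u)$, infima of elements of $C$ remain in $C$, so $C$ is itself a cancellative, algebraically ordered inf-semilattice in which addition distributes over $\wedge$. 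By the classical theory of lattice-ordered commutative monoids, such a $C$ embeds into its group of differences $C-C$ as the positive cone of a lattice-ordered abelian group (this is classical; see, e.g., \cite{Goo86GpsInterpolation}); in other words, $C$ is a lattice cone.

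It then remains to invoke the classical characterization of Choquet simplices: a compact convex set is a simplex exactly when the cone it generates is a lattice in its intrinsic order (Choquet--Kendall; see \cite{Alf71CpctCvxSets} and \cite{Phe01LNMChoquet}). Applied to the lattice cone $C$ with compact base $K$, this shows that $K$ is a Choquet simplex, completing the argument. I expect the main obstacle to be precisely this lattice-cone step: one must verify that the abstract algebraic order on $C$ agrees with the order induced by $C$ as the positive cone inside the real vector space $C-C$, that $K$ remains compact for the associated vector-space topology so that the Choquet--Kendall criterion applies verbatim, and---most delicately---that Robert's inf-semilattice structure on $F(S)$ restricts to a genuine \emph{lattice} cone on $C$ rather than a mere inf-semilattice. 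Here cancellativity of $C$, established above from simplicity together with stable finiteness, is exactly what legitimizes the passage to $C-C$ and the formation of suprema via $\lambda\vee\mu=\lambda+\mu-(\lambda\wedge\mu)$; without it the cone--simplex dictionary would not be available.
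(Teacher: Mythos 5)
Your route is essentially the paper's own. The paper likewise realizes $K$ as a compact base of a cancellative subcone of $F(S)$ --- there taken to be $C=F(S)\setminus\{\lambda_\infty\}$, which under the present hypotheses coincides with your $\{\lambda:\lambda(u)<\infty\}$, since simplicity together with $u\ll a$ forces $\lambda(u)<\infty$ for every $\lambda\neq\lambda_\infty$ --- obtains the algebraically ordered inf-semilattice structure from \cite[Proposition~2.2.3, Theorem~4.1.2]{Rob13Cone}, quotes cancellativity of $C$ from the remarks above \cite[Proposition~3.2.3]{Rob13Cone}, deduces Riesz refinement (for cancellative, algebraically ordered monoids this is equivalent to interpolation, and an inf-semilattice interpolates), and then applies \cite[Proposition~II.3.3, p.85]{Alf71CpctCvxSets}. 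Your passage to the group of differences and the Choquet--Kendall criterion is just a hands-on version of that last citation, and your direct proof of cancellativity replaces the reference to Robert; nonemptiness via \cite[Theorem~5.2.6]{AntPerThi18:TensorProdCu} and the metrizability sketch also match the paper's treatment (the paper argues as in \autoref{prp:llInLAff}).

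One intermediate claim, however, is false as stated: it is \emph{not} true that every finite element $b\in S$ satisfies $b\leq nu$, nor that functionals in $C$ are finite on all finite elements. Here ``finite'' means $b\neq b+c$ for all $c\neq 0$; for instance in $S=\LAff(K)_{++}\cup\{0\}$ with $u$ the constant function $1$ (see \autoref{prp:LAffCu}), a chisel $1+\sigma_\lambda$ is finite but unbounded, hence below no $nu$, and evaluation at any $\mu\neq\lambda$ lies in $C$ yet takes the value $\infty$ on it. Your own parenthetical contains the correct statement and proof: every $b$ that is \emph{compactly contained in some element}, $b\ll\widetilde{b}$, satisfies $b\leq nu$ (since $\widetilde{b}\leq\infty u$ by simplicity), so $\lambda(b)<\infty$ for all $\lambda\in C$; by \axiomO{2} such elements sup-generate $S$, which is exactly what the cancellation argument needs ($\lambda+\varphi=\lambda+\psi$ gives $\varphi(b)=\psi(b)$ on this class by cancelling the finite value $\lambda(b)$, and then everywhere by taking suprema). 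The point is that in a stably finite \CuSgp{} compactly dominated elements are finite but not conversely; replace ``finite elements'' by ``elements compactly contained in some element'' throughout that step and your proof stands.
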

\begin{proof}
By Proposition~2.2.3 and Theorem~4.1.2 in \cite{Rob13Cone}, $F(S)$ is an algebraically ordered, complete lattice.
Let $\lambda_\infty\in F(S)$ be the largest functional, which satisfies $\lambda_\infty(a)=\infty$ for all nonzero $a\in S$.
Set $C:=F(S)\setminus\{\lambda_\infty\}$.
Then $C$ is an algebraically ordered, locally compact cone.
Moreover, $C$ is cancellative by the remarks above Proposition~3.2.3 in \cite{Rob13Cone}.
(In the notation used there, we have $C=F_S(S)$.)
It follows that $C$ satisfies Riesz refinement.

Using that $\widehat{u}$ is continuous, we deduce that $K$ is a closed, convex subset of $F(S)$.
Let $\lambda_0$ be the zero functional.
Given $\lambda\in F(S)$ with $\lambda\neq\lambda_0,\lambda_\infty$, we have $\lambda(u)\neq 0$ (using that $u$ is full) and $\lambda(u)\neq\infty$ (using that there exists $a\in S$ with $u\ll a$).
Hence, the functional $\tfrac{1}{\lambda(u)}\lambda$ belongs to $K$ and $\lambda=\lambda(u)\cdot\tfrac{1}{\lambda(u)}\lambda$.
It follows that $K$ is a compact base of $C$.
Since $C$ satisfies Riesz refinement, it follows from \cite[Proposition~II.3.3, p.85]{Alf71CpctCvxSets} that $K$ is a Choquet simplex.

If $S$ is countably based, it follows as in the proof of \autoref{prp:llInLAff} that $K$ is metrizable.
\end{proof}

\begin{prp}
\label{prp:LAffCu}
Let $K$ be a metrizable, compact, convex set.
Consider $S:=\LAff(K)_{++}\cup\{0\}$.
Then $S$ is a weakly cancellative, countably based, simple, stably finite \CuSgp{} satisfying \axiomO{5}. 
Further, the following are equivalent:
\begin{enumerate}
\item
$K$ is a Choquet simplex.
\item
$S$ is semilattice-ordered.
\item
$S$ satisfies \axiomO{6+}. (See \autoref{dfn:O6+}.)
\item
$S$ satisfies \axiomO{6}.
\end{enumerate}
\end{prp}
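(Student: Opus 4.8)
The plan is to first establish the ``basic'' assertion that $S:=\LAff(K)_{++}\cup\{0\}$ is a weakly cancellative, countably based, simple, stably finite \CuSgp{} satisfying \axiomO{5}, and then to prove the four-fold equivalence, using \autoref{prp:LAffRiesz} (``$K$ is a Choquet simplex $\Leftrightarrow$ $\LAff(K)$ is an inf-semilattice'') as the pivot. For the basic part, pointwise addition and order make $S$ a \pom{} with neutral element $0$ and positive order. Since the pointwise supremum of an increasing sequence of affine functions is again affine (monotone sequences commute with forming affine combinations), \axiomO{1} holds and $S$ is closed under suprema of increasing sequences inside $\LAff(K)$; consequently the way-below relation of $S$ is the restriction of that of $\LAff(K)$, as described in \autoref{prp:llInLAff}. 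The one genuinely new point is positivity: by Bauer's minimum principle (\autoref{prp:BauerMinPrinciple}) every $f\in\LAff(K)_{++}$ satisfies $\inf f(K)=\min f(\partial_eK)>0$, so $f$ is bounded below by a positive constant $\delta$. Because $\LAff(K)$ is a domain (\autoref{prp:llInLAff}), the set $\{x:x\ll f\}$ is upward directed with supremum $f$; intersecting it with ``$\geq\delta/2$'' gives a cofinal directed set lying in $\LAff(K)_{++}$, from which a $\ll$-increasing sequence with supremum $f$ is extracted using countable basedness. This yields \axiomO{2}, and \axiomO{3}, \axiomO{4} are immediate from the description of $\ll$ and the additivity of suprema of increasing sequences.

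For \axiomO{5}, given $f'\ll f$, $g'\ll g$ and $f+g\le c$, I would pick an affine witness $h$ and $\varepsilon>0$ with $f'+\varepsilon\le h\le f$, set $\tilde h:=h-\varepsilon/2$ (so $f'\le\tilde h\le f$ and $f-\tilde h\ge\varepsilon/2$), and put $x:=c-\tilde h$. Then $x$ is lower semicontinuous and affine with $x\ge f-\tilde h\ge\varepsilon/2>0$, so $x\in S$; a direct check gives $f'+x\le c\le f+x$, while $g\le c-f\le x$ yields $g'\ll g\le x$. Weak cancellation I would verify through the criterion ``$a+x\le b+x'$ and $x'\ll x$ imply $a\le b$'' from \autoref{pgr:CuFurther}: here $x'\ll x$ forces $x'$ to be dominated by a continuous, hence bounded, affine function, so one cancels pointwise where $x<\infty$ and argues separately where $x=\infty$. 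Countable basedness follows from the separability of $\Aff(K)$ (metrizability of $K$), exactly as in \autoref{prp:llInLAff}. Simplicity holds because the constant function $\infty$ is the largest element of $S$ and $\infty f=\sup_n nf=\infty$ for every nonzero $f$ (again using $f\ge\delta>0$); stable finiteness is then automatic for a weakly cancellative simple \CuSgp{} (\autoref{pgr:CuFurther}).

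For the equivalences, the pivot \autoref{prp:LAffRiesz} makes (1) equivalent to ``$\LAff(K)$ is inf-semilattice-ordered''. For (1)$\Rightarrow$(2): if $f,g\in\LAff(K)_{++}$, the infimum $f\wedge g$ formed in $\LAff(K)$ agrees with the pointwise minimum on $\partial_eK$ by \eqref{prp:LAffRiesz:eqInfAtExtr}, so $\inf(f\wedge g)=\min_{\partial_eK}(f\wedge g)>0$ by Bauer's principle; hence $f\wedge g\in S$, and as every lower bound in $S$ is one in $\LAff(K)$, this is also the infimum in $S$. Distributivity is inherited, so $S$ is inf-semilattice-ordered. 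For (2)$\Rightarrow$(1): given $p,q\in\LAff(K)$, shift by a constant $N$ so that $f:=p+N$ and $g:=q+N$ lie in $S$, and let $m:=f\wedge_S g$. For any $\LAff(K)$-lower bound $r$ of $f,g$ and small $s\in(0,1)$, the convex combination $(1-s)m+sr$ is affine, lower semicontinuous, and bounded below by a positive constant (since $\inf m>0$ and $\inf r>-\infty$), hence lies in $S$ and is $\le f,g$; therefore $(1-s)m+sr\le m$, which forces $r\le m$. Thus $m$ is the infimum in $\LAff(K)$ and, shifting back, $\LAff(K)$ is inf-semilattice-ordered, giving (1). Finally, (2)$\Rightarrow$(3) and (3)$\Rightarrow$(4) are the easy ones: an inf-semilattice-ordered \CuSgp{} has honest infima compatible with addition, which yields the refined decomposition demanded by \axiomO{6+} (\autoref{dfn:O6+}), and \axiomO{6+} strengthens \axiomO{6} by definition.

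The hard part will be (4)$\Rightarrow$(1). By the pivot it suffices to show that \axiomO{6} forces $\LAff(K)$ to satisfy Riesz interpolation; after shifting by a constant this amounts to interpolating inside $S$, that is, to proving that for $f,g\in S$ the set of common lower bounds is upward directed, its supremum then being the infimum since $S$ is a countably based domain (\autoref{prp:ctblBasedDCPO}). Given $x_1,x_2\le y_1,y_2$, I would first use \axiomO{5} to form an ``almost complement'' $d$ of $x_2$ in $y_1$ (so that $x_2'+d\le y_1\le x_2+d$ for a chosen $x_2'\ll x_2$), then feed the relation $x_1'\ll x_1\le x_2+d$ into \axiomO{6} to split $x_1'\le e+w$ with $e\le x_2$ and $w\le d$, and assemble a candidate interpolant of the form $x_2'+w$. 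The subtlety—and the reason plain \axiomO{6} is insufficient for general \CuSgp{s}, which is precisely what motivates \axiomO{6+}—is that the upper bound ($\le y_1,y_2$) wants the ``small'' representatives $x_i'$ whereas the lower bound ($\ge x_1,x_2$) wants the ``large'' ones $x_i$; reconciling the two requires passing to the supremum over a cofinal family of approximations and using weak cancellation to align the pieces. I expect this approximate-to-exact upgrade, carried out within the concrete order structure of $\LAff(K)$ (equivalently, by transporting \axiomO{6} to the Riesz decomposition property of the ordered vector space $\Aff(K)$ and invoking the classical characterization of simplices), to be the main obstacle; the alternative of arguing the contrapositive—exhibiting on a non-simplex such as a square explicit data violating \axiomO{6}—is also viable and may well be cleaner.
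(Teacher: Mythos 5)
Most of your proposal tracks the paper closely and is sound: the verification of the $\CatCu$-axioms, the \axiomO{5} construction $x:=c-\tilde h$ with an affine witness, weak cancellation by cancelling finite-valued (because $\ll$-dominated-by-continuous) functions, simplicity from strict positivity, (1)$\Rightarrow$(2) via Bauer's minimum principle, and (2)$\Rightarrow$(3) via honest infima are all essentially the paper's arguments. (For (2)$\Rightarrow$(3) you should actually record the computation the paper does: with $e:=a\wedge b$ and $f:=a\wedge c$, distributivity gives $e+f=(2a)\wedge(a+b)\wedge(a+c)\wedge(b+c)\geq a$; this is the one place where semilattice-\emph{ordered}, not merely inf-semilattice, is used.) Your direct (2)$\Rightarrow$(1) via convex combinations $(1-s)m+sr$ is correct, but it is redundant once the cycle (1)$\Rightarrow$(2)$\Rightarrow$(3)$\Rightarrow$(4)$\Rightarrow$(1) closes.

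The genuine gap is (4)$\Rightarrow$(1), which you do not prove. Your plan---derive Riesz interpolation in $\LAff(K)$ from \axiomO{6} by showing the set of common lower bounds is upward directed---stalls exactly where you say it does, and the specific candidate fails concretely: from $x_2'+d\leq y_1\leq x_2+d$ and the \axiomO{6}-splitting $x_1'\leq e+w$ with $e\leq x_1,x_2$ and $w\leq x_1,d$, the element $x_2'+w$ satisfies $x_2'+w\leq y_1$ but nothing gives $x_2'+w\leq y_2$ (a sum of two elements each below $y_2$ need not be below $y_2$), nor $x_1\leq x_2'+w$ (you only get $x_1'\leq x_2+w$, with the wrong representatives on both sides). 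Your fallback---exhibiting explicit \axiomO{6}-violating data on $\LAff$ of a square---is likewise not carried out. The paper avoids all of this by dualizing: with $u:=1$ the constant function, $\widehat{u}$ is continuous and $F_{u\mapsto 1}(S)$ is affinely homeomorphic to $K$ (the normalized functionals on $S$ are precisely the states of $\Aff(K)$, i.e.\ point evaluations), and \autoref{prp:FSChoquet}---valid for \emph{any} simple, stably finite $\CatCu$-semigroup satisfying \axiomO{5} and \axiomO{6}, resting on Robert's results that $F(S)$ is an algebraically ordered complete lattice whose subcone of non-infinite functionals is cancellative with compact base and Riesz refinement---then shows $K$ is a Choquet simplex. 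Since the first part of the proposition already establishes all hypotheses of \autoref{prp:FSChoquet} for your $S$, the implication (4)$\Rightarrow$(1) is a two-line consequence; no interpolation construction inside $\LAff(K)$ is needed, and you should replace your sketch by this dualization.
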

\begin{proof}
It follows from \autoref{prp:llInLAff} that $S$ is a countably based \CuSgp.
Every nonzero element in $S$ is a strictly positive function on $K$ and therefore full in $S$, which shows that $S$ is simple.
For $a,b\in S$ we have $a\ll b$ in $S$ if and only if $a\ll b$ in $\LAff(K)$.
Using that finite-valued functions in $\LAff(K)$ can be canceled, one shows that $S$ is weakly cancellative and stably finite.

To verify that $S$ satisfies \axiomO{5}, let $a',a,b',b,c\in S$ satisfy
\[
a+b\leq c, \andSep a'\ll a, \andSep b'\ll b.
\]
We need to find $x\in S$ satisfying $a'+x\leq c\leq a+x$ and $b'\ll x$.
If $a'=0$, then $x:=c$ has the desired properties.
If $a'\neq 0$, apply \autoref{prp:llInLAff} to obtain $f\in\Aff(K)_{++}$ and $\varepsilon>0$ such that
\[
a'\leq f, \andSep f+\varepsilon\leq a.
\]
Define $x\colon K\to[0,\infty]$ by $x(\lambda):=c(\lambda)-f(\lambda)$, for $\lambda\in K$.
Since $f\in\Aff(K)$, we have $x\in\LAff(K)$.
It follows that $x$ has the desired properties.

If $K$ is a Choquet simplex, then $\LAff(K)$ is semilattice-ordered by \autoref{prp:llInLAff}, which implies that $S$ is semilattice-ordered as well.
Assume that $S$ is semilattice-ordered.
To verify \axiomO{6+}, let $a,b,c,x',x,y',y\in S$ satisfy
\[
a \leq b+c, \andSep x'\ll x \leq a,b, \andSep y'\ll y \leq a,c.
\]
Set $e:=a\wedge b$ and $f:=a\wedge c$.
We have $x'\ll e\leq a,b$ and $y'\ll f\leq a,c$.
Moreover,
\begin{align*}
e+f
= (a\wedge b)+(a\wedge c)
&= ((a\wedge b)+a)\wedge((a\wedge b)+c) \\
&= (2a)\wedge(a+b)\wedge(a+c)\wedge(b+c)
\geq a,
\end{align*}
which shows that $e$ and $f$ have the desired properties.

It is clear that \axiomO{6+} implies \axiomO{6}.
Lastly, let us assume that $S$ satisfies \axiomO{6}.
Set $u:=1$, the constant function of value $1$ on $K$.
Then $\widehat{u}$ is continuous and $F_{u\mapsto 1}(S)$ is affinely homeomorphic to $K$.
It follows from \autoref{prp:FSChoquet} that $K$ is a Choquet simplex.
\end{proof}

\section{Edwards' condition}
\label{sec:Edwards}

Throughout this section, we let $S$ be a simple, stably finite \CuSgp{} satisfying \axiomO{5} and \axiomO{6}, we let $u\in S$ be a compact, full element, and we set $K:=F_{u\to 1}(S)$, the compact, convex set of normalized functionals.
It follows from \autoref{prp:FSChoquet} that $K$ is a nonempty Choquet simplex.

\begin{dfn}
\label{dfn:Edwards}
Given $\lambda\in\partial_e K$, we say that $S$ satisfies \emph{Edwards' condition for $\lambda$} if
\begin{align}
\label{dfn:Edwards:eq}
\min\{\lambda(a),\lambda(b)\} = \sup \big\{ \lambda(c) : c\leq a,b \big\},
\end{align}
for all $a,b\in S$.
If this holds for all $\lambda\in\partial_e(K)$, then we say that $S$ satisfies \emph{Edward's condition for $\partial_e(K)$}.
\end{dfn}

\begin{rmks}
\label{rmk:Edwards}
(1)
There is natural way to formulate Edwards' condition for arbitrary functionals.
This is especially relevant for studying ranks of operators in non-simple or non-unital \ca{s}.
We will pursue this in forthcoming work.

(2)
Since $K$ is a Choquet simplex, we may apply \eqref{prp:LAffRiesz:eqInfAtExtr} to identify the left hand side in \eqref{dfn:Edwards:eq} with $(\widehat{a}_{|K}\wedge\widehat{b}_{|K})(\lambda)$.

(3)
Let $\lambda\in\partial_e K$.
Then $S$ satisfies Edwards' condition for $\lambda$ if for all $a,b\in S$ and $t\in[0,\infty)$ satisfying $t < \lambda(a),\lambda(b)$, there exists $c\in S$ such that
\[
t<\lambda(c), \andSep c\leq a,b.
\]
Similarly, by \autoref{prp:EdwardsDual}, if $S$ satisfies Edwards' condition for $\lambda$, then for all $a,b\in S$ and $t\in(0,\infty]$ satisfying $\lambda(a),\lambda(b) < t$, there exists $d\in S$ such that
\[
a,b\leq d, \andSep \lambda(d)<t.
\]
These formulations are similar to condition~(2) considered by Edwards in \cite{Edw69UniformApproxAff}, and this is the reason for the terminology in \autoref{dfn:Edwards}.
\end{rmks}

The statement of the following result can be considered as a dual version of Edwards' condition. It will be used in \autoref{prp:realizeChisel}.

\begin{prp}
\label{prp:EdwardsDual}
Let $\lambda\in\partial_e K$ be such that $S$ satisfies Edwards' condition for $\lambda$.
Then
\begin{align*}
\max\{\lambda(a),\lambda(b)\} = \inf \big\{ \lambda(d) : a,b\leq d \big\},
\end{align*}
for all $a,b\in S$.
\end{prp}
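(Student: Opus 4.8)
The plan is to prove the nontrivial inequality
\[
\inf \big\{ \lambda(d) : a,b\leq d \big\} \leq \max\{\lambda(a),\lambda(b)\},
\]
the reverse inequality being immediate, since any $d$ with $a,b\leq d$ satisfies $\lambda(d)\geq\lambda(a),\lambda(b)$ by monotonicity of $\lambda$. (Note also that the infimum is taken over a nonempty set, as $a+b$ is always a common upper bound.) First I would dispose of the case $\lambda(a)=\infty$ or $\lambda(b)=\infty$: then the right-hand side is $\infty$, while the common upper bound $d:=a+b$ has $\lambda(d)=\lambda(a)+\lambda(b)=\infty$, so both sides equal $\infty$. Thus I may assume $\lambda(a),\lambda(b)<\infty$, whence $\lambda(c)<\infty$ for every $c\leq a,b$ and all subtractions below are finite.

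The core idea is to manufacture common \emph{upper} bounds of $a$ and $b$ with controlled value under $\lambda$ out of common \emph{lower} bounds, thereby converting Edwards' condition (a statement about lower bounds) into its dual. Given a common lower bound $c\leq a,b$ and any $c'\ll c$, I would apply \axiomO{5} to the data $c'\ll c\leq a$ (with the second summand in the axiom taken to be $0$, exactly as in the proof of \autoref{prp:charAlmUnp}) to obtain an almost-complement $z_a$ with $c'+z_a\leq a\leq c+z_a$, and likewise a $z_b$ with $c'+z_b\leq b\leq c+z_b$. Setting $d:=c+z_a+z_b$ produces a genuine common upper bound, since $a\leq c+z_a\leq d$ and $b\leq c+z_b\leq d$. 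This $d$ plays the role of an approximate supremum $a\vee b$, reflecting the modular identity $a\vee b=a+b-(a\wedge b)$ at the level of $\lambda$.

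It then remains to estimate $\lambda(d)$. Additivity of $\lambda$ gives $\lambda(d)=\lambda(c)+\lambda(z_a)+\lambda(z_b)$, while $c'+z_a\leq a$ and $c'+z_b\leq b$ yield $\lambda(z_a)\leq\lambda(a)-\lambda(c')$ and $\lambda(z_b)\leq\lambda(b)-\lambda(c')$, so that
\[
\lambda(d)\leq \lambda(a)+\lambda(b)+\lambda(c)-2\lambda(c').
\]
Writing $c$ as the supremum of a $\ll$-increasing sequence (axiom \axiomO{2}) and using that $\lambda$ preserves suprema of increasing sequences, I can choose $c'\ll c$ with $\lambda(c')$ arbitrarily close to $\lambda(c)$, driving the bound down to $\lambda(a)+\lambda(b)-\lambda(c)$. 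Finally, Edwards' condition for $\lambda$ supplies common lower bounds $c$ with $\lambda(c)$ arbitrarily close to $\min\{\lambda(a),\lambda(b)\}$; since $\lambda(a)+\lambda(b)-\min\{\lambda(a),\lambda(b)\}=\max\{\lambda(a),\lambda(b)\}$, a two-parameter $\varepsilon$-argument (first choosing $c$ via Edwards, then $c'$ via \axiomO{2}) delivers the claimed inequality.

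I expect the main obstacle to be the construction and control of the upper bound $d$: the crux is the use of \axiomO{5} to produce the almost-complements $z_a,z_b$, together with the bookkeeping that keeps $\lambda(d)$ close to $\max\{\lambda(a),\lambda(b)\}$ through the successive passages to the limit in $c'$ and in $c$. Isolating the case $\lambda(a)=\infty$ or $\lambda(b)=\infty$ at the outset is what makes the finite subtractions legitimate throughout the estimate.
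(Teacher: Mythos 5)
Your proof is correct and follows essentially the same route as the paper's: Edwards' condition supplies a common lower bound $c$ with $\lambda(c)$ near $\min\{\lambda(a),\lambda(b)\}$, axiom \axiomO{5} applied to $c'\ll c\leq a$ and $c'\ll c\leq b$ yields the two almost-complements, and the candidate $d:=c+z_a+z_b$ is estimated by cancelling the finite value $\lambda(c')$. The only differences are cosmetic bookkeeping: your symmetric bound $\lambda(d)\leq\lambda(a)+\lambda(b)+\lambda(c)-2\lambda(c')$ replaces the paper's fixed-$t$, fixed-$\varepsilon$ choices, and incidentally makes the paper's separate treatment of the case $\lambda(a)=0$ (via simplicity) unnecessary.
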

\begin{proof}
The inequality `$\leq$' is clear. 
To show the converse, let $a,b\in S$.
Without loss of generality, we may assume that $\lambda(a)\leq\lambda(b)<\infty$.
Let $t\in(0,\infty)$ satisfy $\lambda(b) < t$.
We need to find $d\in S$ such that
\[
a,b\leq d, \andSep \lambda(d)<t.
\]

If $\lambda(a)=0$, then $a=0$ by simplicity, and thus $d:=b$ has the desired properties.
Assume that $0<\lambda(a)$.
Choose $\varepsilon>0$ such that
\[
\lambda(b) < t-2\varepsilon,
\]
and choose $s\in[0,\infty)$ such that
\[
s < \lambda(a) \leq s+\varepsilon.
\]
Applying Edwards' condition, we obtain $c\in S$ such that
\[
s<\lambda(c), \andSep c\leq a,b.
\]
Choose $c'\ll c$ such that
\[
s<\lambda(c'), \andSep \lambda(c)\leq\lambda(c')+\varepsilon.
\]
Applying \axiomO{5} for $c'\ll c\leq a$ and $c'\ll c\leq b$, we obtain $x,y\in S$ such that
\[
c'+x \leq a \leq c+x, \andSep c'+y \leq b \leq c+y.
\]
Set $d:=c+x+y$.
We have
\[
\lambda(c') + \lambda(x) \leq \lambda(a) \leq s + \varepsilon \leq \lambda(c') +\varepsilon.
\]
Since $c'\ll c\leq\infty u$, and $\lambda(u)=1$, we have $\lambda(c')<\infty$, which allows us to cancel it from the above inequality to obtain $\lambda(x)\leq\varepsilon$.
We deduce
\[
\lambda(d)
= \lambda(c)+\lambda(x)+\lambda(y)
\leq \lambda(c')+2\varepsilon+\lambda(y)
\leq \lambda(b)+2\varepsilon
< t.
\]
Further, we have
\[
a \leq c+x \leq c+x+y = d, \andSep
b \leq c+y \leq c+y+x = d,
\]
which shows that $d$ has the desired properties.
\end{proof}

Our next goal is to verify that Cuntz semigroups of \ca{s} satisfy Edwards' condition;
see \autoref{prp:EdwardsCa}.
A main ingredient in the proof are \AW-completions as developed in \cite{Haa14Quasitraces}, which in turn is based on \cite{BlaHan82DimFct}.
We recall some details.

\begin{pgr}
\label{pgr:AWcompl}
Let $A$ be a unital \ca{}.
Recall that $\QT_{1\mapsto 1}(A)$ denotes the normalized $2$-quasitraces;
see \autoref{pgr:qt}.
Let $\tau\in\QT_{1\mapsto 1}(A)$.
Then $\tau$ extends canonically to a (possibly non-linear) functional $A\to\CC$, which we also simply denote by $\tau$.
Given $x\in A$, set
\[
\|x\|_\tau := \tau(x^*x)^{1/2}.
\]
By \cite[Lemma~3.5(2)]{Haa14Quasitraces}, we have
\begin{align}
\label{pgr:qt:eq2}
\|x+y\|^{2/3}_\tau \leq \|x\|^{2/3}_\tau + \|y\|^{2/3}_\tau,
\end{align}
for all $x,y\in A$.

If $x\in A$ is normal, then the restriction of $\tau$ to the sub-\ca{} generated by $x$ is linear, whence we can apply the Cauchy-Schwarz inequality to obtain
\begin{align}
\label{pgr:openProj:eq2CS}
|\tau(x)| \leq \|x\|_\tau.
\end{align}

For general $x$, let $a$ and $b$ be the real and imaginary parts of $x$.
Applying \eqref{pgr:qt:eq2} at the third step, we obtain
\[
\|a\|_\tau^2
= \big\| \frac{1}{2}(x+x^*) \big\|_\tau^2
= \frac{1}{4} \big( \|x+x^*\|_\tau^{2/3} \big)^3
\leq \frac{1}{4} \big( \|x\|_\tau^{2/3} + \|x^*\|_\tau^{2/3} \big)^3
= 2 \|x\|_\tau^2,
\]
and similarly $\|b\|_\tau^2\leq 2\|x\|_\tau^2$.
We deduce that
\begin{align*}
|\tau(x)|
= |\tau(a)+i\tau(b)|
= \big( |\tau(a)|^2 + |\tau(b)|^2 \big)^{\frac{1}{2}}
\leq \big( \|a\|_\tau^2 + \|b\|_\tau^2 \big)^{\frac{1}{2}}
\leq 2\|x\|_\tau.
\end{align*}

Using \eqref{pgr:qt:eq2}, one can show that $K_\tau := \big\{ x: \|x\|_\tau=0 \big\}$ is a closed, two-sided ideal.
If $K_\tau=\{0\}$, then $\tau$ is said to be \emph{faithful}.
If $A$ is simple, then $\tau$ is automatically faithful.
Define $\dist_\tau\colon A\times A\to[0,\infty)$ by setting
\begin{align*}
\dist_\tau(x,y) := \|x-y\|_\tau^{2/3} = \tau\big((x-y)^*(x-y)\big)^{1/3},
\end{align*}
for $x,y\in A$.
Then $\dist_\tau$ is a pseudometric on $A$.
If $\tau$ is faithful, then $\dist_\tau$ is a metric;
see \cite[Lemma~3.5, Definition~3.6]{Haa14Quasitraces}.
(In \cite{Haa14Quasitraces}, $\dist_\tau$ is denoted by $d_\tau$.
We introduce different notation since we reserve $d_\tau$ to denote the dimension function induced by $\tau$.)

Let $\ell^\infty(A)$ denote the \ca{} of bounded sequences in $A$.
Set
\begin{align}
\label{pgr:AWcompl:eq2tildeA}
\tilde{A}
&:= \big\{ (x_n)_n \in \ell^\infty(A) : (x_n)_n \text{ is a $\dist_\tau$-Cauchy sequence} \big\}, \andSep \\
\label{pgr:AWcompl:eq3tildeJ}
\tilde{J}
&:= \big\{ (x_n)_n \in \ell^\infty(A) : x_n\to 0 \text{ in the $\dist_\tau$-pseudometric} \big\},
\end{align}

Set $M:=\tilde{A}/\tilde{J}$, and let $\pi\colon \tilde{A}\to M$ denote the quotient \stHom.
It follows that $\tau$ induces a map $\tilde{A}\to\CC$ by $(x_n)_n\mapsto \lim_{n\to\infty}\tau(x_n)$.
One verifies that this is a normalized $2$-quasitrace on $\tilde{A}$ that vanishes on $\tilde{J}$.
We let $\bar{\tau}\colon M\to\CC$ be the induced map satisfying
\[
\bar{\tau}(\pi(x)) := \lim_{n\to\infty} \tau(x_n),
\]
for $x=(x_n)_n\in\tilde{A}$.
Given $x\in A$, the constant sequence $(x)_n$ belongs to $\tilde{A}$.
We obtain a \stHom{} $\iota\colon A\to M$ by $\iota(x):=\pi((x)_n)$.
We have $\bar{\tau}\circ\iota=\tau$.
If $\tau$ is faithful, then $\iota$ is injective and we can consider $A$ as a sub-\ca{} of $M$.

The algebra $M$ is an \AW-algebra and $\bar{\tau}$ is a faithful, normal, normalized $2$-quasitrace on $M$;
see \cite[Section~4]{Haa14Quasitraces}.
If $\tau$ is an extreme point in $\QT_{1\to 1}(A)$, then $M$ is a finite \AW-factor;
see \cite[Proposition~4.6]{Haa14Quasitraces}.
\end{pgr}

\begin{pgr}
\label{pgr:openProj}
Let $A$ be a unital \ca{}, and let $\tau\in\QT_{1\mapsto 1}(A)$.

Let $e\in A_+$.
Then $e^*e\leq\|e\|e$.
Using at the second step that $\tau$ is order-preserving, we obtain
\begin{align}
\label{pgr:openProj:eq1estNormTau}
\|e\|_\tau = \tau(e^*e)^{1/2} \leq \tau(\|e\|e)^{1/2} = \|e\|^{1/2} \tau(e)^{1/2}.
\end{align}

Let $a\in A_+$ be a contraction.
Then the sequence $\bar{a}:=(a^{1/n})_n$ is bounded.
Let us verify that $\bar{a}$ belongs to $\tilde{A}$, as defined in \eqref{pgr:AWcompl:eq2tildeA}.
Given $m\leq n$, we have $a^{1/m}\leq a^{1/n}$.
Using \eqref{pgr:openProj:eq1estNormTau} and $\|a^{1/n}- a^{1/m}\|\leq 1$ at the second step, and using at the last step that $a^{1/n}$ and $a^{1/m}$ commute, we deduce that
\[
\dist_\tau(a^{1/n},a^{1/m})^{1/3}
= \|a^{1/n}- a^{1/m}\|_\tau^2
\leq \tau(a^{1/n}- a^{1/m})
= \tau(a^{1/n})-\tau(a^{1/m}).
\]
We have $\lim_n\tau(a^{1/n})=d_\tau(a)$, which implies that $(a^{1/n})_n$ is a $\dist_\tau$-Cauchy sequence.
We have verified that $\bar{a}\in\tilde{A}$, which allows us to set
\begin{align*}
p_a:= \pi( \bar{a} ).
\end{align*}
Then
\begin{align}
\label{pgr:openProj:eq4tauPa}
\bar{\tau}(p_a) = \lim_n \tau(a^{1/n}) = d_\tau(a), \andSep
\bar{\tau}(p_a^2) = \lim_n \tau(a^{2/n}) = d_\tau(a).
\end{align}
For each $n$, we have $a^{2/n}\leq a^{1/n}$, and therefore $p_a^2\leq p_a$.
Since $p_a$ and $p_a^2$ commute, and since $\bar{\tau}$ is faithful on $M$, it follows that $p_a^2=p_a$.
Note that $p_a$ is the support projection of $\iota(a)$ in $M$, that is, the smallest projection acting as a unit on $\iota(a)$.
\end{pgr}

\begin{lma}
\label{prp:EdwardsCaPre}
Let $A$ be a unital \ca{}, let $\tau\in\partial_e \QT_{1\mapsto 1}(A)$, and let $a,b\in A_+$.
Then
\begin{align*}
\min\{d_\tau(a),d_\tau(b)\} = \sup \big\{ d_\tau(c) : c\in A_+, c\precsim a,b\big\}.
\end{align*}
\end{lma}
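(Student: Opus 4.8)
The plan is to pass to the finite $AW^*$-factor $M$ associated with $\tau$ and reduce the statement to a comparison of support projections there. Since $\tau\in\partial_e\QT_{1\mapsto 1}(A)$, the construction in \autoref{pgr:AWcompl} produces a finite $AW^*$-factor $M$ with a faithful, normal, normalized $2$-quasitrace $\bar\tau$, together with $\iota\colon A\to M$ satisfying $\bar\tau\circ\iota=\tau$. For a positive contraction $x\in A_+$, the element $p_x:=\pi((x^{1/n})_n)$ is the support projection of $\iota(x)$ in $M$, and by \eqref{pgr:openProj:eq4tauPa} we have $\bar\tau(p_x)=d_\tau(x)$. Since $d_\tau$ and the statement are unchanged if we replace $a,b$ by positive scalar multiples making them contractions, I may assume $a,b$ are contractions throughout.

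The inequality $\geq$ is immediate: any $c\in A_+$ with $c\precsim a,b$ satisfies $d_\tau(c)\leq\min\{d_\tau(a),d_\tau(b)\}$ because $d_\tau$ is a functional and hence order-preserving for Cuntz subequivalence. For the reverse inequality, the key step is that in a finite $AW^*$-factor the projections are totally ordered by Murray-von Neumann comparison and their $\bar\tau$-values fill an interval; in particular, the infimum $p_a\wedge p_b$ (meet in the projection lattice of $M$) exists and satisfies $\bar\tau(p_a\wedge p_b)=\min\{\bar\tau(p_a),\bar\tau(p_b)\}=\min\{d_\tau(a),d_\tau(b)\}$. This uses that $M$ is a factor, so comparison is total and the trace is multiplicative on the comparison order in the relevant sense. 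The remaining task is to produce, for each $t<\min\{d_\tau(a),d_\tau(b)\}$, an element $c\in A_+$ with $c\precsim a$, $c\precsim b$ and $d_\tau(c)>t$.

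The main obstacle, and the heart of the proof, is transporting the meet $p_a\wedge p_b$, which lives in the completion $M$, back to a genuine Cuntz-subequivalence statement inside $A$. The idea is to choose a projection $q\leq p_a\wedge p_b$ in $M$ with $\bar\tau(q)>t$, and then approximate: because $\iota(A)$ is $\dist_\tau$-dense in $M$ and $q$ is dominated by the support projections of both $\iota(a)$ and $\iota(b)$, one can find a positive contraction $c\in A_+$ whose image $\iota(c)$ is close to $q$ in the $\dist_\tau$-metric, arranged so that $c$ is spectrally supported inside hereditary subalgebras controlled by $a$ and by $b$. Concretely, I would use that $q\leq p_a$ means $q$ acts as a unit on a corner that $\iota(a)$ dominates, exploit R\o rdam's lemma (\autoref{pgr:CuA}) to convert the metric approximation into cut-down Cuntz-subequivalences $c_\varepsilon\precsim a$ and $c_\varepsilon\precsim b$, and estimate $d_\tau$ via $\bar\tau$ using inequalities \eqref{pgr:qt:eq2} and \eqref{pgr:openProj:eq2CS}. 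The delicate point is that $\precsim$ is a $C^*$-algebraic relation in $A$ while $\leq$ for projections is an $AW^*$-relation in $M$; controlling both subequivalences simultaneously for a single $c$, rather than separately, is where the argument requires care, and this is where I expect the real work to lie.
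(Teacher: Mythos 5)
Your reduction to contractions, the easy inequality, and the passage to the \AW-completion $M$ with support projections $p_a,p_b$ satisfying $\bar\tau(p_x)=d_\tau(x)$ all match the paper's setup. But the step you call the key one is false: in a finite \AW-factor (already in a $\mathrm{II}_1$ factor) the lattice meet does \emph{not} satisfy $\bar\tau(p_a\wedge p_b)=\min\{\bar\tau(p_a),\bar\tau(p_b)\}$. Total comparison in a factor controls Murray--von Neumann subequivalence classes, not the relative position of projections: two projections of trace $\tfrac12$ in general position have $p\wedge q=0$, consistent with the parallelogram law $\bar\tau(p\vee q)+\bar\tau(p\wedge q)=\bar\tau(p)+\bar\tau(q)$. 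Consequently a subprojection $q\leq p_a\wedge p_b$ with $\bar\tau(q)>t$ need not exist, and the plan collapses at this point. (Indeed, if the meet argument worked, Edwards' condition would be essentially trivial; the whole point is that the functional infimum is \emph{not} witnessed by a common lower bound in any lattice sense.)

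What is true, and what the paper uses instead (via Berberian: the order of projections in an \AW-factor is determined by the normalized quasitrace), is comparison: assuming $d_\tau(a)\leq d_\tau(b)$ one gets $v\in M$ with $vv^*=p_a$ and $v^*v\leq p_b$. The transport back to $A$ is then not a density argument --- and $\dist_\tau$-closeness of some $c\in A_+$ to a projection in $M$ gives no control on Cuntz subequivalence, so your sketched approximation step would also fail --- but an algebraic sandwich: lift $v$ to a contractive $(v_n)_n$ in $\tilde{A}$ and set $w_n:=a^{1/n}v_nb^{1/n}$. Then $w_nw_n^*\precsim a$ and $w_nw_n^*\sim w_n^*w_n\precsim b$ hold by construction for every $n$, and only the trace needs estimating: since $(w_n)_n$ is again a lift of $v$, one has $\pi(\bar{a}^2-\bar{w}\bar{w}^*)=p_a^2-vv^*=0$, whence $\tau(a^{2/n}-w_nw_n^*)\to 0$ and $\tau(w_nw_n^*)>t$ for some $n$; then $c:=w_nw_n^*$ works, as $c$ is a contraction and so $d_\tau(c)\geq\tau(c)>t$. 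The factor structure thus enters only through comparison of $p_a$ with $p_b$, never through the projection lattice.
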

\begin{proof}
The inequality `$\geq$' is clear.
Let us show the converse.
Without loss of generality, we may assume that $a$ and $b$ are contractions, and that $d_\tau(a)\leq d_\tau(b)$.
The statement is clear if $d_\tau(a)=0$.
Thus, we may assume that $0<d_\tau(a)$.
Let $t\in[0,\infty)$ satisfy $t<d_\tau(a)$.
We need to find $c\in A_+$ such that
\[
t<d_\tau(c), \andSep c\precsim a,b.
\]

Let $(M, \bar{\tau})$ be \AW-completion of $(A,\tau)$ as described in \autoref{pgr:AWcompl}.
Since $\tau$ is extremal, $M$ is a factor;
see \cite[Proposition~4.6]{Haa14Quasitraces}.
Consider $\tilde{A}$ as in \eqref{pgr:AWcompl:eq2tildeA}.
As explained in \autoref{pgr:openProj}, we have $\bar{a},\bar{b}\in\tilde{A}$.
Set
\[
p_a:= \pi( \bar{a} ), \andSep
p_b:= \pi( \bar{b} ).
\]
Then $p_a$ and $p_b$ are projections satisfying
\[
\bar{\tau}(p_a)
\stackrel{\eqref{pgr:openProj:eq4tauPa}}{=} d_\tau(a)
\leq d_\tau(b)
\stackrel{\eqref{pgr:openProj:eq4tauPa}}{=} \bar{\tau}(p_b).
\]
The order of projections in an \AW-factor is determined by its normalized quasitrace;
see \cite[Proposition~27.1, p.160]{Ber72BearRgs}.
We may therefore choose $v\in M$ such that $p_a=vv^*$ and $v^*v\leq p_b$.
Lift $v$ to a contractive element $\bar{v}=(v_n)_n$ in $\tilde{A}$.
Set
\[
w_n := a^{1/n}v_nb^{1/n},
\]
for $n\in\NN$, and $\bar{w}:=(w_n)_n$.
Then $\bar{w}=\bar{a}\bar{v}\bar{b}\in\tilde{A}$, and
\[
\pi(\bar{a}\bar{v}\bar{b})=p_avp_b=v.
\]
Thus, $\bar{w}$ is also a contractive lift of $v$.
For each $n$, we have
\[
w_nw_n^* = a^{1/n}v_nb^{2/n}v_n^*a^{1/n}\leq a^{2/n},
\]
and thus $\bar{a}^2-\bar{w}\bar{w}^*\geq 0$.
We deduce that
\[
\tau\big( a^{2/n}-w_nw_n^*\big)
= \big| \tau(a^{2/n}-w_nw_n^*) \big|
\stackrel{\eqref{pgr:openProj:eq2CS}}{\leq}
\big\| a^{2/n} - w_nw_n^* \big\|_\tau,
\]
for each $n$.
We have
\[
\pi(\bar{a}^2-\bar{w}\bar{w}^*) = p_a^2-vv^*=0,
\]
and thus $\bar{a}^2-\bar{w}\bar{w}^*\in\tilde{J}$.
It follows that
\begin{align}
\label{prp:EdwardsCaFactor:eqEstZero}
\lim_n \tau\big( a^{2/n}-w_nw_n^*\big) \leq \lim_n \| a^{2/n} - w_nw_n^* \big\|_\tau = 0.
\end{align}

Using \eqref{pgr:openProj:eq1estNormTau} at the second step, we deduce that
\[
\tau(a^{2/n})^{1/2}
= \tau\big( (a^{2/n}-w_nw_n^*) + w_nw_n^* \big)^{1/2}
\leq \tau\big( a^{2/n}-w_nw_n^* \big)^{1/2} + \tau\big( w_nw_n^* \big)^{1/2}.
\]
Using \eqref{prp:EdwardsCaFactor:eqEstZero} and using that $\lim_n\tau(a^{2/n})=d_\tau(a)>t$, we can choose $n\in\NN$ such that $\tau(w_nw_n^*)> t$.
Since $w_nw_n^*$ is contractive, we have $d_\tau(w_nw_n^*)\geq\tau(w_nw_n^*)>t$.
Moreover,
\[
w_nw_n^* = a^{1/n}v_nb^{2/n}v_n^*a^{1/n} \precsim a,b.
\]
Hence, $c:=w_nw_n^*$ has the desired properties.
\end{proof}

\begin{thm}
\label{prp:EdwardsCa}
Let $A$ be a unital \ca{}. 
Then $\Cu(A)$ satisfies Edwards' condition for $\partial_e F_{[1]\mapsto 1}(\Cu(A))$.
\end{thm}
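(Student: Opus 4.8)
The plan is to obtain the theorem from \autoref{prp:EdwardsCaPre} by reformulating it inside $\Cu(A)$ and transporting it from $A$ to its stabilization. Set $S:=\Cu(A)$ and $K:=F_{[1]\mapsto 1}(S)$. The assignment $\tau\mapsto d_\tau$ is an affine homeomorphism $\QT_{1\mapsto 1}(A)\cong K$ of Choquet simplices (see \autoref{pgr:qt}) and hence preserves extreme points, so every $\lambda\in\partial_e K$ equals $d_\tau$ for some $\tau\in\partial_e\QT_{1\mapsto 1}(A)$. Writing $a=[x]$ and $b=[y]$ with $x,y\in(A\otimes\KK)_+$, and recalling that $w\leq a$ in $S$ means $w=[z]$ with $z\precsim x$, Edwards' condition \eqref{dfn:Edwards:eq} for $\lambda$ reads precisely
\[
\min\{d_\tau(x),d_\tau(y)\}=\sup\big\{d_\tau(z):z\in(A\otimes\KK)_+,\ z\precsim x,y\big\}.
\]
The inequality `$\geq$' is clear. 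The obstacle for `$\leq$' is that \autoref{prp:EdwardsCaPre} is a statement about the \emph{unital} algebra $A$, whereas $x$ and $y$ live in the non-unital stabilization, so it cannot be invoked directly.

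To prove `$\leq$', fix $t<\min\{d_\tau(x),d_\tau(y)\}$. Since $[x]=\sup_n[x_{1/n}]$ in $S$ and $\lambda$ preserves suprema of increasing sequences, we have $d_\tau(x)=\sup_{\varepsilon>0}d_\tau(x_\varepsilon)$, and likewise for $y$; hence there is $\varepsilon>0$ with $t<\min\{d_\tau(x_\varepsilon),d_\tau(y_\varepsilon)\}$. Using that $A\otimes\KK$ is the inductive limit of the matrix algebras $M_n(A)$, together with the standard cut-down comparison estimates (R\o rdam's lemma), I would produce an $n\in\NN$ and positive $x',y'\in M_n(A)$ with $x_\varepsilon\precsim x'\precsim x$ and $y_\varepsilon\precsim y'\precsim y$; in particular $d_\tau(x'),d_\tau(y')>t$.

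The crucial point is that the normalized $2$-quasitrace $\sigma:=\tfrac1n\,\tau_\infty|_{M_n(A)}$ on $M_n(A)$ is again extreme: the restriction/amplification isomorphism $\QT_{1\mapsto1}(A)\cong\QT_{1\mapsto1}(M_n(A))$ is an affine homeomorphism and thus carries the extreme point $\tau$ to an extreme point. Applying \autoref{prp:EdwardsCaPre} to the unital algebra $M_n(A)$ and $\sigma$, and using that its defining identity is homogeneous in the quasitrace (so $\sigma$ may be rescaled to $\tau_\infty$), I obtain
\[
\min\{d_\tau(x'),d_\tau(y')\}=\sup\big\{d_\tau(c):c\in M_n(A)_+,\ c\precsim x',y'\text{ in }M_n(A)\big\}.
\]
Since $t$ lies strictly below the left-hand side, there is $c\in M_n(A)_+$ with $c\precsim x',y'$ in $M_n(A)$ and $d_\tau(c)>t$.

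Finally, Cuntz subequivalence is inherited by the larger algebra $A\otimes\KK$, so $c\precsim x'\precsim x$ and $c\precsim y'\precsim y$ there, whence $[c]\leq a,b$ in $S$ with $\lambda([c])=d_\tau(c)>t$. As $t<\min\{\lambda(a),\lambda(b)\}$ was arbitrary, this yields `$\leq$' and finishes the proof. The entire analytic content sits in \autoref{prp:EdwardsCaPre}; the only real work remaining, and the main thing to get right, is transferring that unital, element-level identity to the stabilization, namely the cut-down reduction to $M_n(A)$ and the preservation of extremality of the quasitrace under matrix amplification.
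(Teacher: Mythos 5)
Your proposal is correct and takes essentially the same route as the paper's own proof: reduce to the cut-downs $a_\varepsilon,b_\varepsilon$, use $[a_{\varepsilon/2}]\ll[a]$ and R{\o}rdam's lemma to move into the unital corner $A\otimes M_n$, observe that $\tfrac{1}{n}\tau_\infty|_{A\otimes M_n}$ is an extreme normalized $2$-quasitrace, apply \autoref{prp:EdwardsCaPre} there, and push the resulting $c$ back to $A\otimes\KK$. The only cosmetic differences are that the paper produces $a'\in(A\otimes M_n)_+$ with $a'\sim a_\varepsilon$ rather than your sandwich $x_\varepsilon\precsim x'\precsim x$, and that it leaves implicit the extremality-under-amplification and rescaling points you spell out.
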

\begin{proof}
We identify $\QT_{1\to 1}(A)$ with $F_{[1]\to 1}(\Cu(A))$, as in \autoref{pgr:qt}.
Let $\tau\in \partial_e \QT_{1\to 1}(A)$.
We let $\tau_\infty$ denote the (unique) extension of $\tau$ to a lower-semicontinuous $2$-quasitrace on $A\otimes\KK$.
Further, $d_\tau\colon(A\otimes\KK)_+\to[0,\infty]$ denotes the induced dimension function.
Let $a,b\in (A\otimes\KK)_+$, and let $t\in\RR$ satisfy $t<d_\tau(a),d_\tau(b)$.
To verify Edwards' condition, we need to find $c\in(A\otimes\KK)_+$ such that $t<d_\tau(c)$ and $c\precsim a,b$.

We have
\[
d_\tau(a) = \sup_{\varepsilon>0} d_\tau(a_\varepsilon),\andSep
d_\tau(b) = \sup_{\varepsilon>0} d_\tau(b_\varepsilon).
\]
Thus, we can choose $\varepsilon>0$ such that
\[
t< d_\tau(a_\varepsilon), d_\tau(b_\varepsilon).
\]
We have $[a_{\varepsilon/2}]\ll[a]$ and $[b_{\varepsilon/2}]\ll[b]$ in $\Cu(A)$, which allows us to choose $n$ such that $[a_{\varepsilon/2}],[b_{\varepsilon/2}]\leq n[1]$.
Note that $n[1]$ is the class of $1\otimes 1_n$, the unit of the hereditary subalgebra $A\otimes M_n$ in $A\otimes\KK$.
We have
\[
a_\varepsilon = (a_{\varepsilon/2}-\tfrac{\varepsilon}{2})_+,\andSep
a_{\varepsilon/2} \precsim 1\otimes 1_n.
\]
Applying R{\o}rdam's Lemma, we can choose a positive element $a'$ in the hereditary sub-\ca{} generated by $1\otimes 1_n$ (that is, in $(A\otimes M_n)_+$) such that $a_\varepsilon\sim a'$.
Similarly, we obtain $b'\in (A\otimes M_n)_+$ such that $b_\varepsilon\sim b'$.

The restriction of $\tfrac{1}{n}\tau_\infty$ to $A\otimes M_n$ is an extreme, normalized $2$-quasitrace of $A\otimes M_n$.
Applying \autoref{prp:EdwardsCaPre}, we obtain $c\in (A\otimes M_n)_+$ such that $t< d_\tau(c)$ and $c\precsim a',b'$.
Considering $c$ as an element in $A\otimes\KK$, we have $c\precsim a,b$.
This shows that $c$ has the desired properties.
\end{proof}

\section{Realizing chisels}
\label{sec:chisel}

Let $K$ be a compact, convex set.
In this section, we introduce \emph{chisels}, which are defined as functions in $\LAff(K)$ that take the value $\infty$ except at one extreme point of $K$;
see \autoref{dfn:chisel}.
We show that Edwards' condition implies that strictly positive chisels are realized;
see \autoref{prp:realizeChisel}.
We start with two preparatory lemmas.

\begin{lma}
\label{prp:fctlSimple}
Let $S$ be a simple, non-elementary \CuSgp{} satisfying \axiomO{5} and \axiomO{6}, and let $\lambda\in F(S)$.
Then $\lambda(S)=\{0,\infty\}$ or $\lambda(S)=[0,\infty]$.
\end{lma}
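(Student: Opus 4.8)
The plan is to separate the behaviour of $\lambda$ according to whether it attains a finite nonzero value, after first recording two structural facts. First I would note that $N:=\{a\in S:\lambda(a)=0\}$ is an ideal: it is a submonoid since $\lambda$ is additive, it is downward hereditary since $\lambda$ is order-preserving, and it is closed under suprema of increasing sequences since $\lambda$ preserves these. By simplicity $N=\{0\}$ or $N=S$; the latter gives $\lambda=0$ and $\lambda(S)=\{0\}$, the degenerate exception, so I assume $\lambda\neq0$ and hence that $\lambda$ is \emph{faithful}, meaning $\lambda(a)>0$ for all nonzero $a$. Secondly, for any nonzero $a$ simplicity makes $\infty a=\sup_n na$ the largest element of $S$, and preservation of suprema gives $\lambda(\infty a)=\sup_n n\lambda(a)$, which is $\infty$ because $\lambda(a)>0$. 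Thus $0,\infty\in\lambda(S)$ in all cases. Now if $\lambda$ attains no value in $(0,\infty)$ then faithfulness forces $\lambda(a)=\infty$ for every nonzero $a$, so $\lambda(S)=\{0,\infty\}$; this settles the first alternative.

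In the remaining case I fix $a_0$ with $\lambda(a_0)=c_0\in(0,\infty)$ and aim to show $\lambda(S)=[0,\infty]$, for which it suffices to realize each $t\in(0,\infty)$. The key is to produce elements of arbitrarily small positive $\lambda$-value, and the crucial preliminary observation is that $S$ is automatically stably finite here: if $a\ll\tilde a$ for some $\tilde a$, then $\tilde a\le\infty a_0=\sup_n na_0$, so the (sequential) way-below relation yields $m$ with $a\le ma_0$ and therefore $\lambda(a)\le mc_0<\infty$; an identity $a=a+e$ would then give $\lambda(e)=0$ and hence $e=0$ by faithfulness, so $a$ is finite. With stable finiteness established, the weak divisibility of \autoref{prp:structureSimpleCu}(4) applies: for each $n$ there is a nonzero $x_n$ with $nx_n\le a_0$, whence $0<\lambda(x_n)\le c_0/n$, the strict positivity again coming from faithfulness. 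Consequently $\inf\{\lambda(x):x\neq0\}=0$.

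Finally, to realize a prescribed $t\in(0,\infty)$ I would construct an increasing sequence $(y_n)_n$ with $\lambda(y_n)\le t$ and $\lambda(y_n)\to t$, and then put $y:=\sup_n y_n$, so that $\lambda(y)=\sup_n\lambda(y_n)=t$ by preservation of suprema. At stage $n$, writing $s:=\lambda(y_n)<t$, I would choose by the previous step a nonzero $x$ with $0<\lambda(x)=\eta<\min\{t-s,1/n\}$ and set $y_{n+1}:=y_n+kx$, where $k\ge1$ is the largest integer with $s+k\eta\le t$; then $y_{n+1}\ge y_n$ and $0\le t-\lambda(y_{n+1})<\eta<1/n$, which forces convergence to $t$. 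This yields $(0,\infty)\subseteq\lambda(S)$ and hence $\lambda(S)=[0,\infty]$. I expect the genuine obstacle to lie in the second paragraph: producing arbitrarily small \emph{positive} values is exactly where non-elementarity must be used (through weak divisibility) and where it must be married to faithfulness so that the small sub-elements are not annihilated by $\lambda$; once such values are available, the monotone filling-in of $t$ in the third paragraph is routine and uses only additivity and continuity of $\lambda$ along increasing sequences.
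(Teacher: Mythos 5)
Your proof is correct and follows essentially the same route as the paper: once $\lambda$ attains a value in $(0,\infty)$, faithfulness is automatic (the paper leaves this implicit, you make it explicit via the kernel ideal), Glimm halving produces nonzero elements of arbitrarily small positive $\lambda$-value, and the target $t$ is realized as the supremum of an increasing sequence of partial sums --- your greedy stage-by-stage choice of $k$ is just the paper's choice of $k_n\in\NN$ with $t=\sum_n k_n\lambda(y_n)$ unwound. The only divergence is that the paper invokes Robert's Glimm halving \cite[Proposition~5.2.1]{Rob13Cone} directly, which requires no stable finiteness, whereas you derive stable finiteness from the existence of the finite nonzero value $c_0=\lambda(a_0)$ in order to legitimately apply \autoref{prp:structureSimpleCu}(4) as stated in the paper --- a correct and self-contained patch (modulo the trivial edge case where some $\lambda(y_{n+1})=t$ exactly, after which the sequence can be continued constantly).
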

\begin{proof}
Let $a\in S$ such that $\lambda(a)\neq 0,\infty$.
We need to verify that $\lambda(S)=[0,\infty]$.
It follows from Robert's Glimm Halving for simple \CuSgp{s}, \cite[Proposition~5.2.1]{Rob13Cone}, that for every $n\in\NN$ there exists $x\in S$ with $x\neq 0$ and $nx\leq a$.
It follows that for every $n\geq 1$ there exists $y_n\in S$ with $0<\lambda(y_n)<\tfrac{1}{n}$.
Let $t\in(0,\infty)$.
We may choose $k_n\in\NN$ such that $t=\sum_n k_n \lambda(y_n)$.
The sequence $(\sum_{n=1}^l k_n y_n)_l$ is increasing.
Its supremum $y$ satisfies
\[
\lambda(y)
=\lambda\left(\sup_l \sum_{n=1}^l k_n y_n\right)
=\sup_l \lambda\left(\sum_{n=1}^l k_n y_n\right)
=\sup_l \sum_{n=1}^l k_n \lambda( y_n)
= t,
\]
which shows that $t\in\lambda(S)$.
\end{proof}

\begin{lma}
\label{prp:fctlIndep}
Let $S$ be a simple, non-elementary \CuSgp{} satisfying \axiomO{5} and \axiomO{6}, let $u\in S$ be a compact, full element, set $K:=F_{u\mapsto 1}(S)$, and let $\lambda\in\partial_e K$ and $\mu\in K$ with $\lambda\neq\mu$.
Assume that $S$ satisfies Edwards' condition for $\lambda$.
Then for every $n\geq 1$ there exists $a\in S$ with $\lambda(a)<\tfrac{1}{n}$ and $\mu(a)\geq n$.
\end{lma}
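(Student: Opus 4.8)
The plan is to separate $\lambda$ from $\mu$ in two stages: first produce a single element whose $\mu$-value vastly exceeds its $\lambda$-value (this is where extremality of $\lambda$ enters), and then shrink the $\lambda$-value below $1/n$ without spoiling this disparity (this is where Edwards' condition enters), closing with an integer-scaling argument.

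First I would record two preliminary facts. Since $S$ is simple and $\lambda(u)=\mu(u)=1$, both functionals are faithful: for nonzero $b$ we have $u\leq\infty b$, so $1=\lambda(u)\leq\lambda(\infty b)=\infty\cdot\lambda(b)$ forces $\lambda(b)>0$, and likewise for $\mu$. Moreover $\lambda(S)=[0,\infty]$ by \autoref{prp:fctlSimple}. I would then fix $M:=10n^2$ and reduce the statement to the following: there exists $b\in S$ with $\lambda(b)<\tfrac{1}{2n}$ and $\mu(b)>M\lambda(b)$. Indeed, given such a $b$, choosing the largest integer $k$ with $k\lambda(b)<\tfrac1n$ gives $k>\tfrac{1}{2n\lambda(b)}$ (because $\tfrac{1}{n\lambda(b)}>2$), whence $\lambda(kb)<\tfrac1n$ and $\mu(kb)=k\mu(b)>k\cdot M\lambda(b)>n$; the smallness of $\lambda(b)$ is exactly what prevents the integrality of $k$ from obstructing the scaling.

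Next I would produce an element of large disparity using extremality. By \autoref{prp:FSChoquet}, $K$ is a compact base of the algebraically ordered cone $C:=F(S)\setminus\{\lambda_\infty\}$, so the extreme point $\lambda\in\partial_e K$ spans an extreme ray of $C$. If we had $\mu\leq M\lambda$, then, since $M\lambda\neq\lambda_\infty$ and the order on $C$ is algebraic, there would be $\nu\in C$ with $\mu+\nu=M\lambda$; extremality of the ray through $\lambda$ then forces $\mu=s\lambda$ for some scalar $s$, and evaluating at $u$ gives $s=1$, i.e.\ $\mu=\lambda$, a contradiction. Hence $\mu\not\leq M\lambda$, so there is $a\in S$ with $\mu(a)>M\lambda(a)$; necessarily $\lambda(a)<\infty$. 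This step uses only that $\lambda$ is extreme, not Edwards' condition.

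The main step is then to halve the $\lambda$-value of $a$ while preserving the disparity, and here Edwards' condition is essential. Writing $s:=\lambda(a)$, I would pick $w$ with $\lambda(w)=s/2$ (possible since $\lambda(S)=[0,\infty]$), apply Edwards' condition for $\lambda$ to $a$ and $w$ to obtain $c\leq a,w$ with $\lambda(c)$ as close to $\min\{\lambda(a),\lambda(w)\}=s/2$ as desired, and then apply \axiomO{5} to $c\leq a$ to obtain an almost-complement $x$ with $c'+x\leq a\leq c+x$ for a chosen $c'\ll c$. Then $\lambda(c),\lambda(x)\approx s/2$, while from $a\leq c+x$ we get $\mu(c)+\mu(x)\geq\mu(a)>Ms$; hence whichever of $c,x$ carries the larger $\mu$-value has $\lambda$-value $\approx s/2$ and ratio $\mu/\lambda$ still essentially above $M$. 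Replacing $a$ by this heavier half halves the $\lambda$-value, and iterating finitely many times drives $\lambda$ below $\tfrac{1}{2n}$ while keeping the ratio above $M/5>2n^2$, yielding the element $b$ demanded in the reduction. The point of combining the two axioms is that \axiomO{5} conserves the total $\mu$-mass between the two halves, so it is merely redistributed and never lost, whereas Edwards' condition is what makes the split \emph{balanced} in $\lambda$.

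I expect the obstacle to lie precisely in this last step. A naive intersection of $a$ with a small-$\lambda$ element would control $\lambda$ but could collapse $\mu$, because Edwards' condition bounds the $\lambda$-value of an infimum while giving no lower bound on its $\mu$-value; the halving-with-complement scheme is exactly what circumvents this. The remaining technical care is the bookkeeping of the accumulated approximation errors over the finitely many halving steps: one should take the tolerance at each step proportional to the current $\lambda$-value, so that the per-step multiplicative loss in the ratio is $(1+O(\delta))$ and the product over all steps stays below the fixed budget $M/(2n^2)=5$.
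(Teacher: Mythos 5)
Your proof is correct, but it takes a genuinely different route from the paper's. The shared engine is the same: an infimum produced by Edwards' condition followed by an \axiomO{5}-complement, the point being that the complement inherits whatever $\mu$-mass the infimum fails to carry. The paper, however, runs this engine exactly once, with no iteration. It first shows only that $\lambda\nleq\mu$, by the same algebraic-order trick you use (if $\lambda+\varphi=\mu$ then $\varphi(u)=0$, hence $\varphi=0$ by fullness of $u$), so \emph{without} extremality; it picks $x'\ll x$ with $\lambda(x')>\mu(x')$, and then does all amplification by an upfront integer calibration: choosing $k,m\in\NN$ with $k\leq m\lambda(x')$ and $m\mu(x')\leq k-n$, it applies Edwards' condition to the single pair $(ku,mx')$ to get $y\leq ku,mx'$ with $\lambda(y)>k-\tfrac{1}{n}$, then \axiomO{5} to get $a$ with $y'+a\leq ku\leq y+a$; now $\lambda(a)<\tfrac{1}{n}$ is immediate, and $n\leq\mu(a)$ follows from $k=\mu(ku)\leq\mu(y)+\mu(a)\leq m\mu(x')+\mu(a)\leq k-n+\mu(a)$. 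Thus the multiple $ku$ of the unit plays the role of your ambient element, and the choice of $k,m$ does the work of your $M=10n^2$, the halving loop, and the final rescaling, so no error accumulation ever arises. Your version buys a cleaner conceptual separation (extremality yields an element of arbitrarily large disparity; Edwards merely redistributes mass in a balanced way), at the cost of the multi-step tolerance bookkeeping you rightly flag; the paper's calibration shows this bookkeeping is avoidable, and also that extremality of $\lambda$ is needed \emph{only} through Edwards' condition itself. One small repair to your first step: citing \autoref{prp:FSChoquet} imports a stable-finiteness hypothesis that the lemma does not assume. This is harmless, since your extreme-ray argument really only needs that $F(S)$ is algebraically ordered (\cite[Proposition~2.2.3]{Rob13Cone}) together with extremality of $\lambda$ in the convex set $K$: from $\mu+\nu=M\lambda$ one gets $\nu(u)=M-1<\infty$, so $\lambda=\tfrac{1}{M}\mu+\tfrac{M-1}{M}\tilde{\nu}$ with $\tilde{\nu}:=\tfrac{1}{M-1}\nu\in K$, and extremality forces $\mu=\lambda$; no base-of-cone or simplex structure is required.
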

\begin{proof}
We first show that $\lambda\nleq\mu$.
By \cite[Proposition~2.2.3]{Rob13Cone}, the cone $F(S)$ is algebraically ordered.
Thus, assuming $\lambda\leq\mu$, there exists $\varphi\in F(S)$ such that $\lambda+\varphi=\mu$.
Since $\lambda(u)=\mu(u)=1$, we obtain $\varphi(u)=0$ and consequently $\varphi=0$.
But then $\lambda=\nu$, a contradiction.

Thus, we have $\lambda\nleq\mu$, which allows us to choose $x\in S$ with $\lambda(x)>\mu(x)$.
Choose $x'\in S$ with $x'\ll x$ and $\lambda(x')>\mu(x')$.
Let $n\geq 1$.
Choose $k,m\in\NN$ such that
\[
k\leq m\lambda(x'), \andSep m\mu(x')\leq (k-n).
\]
Then
\[
k-\tfrac{1}{n} < \lambda(ku), \lambda(mx').
\]
Using that $S$ satisfies Edwards' condition for $\lambda$, we obtain $y\in S$ such that
\[
k-\tfrac{1}{n} < \lambda(y), \andSep y\leq ku, mx'.
\]
Choose $y'\in S$ with
\[
y'\ll y, \andSep k-\tfrac{1}{n} < \lambda(y').
\]
Applying \axiomO{5} for $y'\ll y\leq ku$, we obtain $a\in S$ such that
\[
y'+a\leq ku \leq y+a.
\]
Then
\[
k-\tfrac{1}{n}+\lambda(a)< \lambda(y')+\lambda(a) \leq \lambda(ku)=k,
\]
which implies $\lambda(a)<\tfrac{1}{n}$.
Using that $y\leq mx'$, we deduce
\[
k = \mu(ku) \leq \mu(y)+\mu(a) \leq \mu(mx')+\mu(a) \leq k-n+\mu(a),
\]
which implies that $n\leq \mu(a)$.
\end{proof}

Let $K$ be a compact, convex set, and let $\lambda\in\partial_e K$.
Set $U:=K\setminus\{\lambda\}$, which is an open, convex set.
Therefore the characteristic function of $U$ is lower semicontinuous and concave.
Multiplying by $\infty$, the function becomes affine, which justifies the following:

\begin{dfn}
\label{dfn:chisel}
Let $K$ be a compact, convex set, and let $\lambda\in\partial_e K$.
We let $\sigma_\lambda\in\LAff(K)$ be given by $\sigma_\lambda(\lambda)=0$ and $\sigma_\lambda(\mu)=\infty$ for $\mu\neq\lambda$.
Given $t\in\RR$, the \emph{chisel} at $\lambda$ with value $t$ is the function $t+\sigma_\lambda\in\LAff(K)$.
\end{dfn}

\begin{prp}
\label{prp:realizeChisel}
Let $S$ be a countably based, simple, non-elementary \CuSgp{} satisfying \axiomO{5} and \axiomO{6}, let $u\in S$ be a compact, full element, and set $K:=F_{u\mapsto 1}(S)$.
Assume that $S$ satisfies Edwards' condition for $\partial_e K$.

Then for every $\lambda\in\partial_e K$ and $t>0$ there exists $a\in S$ with $\widehat{a}_{|K}=t+\sigma_\lambda$.
\end{prp}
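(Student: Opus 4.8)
The plan is to realize $t+\sigma_\lambda$ as the rank of a countable sum of elements that are uniformly large off small metric balls around $\lambda$ while carrying very little mass at $\lambda$, and then to correct the value at $\lambda$ so that it equals exactly $t$. Since $S$ is countably based, $K$ is metrizable by \autoref{prp:FSChoquet}; I fix a compatible metric $d$. As $\lambda(u)=1$ is neither $0$ nor $\infty$, \autoref{prp:fctlSimple} gives $\lambda(S)=[0,\infty]$. Note that, because the sets $C_n:=\{\mu\in K:d(\mu,\lambda)\ge\tfrac{1}{n}\}$ exhaust $K\setminus\{\lambda\}$, it will suffice to force the value $\infty$ on each $C_n$, so the desired identity can be checked on all of $K$ directly, with no separate extreme-point reduction via \autoref{prp:LAffOrderExtr}.

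The crux is the following claim: for every compact $C\subseteq K$ with $\lambda\notin C$ and all $h,\delta>0$ there is $c\in S$ with $\widehat{c}>h$ on $C$ and $\lambda(c)<\delta$. To prove it, for each $\mu\in C$ I apply \autoref{prp:fctlIndep} (valid since $\mu\neq\lambda$ and $S$ satisfies Edwards' condition for $\lambda$) with parameter exceeding $\max\{h,2/\delta\}$ to obtain $x_\mu\in S$ with $\lambda(x_\mu)<\delta/2$ and $\mu(x_\mu)>h$. By lower semicontinuity of $\widehat{x_\mu}$, the sets $V_\mu:=\{\nu:\widehat{x_\mu}(\nu)>h\}$ are open and cover $C$; I extract a finite subcover indexed by $\mu_1,\dots,\mu_r$. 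Iterating \autoref{prp:EdwardsDual} $r-1$ times, with total slack less than $\delta/2$, yields $c\in S$ dominating every $x_{\mu_i}$ and satisfying $\lambda(c)<\max_i\lambda(x_{\mu_i})+\delta/2<\delta$; since $c\ge x_{\mu_i}$, monotonicity of ranks gives $\widehat{c}>h$ on $\bigcup_i V_{\mu_i}\supseteq C$. This claim is the main obstacle, and it is precisely where the dual Edwards condition is essential: a naive finite sum $\sum_i x_{\mu_i}$ would accumulate $\lambda$-mass of order $r\cdot(\delta/2)$, and $r$ is not known until the cover has been chosen, creating a circularity. Passing instead to an \emph{upper bound} whose $\lambda$-value is controlled by the maximum rather than the sum of the $\lambda(x_{\mu_i})$ is exactly what breaks this circularity.

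With the claim in hand, I choose $\delta_n>0$ with $\sum_n\delta_n=t$ and apply it with $C=C_n$, $h=n$, $\delta=\delta_n$ to obtain $c_n\in S$ with $\widehat{c_n}>n$ on $C_n$ and $\lambda(c_n)<\delta_n$. By \axiomO{1} the increasing sequence of partial sums has a supremum $a:=\sup_N\sum_{n\le N}c_n$. Since functionals preserve order, addition, and suprema of increasing sequences, $\lambda(a)=\sum_n\lambda(c_n)\le\lambda(c_1)+\sum_{n\ge2}\delta_n<\delta_1+\sum_{n\ge2}\delta_n=t$, so $t':=\lambda(a)<t$; and for each $\mu\neq\lambda$ we have $\mu\in C_n$ for all large $n$, whence $\mu(a)\ge\widehat{c_n}(\mu)>n$ for all such $n$, i.e.\ $\mu(a)=\infty$. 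Finally, using $\lambda(S)=[0,\infty]$ I pick $e\in S$ with $\lambda(e)=t-t'$ and set $b:=a+e$. Then $\lambda(b)=t$ while $\mu(b)=\infty$ for every $\mu\in K\setminus\{\lambda\}$, so $\widehat{b}_{|K}=t+\sigma_\lambda$, as required.
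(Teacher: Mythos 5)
Your proof is correct, but it takes a genuinely different route from the paper's. The paper's argument is a one-step order-theoretic construction: it considers $L:=\{x\in S:\lambda(x)<t\}$, notes that $L$ is upward directed by the dual Edwards condition (\autoref{prp:EdwardsDual}), and sets $a:=\sup L$, which exists because countably based \CuSgp{s} are directed complete (\autoref{prp:ctblBasedDCPO}); then $\lambda(a)=t$ follows from $\lambda(S)=[0,\infty]$ (\autoref{prp:fctlSimple}), and $\mu(a)=\infty$ for $\mu\neq\lambda$ because the elements produced by \autoref{prp:fctlIndep} eventually have $\lambda$-value below $t$ and hence lie in $L$. You instead exploit countable basedness only through metrizability of $K$ (\autoref{prp:FSChoquet}) and argue topologically: a compact exhaustion of $K\setminus\{\lambda\}$, lower semicontinuity of ranks plus finite subcovers, iterated applications of \autoref{prp:EdwardsDual} to dominate finitely many elements with $\lambda$-slack controlled by a maximum rather than a sum (your remark about the circularity that a naive sum $\sum_i x_{\mu_i}$ would create, since $r$ is unknown before the cover is chosen, is exactly the right point), an infinite sum via \axiomO{1}, and a final correction term $e$ with $\lambda(e)=t-t'$ --- a step the paper does not need, since its directed supremum hits the value $t$ on the nose. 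The same three lemmas (\autoref{prp:fctlSimple}, \autoref{prp:fctlIndep}, \autoref{prp:EdwardsDual}) carry both arguments, so neither is more general, but the trade-off is real: the paper's proof is shorter and avoids the topology of $K$ entirely, while yours is quantitative, makes visible exactly where the dual Edwards condition and countable basedness enter, and would adapt to settings where one has metrizability of $K$ but does not want to invoke suprema of uncountable directed subsets of $S$. Two trivialities deserve a line in a polished write-up: take $c:=0$ when some $C_n$ is empty (e.g.\ if $K=\{\lambda\}$), and note that the case $r=1$ of your iteration needs no application of \autoref{prp:EdwardsDual} at all.
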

\begin{proof}
Let $\lambda\in\partial_e K$ and $t>0$.
Consider the set
\[
L := \big\{ a : \lambda(a)<t \big\}.
\]
It follows from \autoref{prp:EdwardsDual} that $L$ is upward directed.
Since $S$ is countably based, every upward directed subset of $S$ has a supremum;
see \autoref{prp:ctblBasedDCPO}.
Thus, we may define
\[
a := \sup L.
\]
By \autoref{prp:fctlSimple}, we have $\lambda(S)=[0,\infty]$, which implies $\lambda(a)=t$.
Given $\mu\in K$ with $\mu\neq\lambda$, it follows from \autoref{prp:fctlIndep} that $\mu(a)=\infty$.
Hence $\widehat{a}_{|K}=t+\sigma_\lambda$.
\end{proof}

\begin{thm}
\label{prp:realizeChiselCa}
Let $A$ be a separable, unital, simple, non-elementary, stably finite \ca{}.
Set $K:=\QT_{1\mapsto 1}(A)$.
Then for every $\tau\in\partial_e K$ and $t>0$ there exists $a\in(A\otimes\KK)_+$ such that $d_\tau(a)=t$ and $d_\mu(a)=\infty$ for every $\mu\in K\setminus\{\tau\}$.
\end{thm}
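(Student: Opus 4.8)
The plan is to derive this as the $C^*$-algebraic instantiation of the abstract chisel-realization result \autoref{prp:realizeChisel}. I would set $S:=\Cu(A)$ and take as distinguished element $u:=[1]$, the class of the unit. Since $1$ is a projection we have $[1]\ll[1]$, so $u$ is compact, and since $A$ is unital and simple, $u$ is full. Under the identification of \autoref{pgr:qt}, the simplex $\QT_{1\mapsto 1}(A)$ corresponds to $K:=F_{[1]\mapsto 1}(\Cu(A))$, with $\tau$ corresponding to the functional $d_\tau$; in particular, for $a=[x]\in\Cu(A)$ with $x\in(A\otimes\KK)_+$ we have $\widehat{a}(\tau)=d_\tau(x)$ for every $\tau\in K$. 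The whole proof amounts to verifying the hypotheses of \autoref{prp:realizeChisel} and then translating its conclusion back through this dictionary.

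The first task is to check the standing hypotheses. By \autoref{prp:propertiesCuA}, $\Cu(A)$ is a \CuSgp{} satisfying \axiomO{5} and \axiomO{6}; it is countably based because $A$ is separable, and simple because $A$ is simple. The one translation requiring a small argument is non-elementarity: for a simple \ca{}, the semigroup $\Cu(A)$ contains a minimal nonzero element precisely when $A$ has a minimal projection, that is, when $A$ is elementary, so non-elementarity of $A$ yields non-elementarity of $\Cu(A)$ in the sense of \autoref{pgr:soft}. Finally, since $A$ is unital and stably finite, $K=\QT_{1\mapsto 1}(A)$ is a nonempty Choquet simplex (\autoref{pgr:qt}), so that $\partial_e K$ is nonempty and contains the given $\tau$.

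The crucial input, Edwards' condition for $\partial_e K$, is supplied verbatim by \autoref{prp:EdwardsCa}. With all hypotheses in place, \autoref{prp:realizeChisel} produces $a\in\Cu(A)$ with $\widehat{a}_{|K}=t+\sigma_\tau$. I would then pick a representative $x\in(A\otimes\KK)_+$ with $a=[x]$ and read off the conclusion through the dictionary above: evaluating at $\tau$ gives $d_\tau(x)=(t+\sigma_\tau)(\tau)=t$, while for every $\mu\in K\setminus\{\tau\}$ the chisel takes the value $\infty$, so $d_\mu(x)=\infty$. This is exactly the claim.

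I do not expect any genuine obstacle at the level of this statement, since essentially all of the analytic content has already been absorbed into the abstract \autoref{prp:realizeChisel} and, above all, into \autoref{prp:EdwardsCa}, whose proof via \AW{}-completions is where the real difficulty lies. The only points demanding care are bookkeeping: confirming that $\Cu(A)$ inherits non-elementarity from $A$, checking that stable finiteness guarantees $K$ is a nonempty simplex so that $\tau\in\partial_e K$ makes sense, and keeping the identification $\QT_{1\mapsto 1}(A)\cong F_{[1]\mapsto 1}(\Cu(A))$ straight so that ``value $t$ at $\tau$ and $\infty$ elsewhere'' translates correctly into the assertions about $d_\tau(x)$ and $d_\mu(x)$.
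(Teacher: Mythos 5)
Your proposal is correct and follows the same route as the paper's own proof: set $S:=\Cu(A)$ with $u:=[1_A]$, verify via \autoref{prp:propertiesCuA} and separability that $S$ is a countably based, simple, non-elementary \CuSgp{} satisfying \axiomO{5} and \axiomO{6}, invoke \autoref{prp:EdwardsCa} for Edwards' condition, apply \autoref{prp:realizeChisel} under the identification $\QT_{1\mapsto 1}(A)\cong F_{[1]\mapsto 1}(\Cu(A))$, and read off the conclusion from a representative of the resulting class. Your extra bookkeeping (non-elementarity of $\Cu(A)$, stable finiteness ensuring $K$ is a nonempty Choquet simplex) is exactly the content the paper compresses into ``it follows from the properties of $A$.''
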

\begin{proof}
Let $\tau\in\partial_e K$ and $t>0$.
Set $S:=\Cu(A)$.
It follows from the properties of $A$ that $S$ is a countably based, simple, non-elementary, stably finite \CuSgp{} satisfying \axiomO{5} and \axiomO{6}.
The class $u:=[1_A]$ is a compact, full element in $S$.
Under the identification of $\QT(A)$ with $F(S)$, see \autoref{pgr:qt}, the set $K$ corresponds to $F_{u\mapsto 1}(S)$, and $\tau$ corresponds to $d_\tau$.

It follows from \autoref{prp:EdwardsCa} that $S$ satisfies Edwards' condition for $\partial_e K$.
We may therefore apply \autoref{prp:realizeChisel} for $S$ to obtain $s\in S$ such that $\widehat{s}_{|K}=t+\sigma_{d_\tau}$.
Any $a\in(A\otimes\KK)_+$ with $s=[a]$ has the desired properties.
\end{proof}

\section{A new Riesz decomposition axiom for Cuntz semigroups}
\label{sec:O6+}

In this section, we introduce a new axiom \axiomO{6+} for \CuSgp{s}.
We show that this axiom is satisfied by Cuntz semigroups of \ca{s} with stable rank one (\autoref{prp:RieszSR1}), although it is not satisfied by the Cuntz semigroups of all \ca{s};
see \autoref{exa:notO6}.
We also show that \axiomO{6+} does not simply follow from weak cancellation and \axiomO{6} (which are both known to hold for Cuntz semigroups of \ca{s} with stable rank one);
see \autoref{exa:cancNotImplyO6}.

\begin{dfn}
\label{dfn:O6+}
Let $S$ be a \CuSgp.
We say that $S$ satisfies \emph{strengthened almost Riesz decomposition}, or that $S$ satisfies \axiomO{6+}, if for every $a,b,c,x',x,y',y\in S$ satisfying
\[
a \leq b+c, \quad x'\ll x \leq a,b, \andSep y'\ll y \leq a,c,
\]
there exist $e,f\in S$ such that
\[
a \leq e+f, \quad x'\ll e\leq a,b, \andSep y'\ll f\leq a,c.
\]
\end{dfn}

\begin{rmk}
Axiom \axiomO{6+} is a strengthening of the axiom of `almost Riesz decomposition', also called \axiomO{6}, as introduced by Robert in \cite{Rob13Cone}.
It is known that \axiomO{6} holds for the Cuntz semigroup of every \ca{};
see \cite[Proposition~5.1.1]{Rob13Cone}.
The strengthening of \axiomO{6} to obtain \axiomO{6+} is similar to the step from the original formulation of the axiom of `almost algebraic order' to its strengthened version as introduced in \cite[Definition~4.1]{AntPerThi18:TensorProdCu}.
All these axioms state that a certain problem (to find a complement, or to decompose an element below a sum) has an approximate solution, but the strengthened versions state that the element(s) can be found to improve a previously known (partial) solution of the problem.

Note that the strengthened version of \axiomO{5} holds for the Cuntz semigroup of every \ca{};
see \cite[Theorem~4.7]{AntPerThi18:TensorProdCu}.
However, in \autoref{exa:notO6} we show that \axiomO{6+} does not hold for Cuntz semigroups of all \ca{s}.
\end{rmk}

\begin{lma}
\label{prp:preO6+}
Let $S$ be a \CuSgp.
Assume that for every $a',a,b,c,x',x\in S$ satisfying
\[
a'\ll a \leq b+c, \andSep x'\ll x \leq a,b,
\]
there exists $e\in S$ such that
\[
a' \leq e+c, \andSep x'\ll e\leq a,b.
\]
Then $S$ satisfies \axiomO{6+}.
\end{lma}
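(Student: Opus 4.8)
The plan is to isolate a one-sided \emph{upgrade} of the hypothesis and then feed it to itself. Concretely, I would first prove the following statement $(\ast)$: under the hypothesis of the lemma, for every decomposition $a\le b+c$ and every $x'\ll x\le a,b$ there exists $e$ with
\[
a\le e+c, \andSep x'\ll e\le a,b.
\]
This is exactly the hypothesis but with $a$ in place of $a'$. Granting $(\ast)$, axiom \axiomO{6+} follows in two strokes. First apply $(\ast)$ to $a\le b+c$ and $x'\ll x\le a,b$ to obtain $e$ with $a\le e+c$ and $x'\ll e\le a,b$. Then apply $(\ast)$ to the decomposition $a\le c+e$ just produced, with the roles of $b$ and $c$ interchanged and with the witness $y'\ll y\le a,c$, to obtain $f$ with $a\le f+e$ and $y'\ll f\le a,c$. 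Then $a\le e+f$, $x'\ll e\le a,b$ and $y'\ll f\le a,c$, which is precisely \axiomO{6+}. The pivotal point is that the \emph{output} of the first application supplies exactly the decomposition needed to launch the second.

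To prove $(\ast)$ I would build an increasing sequence $(e_n)_n$ with $e_n\le a,b$, $x'\ll e_n$, and $a_n'\le e_n+c$ for a fixed $\ll$-increasing sequence $a_n'\uparrow a$ (available by \axiomO{2}), and then set $e:=\sup_n e_n$, which exists by \axiomO{1}. Then $e\le a,b$, while $x'\ll e_0\le e$ gives $x'\ll e$; and since the sequence increases, $a_n'\le e_n+c\le e+c$ for all $n$, so $a=\sup_n a_n'\le e+c$ by \axiomO{4}. The heart of the matter is the inductive step: given $e_n$, one wants to apply the one-sided hypothesis with $a_{n+1}'$ in the role of $a'$ and with $e_n$ fed in as the way-below witness, so that the resulting element dominates $e_n$ and the sequence genuinely increases.

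The main obstacle is that, to feed $e_n$ as a way-below witness, the hypothesis requires a \emph{handle}, i.e.\ an element $\hat e_n$ with $e_n\ll\hat e_n\le a,b$, and a generic element of a \CuSgp{} need not be way-below anything lying below $a,b$. I would manufacture such handles by a predecessor trick. The one-sided hypothesis first produces a \emph{raw} element $e'$ with $e_n\ll e'\le a,b$ and $a_{n+1}'\le e'+c$; writing $e'=\sup_k e'_k$ with $e'_k\ll e'$ by \axiomO{2} and using $e'+c=\sup_k(e'_k+c)$ by \axiomO{4}, for all large $k$ one has simultaneously $e_n\le e'_k$, $x'\ll e'_k$ (after interpolating $x'\ll x''\ll e'$), and $a_{n+1}''\le e'_k+c$ for a preassigned $a_{n+1}''\ll a_{n+1}'$. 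Setting $e_{n+1}:=e'_k$ and $\hat e_{n+1}:=e'$ restores the invariant $e_{n+1}\ll\hat e_{n+1}\le a,b$, keeps the sequence increasing, and still covers a cofinal part of $a$; choosing the coverage targets so that $\sup_n a_{n+1}''=a$ absorbs the slight loss incurred at each passage to a predecessor. The base step is identical, starting from the given pair $x'\ll x$ as initial witness and handle. Throughout, the only tools are \axiomO{1}, \axiomO{2}, \axiomO{4}, the standard implication that $p\ll q\le r$ forces $p\ll r$, and the one-sided hypothesis, so the argument remains entirely within the category $\CatCu$.
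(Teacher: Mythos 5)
Your proposal is correct and takes essentially the same route as the paper: the paper's proof likewise first establishes your statement $(\ast)$ (its Claim~1) by an inductive construction with $\ll$-bookkeeping using \axiomO{1}, \axiomO{2} and \axiomO{4}, and then applies it twice, the second time to the decomposition $a\leq e+c$ with the roles of the summands interchanged. Your predecessor trick for regenerating the way-below handle is only a cosmetic variant of the paper's device of maintaining the invariant $e_{n-1}'\ll e_{n-1}\leq a_{n+1},b$ along a fixed $\ll$-increasing sequence $(a_n)_n$ with supremum $a$ and applying the hypothesis to the pairs $a_n\ll a_{n+1}$ rather than to $a$ itself.
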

\begin{proof}
Claim~1:
Let $a,b,c,x',x\in S$ satisfy
\[
a \leq b+c, \andSep x'\ll x \leq a,b.
\]
Then there exists $e\in S$ such that
\[
a \leq e+c, \andSep x'\ll e\leq a,b.
\]

To verify the claim, set $e_0':=x'$, and choose $e_0$ such that $e_0'\ll e_0\ll x$.
Then choose a $\ll$-increasing sequence $(a_n)_{n\geq 0}$ with supremum $a$ and such that $a_0=e_0$.
For each $n\geq 1$, choose $a_n'$ satisfying $a_{n-1}\ll a_n'\ll a_n$.
We will inductively find elements $e_n',e_n$, for $n\geq 1$, such that
\[
a_n' \leq e_n'+c, \andSep e_{n-1}'\ll e_n' \ll e_n \leq a_{n+1},b,
\]
for $n\geq 1$. 
For the base case of the induction, we have $e_0'=x'$ and $e_0=a_0$.

For the induction step, let $n\geq 1$, and assume that we have chosen $e_k'$ and $e_k$ for $k\leq {n-1}$.
To obtain $e_n'$ and $e_n$, we use that
\[
a_n \ll a_{n+1} \leq b+c, \andSep e_{n-1}'\ll e_{n-1} \leq a_{n+1},b.
\]
By assumption, we obtain $e_n\in S$ such that
\[
a_n \leq e_n + c, \andSep e_{n-1}'\ll e_n\leq a_{n+1},b.
\]
Using that $a_n'\ll a_n$ and $e_{n-1}'\ll e_n$, we can choose $e_n'$ such that
\[
a_n' \leq e_n' + c, \andSep e_{n-1}'\ll e_n'\ll e_n.
\]
Then $e_n'$ and $e_n$ have the desired properties.

Using that $(e_n')_n$ is an increasing sequence, we may set $e:=\sup_n e_n'$.
For each $n$ we have $a_{n-1}\leq a_n' \leq e_n'+c$, and therefore $a\leq e+c$.
Further, we have $x'=e_0'\ll e_1' \leq e$.
Moreover, for each $n$ we have $e_n'\leq a_{n+2}\leq a$ and $e_n'\leq b$, and thus $e\leq a,b$.
This proves the claim.

To verify that $S$ satisfies \axiomO{6+}, let $a,b,c,x',x,y',y\in S$ satisfy
\[
a \leq b+c, \andSep x'\ll x \leq a,b, \andSep y'\ll y \leq a,c.
\]
Applying claim~1 for $a \leq b+c$ and $x'\ll x \leq a,b$, we obtain $e\in S$ such that
\[
a \leq e+c, \andSep x'\ll e\leq a,b.
\]
Applying claim~1 for $a \leq e+c$ and $y'\ll y \leq a,c$, we obtain $f\in S$ with
\[
a \leq e+f, \andSep y'\ll f\leq a,c.
\]
It follows that $e$ and $f$ have the desired properties.
\end{proof}

Recall that a unital \ca{} is said to have \emph{stable rank one} if its invertible elements are dense.
A nonunital \ca{} is defined to have stable rank one if its minimal unitization does.
Stable rank one passes to quotients, hereditary sub-\ca{s}, matrix algebras and stabilizations.

\begin{thm}
\label{prp:RieszSR1}
Let $A$ be a \ca{} that has stable rank one.
Then $\Cu(A)$ satisfies \axiomO{6+}.
\end{thm}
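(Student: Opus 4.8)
The plan is to reduce the verification of \axiomO{6+} to the hypothesis of \autoref{prp:preO6+}, and then to establish that hypothesis using the $C^*$-algebraic picture of the Cuntz semigroup together with the stable rank one condition. By \autoref{prp:preO6+}, it suffices to show: whenever $a',a,b,c,x',x\in S$ satisfy $a'\ll a\leq b+c$ and $x'\ll x\leq a,b$, there exists $e\in S$ with $a'\leq e+c$ and $x'\ll e\leq a,b$. Since $A$ has stable rank one, we may assume $A$ is stable (stable rank one passes to stabilizations) and represent all the relevant classes by positive elements in $A$ via the isomorphism $\Cu(A)=\Cu(A\otimes\KK)$. So I would fix positive elements realizing $a,b,c,x$ and the corresponding cut-downs for the $\ll$ relations, using R{\o}rdam's lemma and the characterization of $\ll$ for Cuntz classes (namely $[f]\ll[g]$ iff $[f]\leq[g_\delta]$ for some $\delta>0$).

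First I would exploit the key feature of stable rank one recorded in \autoref{pgr:CuA}: for $A$ with stable rank one, $f\precsim g$ holds if and only if for every $\varepsilon>0$ there is a unitary $w$ in $\widetilde{A}$ with $wf_\varepsilon w^*\subseteq g$, i.e.\ subequivalence is implemented by honest unitary conjugation of cut-downs into the hereditary subalgebra. This is the crucial upgrade over general $C^*$-algebras, where one only gets approximate conjugation by arbitrary elements $r_n$. The idea is that the relation $x\leq a,b$ together with $a\leq b+c$ should let me produce, after unitarily rotating representatives, an element $e$ that genuinely sits inside the hereditary subalgebra generated by both $a$ and $b$, captures $x'$ from below, and whose sum with $c$ dominates $a'$. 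Concretely, I would start from a decomposition of (a cut-down of) $a$ witnessing $a\leq b+c$, use the unitary to move the ``$b$-part'' so that it lies simultaneously below $a$ and $b$, and take $e$ to be the class of that part.

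The technical core is the order-theoretic/functional-calculus bookkeeping needed to simultaneously control three constraints on $e$: that it lies below both $a$ and $b$ (which unitary conjugation of a cut-down into $\overline{bAb}$, together with the ambient bound by $a$, should give), that $x'\ll e$ (so $e$ must still be large enough to swallow the already-chosen subelement $x'$ of $x\leq a,b$), and that $a'\leq e+c$ (so that $e$ together with $c$ recovers $a'$ from above). I expect the main obstacle to be arranging the last two simultaneously: pushing $e$ up far enough to dominate $x'$ and to complement $c$ above $a'$, while not letting it escape the constraint $e\leq a,b$. The natural device is to choose the cut-down parameters so that the ``missing'' piece between $a'$ and $e$ is absorbed by $c$, and to invoke weak cancellation (valid here by \autoref{prp:propertiesCuA}) and \axiomO{5} to convert the unitary-conjugation estimates into the required $\ll$ and $\leq$ relations in $\Cu(A)$. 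Throughout I would pass to slightly smaller cut-downs $a'\ll a''\ll a$, $x'\ll x''\ll x$ to create the room needed for the way-below conclusions, taking suprema at the end if an iterative construction is required.
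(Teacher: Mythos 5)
Your proposal is correct and follows essentially the same route as the paper: the paper likewise reduces \axiomO{6+} to the hypothesis of \autoref{prp:preO6+}, passes to the stabilization, represents $a,b,c$ by positive elements $r,s,t$ with $s$ and $t$ orthogonal, uses stable rank one (via \cite[Proposition~2.4]{Ror92StructureUHF2}, applied inside the hereditary subalgebra $\overline{(s+t)_\delta A(s+t)_\delta}$) to unitarily conjugate a cut-down of the representative of $x$ into $\overline{s_\delta A s_\delta}$ jointly with the representative $\tilde{r}$ of a cut-down of $a$, and then takes $e:=[\hat{r} f_\delta(s) \hat{r}]$, the ``$b$-part'' of $\hat{r}$, exactly as you sketch. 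The only cosmetic difference is that the bookkeeping you anticipate is handled directly by functional calculus (using $f_\delta(s+t)=f_\delta(s)+f_\delta(t)$ and $\hat{g}\leq\kappa\hat{r}$ to get $a'\leq e+c$ and $x'\ll e$), so weak cancellation and \axiomO{5} turn out not to be needed in this step.
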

\begin{proof}
We use the notation for Cuntz (sub)equivalence and $\varepsilon$-cut-downs as in \autoref{pgr:CuA}.
Given $\delta>0$, let $f_\delta\colon[0,\infty)\to[0,1]$ be the continuous function that takes value $0$ at $0$, that takes value $1$ on $[\delta,\infty)$, and that is linear on the interval $[0,\delta]$.
Given $z\in A_+$, the element $f_\delta(z)$ acts as a unit on $z_\delta$, and consequently $f_\delta(z)d=d=df_\delta(z)$ for every $d\in\overline{z_\delta A z_\delta}$.

Since stable rank one passes to stabilizations, we may assume that $A$ is stable.
Set $S:=\Cu(A)$.
To verify the assumption of \autoref{prp:preO6+} for $S$, let $a',a,b,c,x',x\in S$ satisfy
\[
a'\ll a \leq b+c, \andSep x'\ll x \leq a,b.
\]
Choose $\tilde{x}\in S$ such that $x'\ll\tilde{x}\ll x$.
Choose $r,s,t\in A_+$ representing $a$, $b$ and $c$, respectively.
We may assume that $s$ and $t$ are orthogonal.
Then $b+c=[s+t]$.
Choose $\varepsilon>0$ such that $\tilde{x},a'\leq[r_{\varepsilon}]$.
Since $\tilde{x}\ll x$, we can choose $\delta'>0$ such that $\tilde{x}\leq[s_{\delta'}]$.

Since $r\precsim s+t$, there exists $\delta>0$ and $\tilde{r}\in A_+$ such that $r_{\varepsilon}\sim\tilde{r}\subseteq(s+t)_\delta$.
We may assume that $\delta\leq\delta'$, so that also $\tilde{x}\leq[s_{\delta}]$.
Choose $g\in A_+$ representing $\tilde{x}$.
Take $\alpha>0$ such that $x'\ll[g_\alpha]$.
We have
\[
[g] = \tilde{x} \leq [r_\varepsilon] = [\tilde{r}].
\]
Thus, there exist $\beta>0$ and $\tilde{g}\in A_+$ such that $g_\alpha\sim\tilde{g}\subseteq\tilde{r}_\beta$.
Then $f_\beta(\tilde{r})$ acts as a unit on $\tilde{g}$ and therefore
\[
\tilde{g} = f_\beta(\tilde{r}) \tilde{g} f_\beta(\tilde{r}) \leq \|\tilde{g}\| f_\beta(\tilde{r})^2 \leq \|\tilde{g}\| \beta^{-1} \tilde{r}.
\]
We have $x'\ll[g_\alpha]=[\tilde{g}]$.
Choose $\eta>0$ such that $x'\ll[\tilde{g}_\eta]$.

Set $B:=\overline{(s+t)_\delta A(s+t)_\delta}$.
We have
\[
\tilde{g} \sim g_\alpha \precsim g \precsim s_\delta,
\]
in $A$.
Since both $\tilde{g}$ and $s_\delta$ belong to $B$, we deduce that $\tilde{g} \precsim s_\delta$ in $B$.
Since stable rank one passes to hereditary sub-\ca{s}, $B$ has stable rank one.
Thus, there exists a unitary $u$ in the minimal unitization of $B$ such that $u \tilde{g}_\eta u^*\subseteq s_\delta$;
see \cite[Proposition~2.4]{Ror92StructureUHF2}.
Set
\[
\hat{g} := u \tilde{g}_\eta u^*, \andSep \hat{r}:=u\tilde{r}u^*.
\]
Then $\tilde{r}\sim\hat{r}\subseteq(s+t)_\delta$.
Set
\[
e:=[\hat{r} f_\delta(s) \hat{r}], \andSep f:=[\hat{r} f_\delta(t) \hat{r}].
\]
We have
\[
\hat{r} f_\delta(s) \hat{r} \sim f_\delta(s)^{1/2} \hat{r}^2 f_\delta(s)^{1/2} \precsim \hat{r}^2 \sim \hat{r},\andSep
\hat{r} f_\delta(s) \hat{r} \precsim f_\delta(s) \precsim s,
\]
and therefore $e\leq a$ and $e\leq b$.
Similarly, we obtain $f\leq a,c$.

Since $s$ and $t$ are orthogonal, we have $f_\delta(s+t)=f_\delta(s)+f_\delta(t)$.
Using this at the fifth step, and using at the fourth step that $f_\delta(s+t)$ acts as a unit on $\hat{r}$, we deduce
\begin{align*}
a'
\leq[r_\varepsilon]
=[\hat{r}]
=[\hat{r}^2]
&=[\hat{r} f_\delta(s+t) \hat{r}] \\
&=[\hat{r} f_\delta(s) \hat{r} + \hat{r} f_\delta(t) \hat{r}] \\
&\leq [\hat{r} f_\delta(s) \hat{r}] + [\hat{r} f_\delta(t) \hat{r}] = e+f \leq e+c.
\end{align*}
For $\kappa:=\|\tilde{g}\| \beta^{-1}$ we have $\tilde{g} \leq \kappa \tilde{r}$ and consequently
\[
\hat{g} = u \tilde{g}_\eta u^*
\leq u \tilde{g} u^*
\leq \kappa u \tilde{r} u^*
= \kappa \hat{r}.
\]
Then
\[
\hat{g}
= f_\delta(s)^{1/2}\hat{g}f_\delta(s)^{1/2}
\leq \kappa f_\delta(s)^{1/2}\hat{r}f_\delta(s)^{1/2}
\sim \hat{r}f_\delta(s)\hat{r},
\]
which implies
\[
x' \ll [\tilde{g}_\eta] = [u \tilde{g}_\eta u^*] = [\hat{g}] \leq [\hat{r}f_\delta(s)\hat{r}] = e,
\]
as desired.
\end{proof}

\begin{rmk}
\label{rmk:RieszSR1}
\autoref{prp:RieszSR1} also holds if $A\otimes\KK$ is only assumed to `have almost stable rank one'.
Recall that a \ca{} $A$ is said to \emph{have almost stable rank one} if every hereditary sub-\ca{} of $A$ is contained in the closure of the invertible elements of its minimal unitization;
see \cite[Definition~3.1]{Rob16RmksZstblProjless}.
In the proof of \autoref{prp:RieszSR1}, the assumption of stable rank one was only used to obtain the unitary $u$ in the minimal unitization of $B$.
The existence of such a unitary follows also if $B$ is contained in the closure of the invertible elements of its minimal unitization.

If $A$ has stable rank one, then it has almost stable rank one.
The converse holds if $A$ is unital.
It was shown by Robert, \cite[Corollary~3.2]{Rob16RmksZstblProjless}, that every (even not necessarily simple) $\mathcal{Z}$-stable, projectionless \ca{} has almost stable rank one.
It is not known if such algebras actually have stable rank one.

While stable rank one passes from a \ca{} to its stabilization, the same does not hold for almost stable rank one;
see \autoref{exa:asr1NotStable}.
Thus, to obtain \axiomO{6+} for the Cuntz semigroup of a \ca{} $A$, we need to require almost stable rank for $A\otimes\KK$ and not just for $A$.
\end{rmk}

We thank Leonel Robert for providing the following example.

\begin{exa}[Robert]
\label{exa:asr1NotStable}
Almost stable rank one does not pass to matrix algebras.
The Cuntz semigroup of the Jiang-Su algebra $\mathcal{Z}$ is isomorphic to $\NN\sqcup(0,\infty]$, with the elements in $\NN$ being compact (given by classes of projections), and with elements in $(0,\infty]$ being soft.
Choose a positive element $h$ in $\mathcal{Z}$ representing the soft $1$, and set $A:=\overline{h\mathcal{Z}h}$, the hereditary sub-\ca{} of $\mathcal{Z}$ generated by $h$.
Then $A$ is $\mathcal{Z}$-stable, non-unital, and projectionless.
However, $A\otimes M_2$ contains the class of the unit of $\mathcal{Z}$.
Hence, $\mathcal{Z}$ is isomorphic to a hereditary sub-\ca{} of $A$.
In particular, $A$ is projectionless, but not stably projectionless.

Let $D:=\{z\in\CC : |z|\leq 1\}$ be the $2$-disc, and consider $B:=C(D,A)$.
This \ca{} is again $\mathcal{Z}$-stable and projectionless.
Therefore, it has almost stable rank one, by \cite[Corollary~3.2]{Rob16RmksZstblProjless}.
However, $B\otimes M_2$ contains $C(D,\mathcal{Z})$ as a hereditary sub-\ca.
Let $f\colon D\to\mathcal{Z}$ be given by $f(z)=z\cdot 1_{\mathcal{Z}}$. 
We claim that $f$ can not be approximated by invertible elements in $C(D,\mathcal{Z})$.
Identify the boundary of $D$ with the 1-sphere $S^1$.
Then $f_{|S^1}$ defines a nontrivial element in $K_1(C(S^1)\otimes\mathcal{Z}) \cong K_1(C(S^1)) \cong \ZZ$.
Let $g\in C(D,\mathcal{Z})$ be invertible.
Then $g_{|S^1}$ defines the trivial element in $K_1(C(S^1)\otimes\mathcal{Z})$.
However, if $\|f-g\|_\infty<1$, then $f_{|S^1}$ and $g_{|S^1}$ define the same element in $K_1(C(S^1)\otimes\mathcal{Z})$.
Hence, $\|f-g\|_\infty\geq 1$.


Therefore $C(D,\mathcal{Z})$ does not have stable rank one.
Since $C(D,\mathcal{Z})$ is a unital, hereditary sub-\ca{} of $B\otimes M_2$, it follows that $B\otimes M_2$ does not have almost stable rank one.
\end{exa}

In \autoref{prp:LAffCu}, we have observed that for certain \CuSgp{s}, \axiomO{6} is equivalent to \axiomO{6+}.
However, while the Cuntz semigroup of every \ca{} satisfies \axiomO{6}, the next example shows that the same does not hold for \axiomO{6+}.

\begin{exa}
\label{exa:notO6}
Axiom \axiomO{6+} does not hold for the Cuntz semigroups of all \ca{s}.
Consider $C(S^2)$, the \ca{} of continuous functions on the $2$-sphere.
Nonzero vector bundles over $S^2$ are classified by their ranks (taking values in $\NN_{>0}$) together with their first Chern classes (taking values in $\ZZ$).
It follows that the Murray-von Neumann semigroup of $C(S^2)$ is
\[
V(C(S^2)) \cong \big\{ (0,0) \big\} \cup \big\{ (n,m) : n>0 \big\} \subseteq \ZZ\times\ZZ,
\]
with the algebraic order.

Let $\Lsc(S^2,\NNbar)$ denote the set of lower semicontinuous functions $S^2\to\NNbar$ with pointwise order and addition.
Given an open subset $U\subseteq S^2$, we let $\charFct_U\in\Lsc(S^2,\NNbar)$ denote the characteristic function of $U$.
We set
\[
\Lsc(S^2,\NNbar)_{nc} := \Lsc(S^2,\NNbar)\setminus\{n\charFct_{S^2}:n\geq 1\},
\]
the set of nonconstant functions in $\Lsc(S^2,\NNbar)$ together with zero.
By \cite[Theorem~1.2]{Rob13CuSpDim2}, we have
\begin{align*}
\Cu(C(S^2))
&\cong V(C(S^2)) \sqcup \big(\Lsc(S^2,\NNbar)\setminus\{n\charFct_{S^2}:n\geq 1\} \big) \\
&\cong \left( \NN_{>0}\times\ZZ \right) \sqcup \Lsc(S^2,\NNbar)_{nc}.
\end{align*}
Let us describe the order and addition.
We retain the usual addition and order (algebraic and pointwise, respectively) in the two components $\NN_{>0}\times\ZZ$ and $\Lsc(S^2,\NNbar)_{nc}$.
Let $(n,m)\in\NN_{>0}\times\ZZ$ and let $f\in\Lsc(S^2,\NNbar)_{nc}$.
We set $(n,m)+f:=n\charFct_{S^2}+f$ in $\Lsc(S^2,\NNbar)_{nc}$.
We have $(n,m)\leq f$ if and only if $n\charFct_{S^2}\leq f$ in $\Lsc(S^2,\NNbar)$.
Similarly, $f\leq (n,m)$ if and only if $f\leq n\charFct_{S^2}$.

Let $U,V\subseteq S^2$ be proper, nonempty, open subsets such that the closure of $U$ is contained in $V$.
Then $\charFct_U\ll \charFct_V$ in $\Cu(C(S^2))$, and $\charFct_V,\charFct_U\in\Lsc(S^2,\NNbar)_{nc}$.
We have
\[
(1,0) \leq (1,1) + \charFct_U, \andSep \charFct_U\ll \charFct_V \leq (1,0),(1,1).
\]
If $\Cu(C(S^2))$ satisfied \axiomO{6+}, then there would be $e\in\Cu(C(S^2))$ such that
\[
(1,0) \leq e + \charFct_U, \andSep \charFct_U\leq e \leq (1,0),(1,1).
\]
The condition $e\leq(1,0),(1,1)$ implies that $e$ belongs to $\Lsc(S^2,\NNbar)_{nc}$.
Thus, there exists $t\in S^2$ with $e(t)=0$.
Since $\charFct_U\leq e$, we have $t\notin U$.
But then $(e + \charFct_U)(t)=0$, which implies that $(1,0)$ is not dominated by $e+\charFct_U$, a contradiction.
\end{exa}

\begin{exa}
\label{exa:cancNotImplyO6}
Axiom \axiomO{6+} does not follow from weak cancellation and \axiomO{6}.
For example, consider again $S:=\Cu(C(S^2))$.
Since $S$ is the Cuntz semigroup of a \ca{}, it satisfies \axiomO{6} (and also \axiomO{5}).
Using the description of $S$ from \autoref{exa:notO6}, it is straightforward to check that $S$ is weakly cancellative.
However, by \autoref{exa:notO6}, $S$ does not satisfy \axiomO{6+}.
\end{exa}

\begin{rmk}
\label{rmk:RieszDecomp}
The Cuntz semigroup of a \ca{} with stable rank one need not satisfy Riesz decomposition.
Consider for example the Jiang-Su algebra $\mathcal{Z}$.
We have $\Cu(\mathcal{Z})\cong\NN\sqcup(0,\infty]$.
The elements in $\NN$ are compact, while the elements in $(0,\infty]$ are soft.

Let $a=1$, the compact element of value one, and let $b=c=\tfrac{2}{3}$, the soft element of value $\tfrac{2}{3}$.
We have $a=1\leq\tfrac{4}{3}=b+c$.
The only compact elements below $1$ are $0$ and $1$.
Moreover, the sum of any element with a nonzero soft element is again soft.
Therefore, if $a=a_1+a_2$, then either $a_1=0$ and $a_2=1$, or reversed.
In either case, we do not have $a_1,a_2\leq \tfrac{2}{3}$.

On the positive side, Perera has shown that the Cuntz semigroup of a $\sigma$-unital \ca{} with stable rank one and real rank zero satisfies Riesz decomposition;
see \cite[Theorem~2.13]{Per97StructurePositive}.
\end{rmk}

\section{Realizing functional infima}
\label{sec:fctlInf}

Let $S$ be a simple, stably finite \CuSgp{} satisfying \axiomO{5} and \axiomO{6}, let $u\in S$ be a compact, full element, and set $K:=F_{u\mapsto 1}(S)$.
Then $\LAff(K)$ is an inf-semilattice by \autoref{prp:FSChoquet} and \autoref{prp:LAffRiesz}.
We will consider the property that the set $\{\widehat{a}_{|K}:a\in S\}$ is closed under infima:

\begin{dfn}
\label{dfn:fctlInf}
Let $S$ be a simple, stably finite \CuSgp{} satisfying \axiomO{5} and \axiomO{6}, let $u\in S$ be a compact, full element, and set $K:=F_{u\mapsto 1}(S)$.
We say that $S$ \emph{realizes functional infima over $K$} if for all $a,b\in S$ there exists $c\in S$ satisfying $\widehat{c}_{|K}=\widehat{a}_{|K}\wedge\widehat{b}_{|K}$.
\end{dfn}

In \autoref{prp:fctlInf}, we show that \axiomO{6+} together with Edwards' condition ensures that $S$ realizes functional infima.
We prepare the proof in a sequence of lemmas.
In the following result, note that the backward implications in~(1) and~(2) follow immediately from $c\leq a,b$.

\begin{lma}
\label{prp:prePseudoInf}
Let $S$ be a \CuSgp{} satisfying \axiomO{6+}, let $B\subseteq S$ be a countable set, and let $z',z,a,b\in S$ satisfy $z'\ll z\leq a,b$.
Then there exists $c\in S$ such that $z'\ll c\leq a,b$ and:
\begin{enumerate}
\item
For every $t\in B$, we have $a\leq b+t$ if and only if $a\leq c+t$.
\item
For every $t\in B$, we have $b\leq a+t$ if and only if $b\leq c+t$.
\end{enumerate}
\end{lma}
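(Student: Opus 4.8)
The plan is to reduce to the two nontrivial (forward) implications, to transfer a single relation by one application of \axiomO{6+}, and then to iterate over the countable set $B$ while passing to a supremum. First I note that the backward implications in~(1) and~(2) are automatic from $c\le a,b$: if $a\le c+t$ then $a\le c+t\le b+t$, and symmetrically for~(2). So it suffices to produce $c$ with $z'\ll c\le a,b$ such that, for every $t\in B$, the relation $a\le b+t$ forces $a\le c+t$ and the relation $b\le a+t$ forces $b\le c+t$.

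The key single-step observation is that \axiomO{6+}, fed a trivial second partial solution, transfers one such relation to a smaller element. Suppose $a\le b+t$ and we are given $p\ll q\le a,b$. Applying \axiomO{6+} to $a\le b+t$ with the partial solutions $p\ll q\le a,b$ and $0\ll 0\le a,t$, we obtain $e,f$ with $a\le e+f$, $p\ll e\le a,b$, and $f\le a,t$; since $f\le t$ this yields $p\ll e\le a,b$ together with $a\le e+t$. Thus $a\le b+t$ has been transferred to $e\le a,b$, which still dominates the prescribed $p$. Swapping the roles of $a$ and $b$ in \axiomO{6+} handles a relation $b\le a+t$ in exactly the same way, again producing an element $e\le a,b$ with $b\le e+t$. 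Since both applications output elements below $a$ and $b$, a single increasing sequence can absorb relations of both types.

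On this basis I would enumerate $B$ so that both families of relations appear, and build a $\ll$-increasing sequence $c_0\ll c_1\ll\cdots$ with $z'\ll c_0\le a,b$ (taking $c_0$ with $z'\ll c_0\ll z$, so that $z$ serves as an initial strict upper bound below $a,b$), each $c_n\le a,b$, processing the $n$-th listed relation at stage $n$ by the single-step observation. The decisive structural fact is that each target condition ``$a\le c+t$'' and ``$b\le c+t$'' is \emph{upward closed} in $c$; hence a relation realized at some $c_n$ persists at every later term, and therefore at $c:=\sup_n c_n$. Since each $c_n\le a,b$ we get $c\le a,b$, and from $z'\ll c_0\le c$ we get $z'\ll c$, so such a $c$ is the element sought.

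The main obstacle is the way-below bookkeeping required to launch \axiomO{6+} at each stage. To apply the single step with output dominating the current $c_n$, one must supply a strict upper bound $c_n\ll d_n\le a,b$ (a ``buffer'' to play the role of $q$), and the step as stated consumes this buffer while producing $c_{n+1}=e$ below $a,b$ without manifestly furnishing a successor buffer $c_{n+1}\ll d_{n+1}\le a,b$; indeed \axiomO{6+} only manufactures elements \emph{below} its inputs, so a buffer above $c_{n+1}$ cannot simply be read off. I would therefore carry the buffer as part of the induction hypothesis, maintaining at each stage a pair $c_n\ll d_n\le a,b$, and attempt to extract the successor buffer by combining the \axiomO{6+} output with the interpolation property of the way-below relation in a \CuSgp{} (every $p\ll q$ admits $r$ with $p\ll r\ll q$). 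Arranging this interleaving so that each relation is realized at a genuine term of the increasing sequence—and hence survives in the supremum—rather than only at an auxiliary buffer that need not be dominated by $c$, is the delicate technical point on which the whole construction turns, and is where I expect the real work to lie.
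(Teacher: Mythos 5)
Your reduction and your single-step observation are exactly right, and they coincide with the paper's opening move: feeding \axiomO{6+} the trivial second witness $0\ll 0\le a,t$ transfers $a\le b+t$ to $a\le e+t$ with $p\ll e\le a,b$, and symmetrically for $b\le a+t$. But the construction you outline on top of this has a genuine gap, and you have in fact put your finger on it yourself without resolving it: the relation $a\le e+t$ produced by \axiomO{6+} holds at the buffer $e$, and since \axiomO{6+} only forces its output way-above the previous \emph{primed} element, the chain $(c_n)$ must live strictly below the buffers ($c_{n+1}\ll e$). Passing from $a\le e+t$ down to the chain loses the relation: from $c_{n+1}\ll e$ one can only extract $a'\le c_{n+1}+t$ for elements $a'\ll a$, never $a\le c_{n+1}+t$ itself. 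So the invariant your supremum argument needs --- each relation ``realized at a genuine term'' of the chain, then preserved by upward closure --- cannot be established at any finite stage, no matter how the buffer bookkeeping is arranged, and your single-pass enumeration of $B$ therefore does not suffice.

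The paper's resolution is to abandon exact realization at finite stages in favor of two further devices. First, fix $\ll$-increasing sequences $(a_n)_n$ and $(b_n)_n$ with suprema $a$ and $b$; at stage $n$ one records only the approximate relation $a_n\le x_n'+t_n$, which does descend from the \axiomO{6+} output $a\le x_n+t_n$ to a chain element $x_n'\ll x_n$ precisely because $a$ has been truncated to $a_n\ll a$. Second --- and this is the step your plan is missing --- enumerate the relevant elements of $B$ so that each appears \emph{cofinally often}: for a fixed $t$ with $a\le b+t$ one then obtains $a_{n_k}\le x_{n_k}'+t\le c+t$ along an infinite subsequence, and since $a=\sup_k a_{n_k}$ this yields $a\le c+t$ only in the limit. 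With these two changes the buffer problem you identified dissolves of its own accord: the \axiomO{6+} output $x_n$ itself serves as the next buffer, and interleaving the $a$-steps and $b$-steps ($x_n'\ll y_n'\ll x_{n+1}'$, starting from $y_0'=z'$ and $y_0=z$) keeps everything in a single increasing chain whose supremum is the desired $c$.
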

\begin{proof}
Let $B_a$ be the set of elements $t\in B$ such that $a\leq b+t$, and similarly for $B_b$.
Choose a sequence $(t_n)_n$ of elements in $B_a$ such that every element of $B_a$ appears cofinally often.
Similarly, choose the sequence $(r_n)_n$ of elements in $B_b$.

Choose $\ll$-increasing sequences $(a_n)_n$ and $(b_n)_n$ with supremum $a$ and $b$, respectively.
We will inductively choose elements $x_n',x_n,y_n',y_n\in S$ satisfying
\[
a_n \leq x_n' + t_n, \andSep y_{n-1}'\ll x_n'\ll x_n \leq a,b,
\]
and
\[
b_n \leq y_n' + r_n, \andSep x_n'\ll y_n'\ll y_n \leq a,b.
\]

Set $x_0',x_0:=0$, $y_0':=z'$ and $y_0:=z$.
Assume that we have chose $x_k',x_k,y_k',y_k$ for $k\leq n-1$.
We have $a\leq b+t_n$ and $y_{n-1}'\ll y_{n-1}\leq a,b$.
Applying \axiomO{6+}, we obtain $x_n$ such that
\[
a \leq x_n + t_n, \andSep y_{n-1}'\ll x_n \leq  a,b.
\]
Using that $a_n\ll a$, we can choose $x_n'$ such that
\[
a_n \leq x_n' + t_n, \andSep y_{n-1}'\ll x_n' \ll x_n.
\]
We have $b\leq a+r_n$ and $x_n'\ll x_n\leq a,b$.
Applying \axiomO{6+}, we obtain $y_n$ such that
\[
b \leq y_n + r_n, \andSep x_n'\ll y_n \leq a,b.
\]
Using that $b_n\ll b$, we can choose $y_n'$ such that
\[
b_n \leq y_n' + r_n, \andSep x_n'\ll y_n' \ll y_n.
\]

Set $c:=\sup_n x_n'$.
Note that $c=\sup_n y_n'$.
For each $n$, we have $x_n'\leq a,b$, and therefore $c\leq a,b$.
Moreover, we have $z'=y_0'\ll c$.

Let $t\in B_a$. 
By the choice of the sequence $(t_n)_n$, there exists a strictly increasing sequence $(n_k)_k$ of natural numbers such that $t=t_{n_k}$ for each $k$.
We have
\[
a_{n_k} \leq x_{n_k}' + t_{n_k} = x_{n_k}' + t \leq c+t,
\]
for each $k\in\NN$, and therefore $a\leq c+t$.

Similarly, we obtain $b\leq c+t$ for every $t\in B_b$. 
\end{proof}

\begin{lma}
\label{prp:pseudoInf}
Let $S$ be countably based \CuSgp{} satisfying \axiomO{5} and \axiomO{6+}, and let $z',z,a,b\in S$ satisfy $z'\ll z\leq a,b$.
Then there exists $c\in S$ satisfying $z'\ll c\leq a,b$ and:
\begin{enumerate}
\item
Given $y'\ll y\leq a,b$, there exists $r\in S$ such that $y'+r \leq a \leq c+r$.
\item
Given $y'\ll y\leq a,b$, there exists $s\in S$ such that $y'+s \leq b \leq c+s$.
\end{enumerate}
\end{lma}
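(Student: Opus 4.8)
The plan is to deduce \autoref{prp:pseudoInf} from \autoref{prp:prePseudoInf} by feeding the latter a carefully chosen countable set $B$, while producing the complements $r$ and $s$ required in~(1) and~(2) from \axiomO{5}. The guiding observation is that conditions~(1) and~(2) say that $c$, despite lying below both $a$ and $b$, behaves like $b$ (respectively $a$) under \axiomO{5}-complementation inside $a$ (respectively $b$): for the genuine element $b$, given $y'\ll y\le a,b$, the \axiomO{5}-complement $r$ of $y$ in $a$ already satisfies $y'+r\le a\le y+r\le b+r$ because $y\le b$. Thus the content of~(1) is only to replace $b$ by $c$ inside such an inequality $a\le b+r$, which is precisely what \autoref{prp:prePseudoInf} provides.

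Concretely, I would first fix a countable basis of $S$ (available since $S$ is countably based) and enumerate all pairs $(\beta',\beta)$ of basis elements with $\beta'\ll\beta\le a,b$; there are countably many. For each such pair, applying \axiomO{5} to $\beta'\ll\beta\le a$ (with trivial second summand) yields $r_{(\beta',\beta)}$ with $\beta'+r_{(\beta',\beta)}\le a\le \beta+r_{(\beta',\beta)}$, and applying it to $\beta'\ll\beta\le b$ yields $s_{(\beta',\beta)}$ with $\beta'+s_{(\beta',\beta)}\le b\le \beta+s_{(\beta',\beta)}$. Let $B$ be the countable set of all these elements $r_{(\beta',\beta)}$ and $s_{(\beta',\beta)}$. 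I would then apply \autoref{prp:prePseudoInf} to $z'\ll z\le a,b$ and this $B$ to obtain $c\in S$ with $z'\ll c\le a,b$ such that, for every $t\in B$, one has $a\le b+t$ iff $a\le c+t$, and $b\le a+t$ iff $b\le c+t$. This $c$ is the desired element.

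To verify~(1), let $y'\ll y\le a,b$. The key step is to interpolate down to a pair in the enumeration: since $S$ is a domain (\autoref{prp:ctblBasedDCPO}) the way-below relation interpolates, so I choose $y'\ll p\ll q\ll y$, and the basis property gives basis elements $\beta',\beta$ with $y'\le\beta'\le p$ and $q\le\beta\le y$; then $\beta'\ll\beta\le a,b$ and $y'\le\beta'$, so $(\beta',\beta)$ is one of the enumerated pairs. Setting $r:=r_{(\beta',\beta)}$, I get $y'+r\le\beta'+r\le a$, and since $\beta\le b$ also $a\le\beta+r\le b+r$; as $r\in B$, the defining property of $c$ forces $a\le c+r$, giving $y'+r\le a\le c+r$ as required. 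Condition~(2) follows symmetrically with $s:=s_{(\beta',\beta)}$, using $\beta\le a$ to obtain $b\le\beta+s\le a+s$ and hence $b\le c+s$.

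The main obstacle is essentially conceptual rather than computational: recognizing that the complement conditions~(1) and~(2)—which at first glance entangle $c$, $a$, $b$, and the complement—decouple cleanly into an \axiomO{5}-step producing, for each basis pair, a complement that automatically witnesses $a\le b+r$ (respectively $b\le a+s$), followed by the substitution $b\rightsquigarrow c$ supplied by \autoref{prp:prePseudoInf}. The only genuinely technical care required is the interpolation argument reducing an arbitrary pair $(y',y)$ to a basis pair $(\beta',\beta)$ in the enumeration while simultaneously keeping $y'\le\beta'$ and $\beta\le a,b$; everything else is bookkeeping.
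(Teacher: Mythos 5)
Your proposal is correct and follows essentially the same route as the paper's proof: the paper likewise builds a countable family of pairs $x_n'\ll x_n$ that interpolate every $y'\ll y\leq a,b$, takes \axiomO{5}-complements $r_n,s_n$ of these pairs in $a$ and $b$ as the set $B$, and then invokes \autoref{prp:prePseudoInf} to replace $b$ by $c$ in $a\leq b+r_n$ (and symmetrically). Your explicit interpolation argument via a countable basis merely spells out what the paper asserts in one line, so the two proofs coincide in substance.
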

\begin{proof}
Using that $S$ is countably based we can choose $x_n',x_n\in S$, for $n\in\NN$, such that $x_n'\ll x_n$ for each $n$, and such that for any $y'\ll y\leq a,b$ there exists $n$ such that $y'\leq x_n'\ll x_n \leq y$.
Given $n$, apply \axiomO{5} for $x_n'\ll x_n\leq a$ to obtain $r_n$ such that
\[
x_n' + r_n \leq a \leq x_n + r_n.
\]
Similarly, we obtain $s_n$ such that
\[
x_n' + s_n \leq b \leq x_n + s_n.
\]
Let $B$ be the set consisting of the elements $r_n$ and $s_n$.

Apply \autoref{prp:prePseudoInf} for $z',z,a,b$ and $B$ to obtain $c\in S$ satisfying $z'\ll c\leq a,b$ and the statements of \autoref{prp:prePseudoInf}.
Let $y',y\in S$ satisfy $y'\ll y\leq a,b$.
By construction, we can choose $n$ such that $y'\leq x_n'\ll x_n \leq y$.
Then
\[
a \leq x_n + r_n \leq b + r_n.
\]
Using \autoref{prp:prePseudoInf}(1), we obtain $a\leq c+r_n$.
Then
\[
y'+r_n \leq x_n'+r_n \leq a \leq c+r_n.
\]
Similarly, one deduces the second statement from \autoref{prp:prePseudoInf}(2).
\end{proof}

\begin{lma}
\label{prp:fctlInfPre}
Let $S$ be countably based, simple, stably finite \CuSgp{} satisfying \axiomO{5} and \axiomO{6+}, let $u\in S$ be a compact, full element, let $z',z,a,b\in S$ satisfy $z'\ll z\leq a,b$, and set $K:=F_{u\mapsto 1}(S)$.
Then there exists $c\in S$ satisfying $z'\ll c\leq a,b$ and such that
\[
\sup\big\{ \lambda(x) : x\leq a,b \big\}
\leq \lambda(c),
\]
for every $\lambda\in K$ with $\lambda(a)<\infty$. 
\end{lma}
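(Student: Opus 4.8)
The plan is to deduce this lemma almost immediately from the pseudo-infimum \autoref{prp:pseudoInf}, whose conclusion~(1) already packages the hard part of the argument. The strategy is to apply that lemma to produce $c$, and then to convert its order-theoretic conclusion into the desired pointwise estimate by testing against a functional $\lambda$ and cancelling a finite term; finiteness is precisely what the hypothesis $\lambda(a)<\infty$ provides.

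First I would apply \autoref{prp:pseudoInf} to $z',z,a,b$, whose hypothesis $z'\ll z\leq a,b$ is exactly what we assume. This yields $c\in S$ with $z'\ll c\leq a,b$ satisfying conditions~(1) and~(2) of that lemma. In particular $c$ already has the required relation to $z'$, $a$ and $b$, so it remains only to verify the inequality of functionals.

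Next, fix $\lambda\in K$ with $\lambda(a)<\infty$ and let $x\in S$ with $x\leq a,b$. Using \axiomO{2}, write $x=\sup_n x_n$ for a $\ll$-increasing sequence $(x_n)_n$; then each $x_n$ satisfies $x_n\ll x\leq a,b$. Applying \autoref{prp:pseudoInf}(1) with $y':=x_n$ and $y:=x$, I obtain $r\in S$ with $x_n+r\leq a\leq c+r$. The crucial observation is that $r\leq x_n+r\leq a$ forces $\lambda(r)\leq\lambda(a)<\infty$, so $\lambda(r)$ may be cancelled. Applying the functional $\lambda$ and chaining the two inequalities gives $\lambda(x_n)+\lambda(r)\leq\lambda(a)\leq\lambda(c)+\lambda(r)$, whence $\lambda(x_n)\leq\lambda(c)$ after cancellation. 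Since $\lambda$ preserves suprema of increasing sequences, passing to the supremum over $n$ yields $\lambda(x)\leq\lambda(c)$, and finally taking the supremum over all $x\leq a,b$ gives the stated inequality.

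The only subtle point --- and the sole place where the hypothesis $\lambda(a)<\infty$ is used --- is the finiteness $\lambda(r)<\infty$ needed for the cancellation step; everything else is bookkeeping. Indeed, the genuine content, namely producing the almost-complement $r$ relating $x_n$, $a$ and $c$, has already been carried out in \autoref{prp:pseudoInf} (itself resting on \axiomO{5} and \axiomO{6+}). Note that conclusion~(2) of that lemma is not needed here; the symmetric half involving $b$ plays no role because the estimate is anchored at functionals where $a$ (rather than $b$) is finite.
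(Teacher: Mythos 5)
Your proposal is correct and takes essentially the same approach as the paper: apply \autoref{prp:pseudoInf}(1) to elements way-below $x$, cancel the term $\lambda(r)$ (finite because $r\leq x_n+r\leq a$ and $\lambda(a)<\infty$), and pass to the supremum using that $\lambda$ preserves suprema of increasing sequences. As a minor remark, the paper's own write-up contains the slip ``let $x'\ll c$'' where $x'\ll x$ is clearly intended, and your version gets this detail right.
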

\begin{proof}
Apply \autoref{prp:pseudoInf} for $z',z,a,b$ to obtain $c\in S$ satisfying $z'\ll c\leq a,b$ and the statements of \autoref{prp:pseudoInf}.
To show that $c$ has the desired property, let $\lambda\in K$ satisfy $\lambda(a)<\infty$.
Let $x\in S$ satisfy $x\leq a,b$.
Let $x'\ll c$.
Apply \autoref{prp:pseudoInf}(1) to obtain $r\in S$ such that
\[
x'+r \leq a \leq c+r.
\]
Then
\[
\lambda(x')+\lambda(r) \leq \lambda(c)+\lambda(r).
\]
Since $\lambda(a)<\infty$, we have $\lambda(r)<\infty$, which allows us to cancel it from the above inequalities.
We deduce that
\[
\lambda(x') \leq \lambda(c).
\]
Since $\lambda(x)=\sup\{\lambda(x') : x'\ll x\}$, we obtain $\lambda(x)\leq\lambda(c)$, as desired.
\end{proof}

\begin{thm}
\label{prp:fctlInf}
Let $S$ be countably based, simple, stably finite \CuSgp{} satisfying \axiomO{5} and \axiomO{6+}, let $u\in S$ be a compact, full element, and set $K:=F_{u\mapsto 1}(S)$.
Assume that $S$ satisfies Edwards' condition for $\partial_e K$.
Then for every $a,b\in S$ there exists $c\in S$ satisfying $c\leq a,b$ and $\widehat{c}_{|K}=\widehat{a}_{|K}\wedge\widehat{b}_{|K}$.
\end{thm}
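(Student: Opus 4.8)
The plan is to reduce the desired functional identity to a pointwise statement on the extreme boundary and then realize it by a supremum built from \autoref{prp:fctlInfPre}. Since $\widehat{c}_{|K}$ and $\widehat{a}_{|K}\wedge\widehat{b}_{|K}$ both lie in $\LAff(K)$, \autoref{prp:LAffOrderExtr} shows it suffices to prove $\widehat{c}(\lambda)=(\widehat{a}_{|K}\wedge\widehat{b}_{|K})(\lambda)$ for every $\lambda\in\partial_e K$, and by \eqref{prp:LAffRiesz:eqInfAtExtr} the right-hand side equals $\min\{\lambda(a),\lambda(b)\}$. Any $c\le a,b$ automatically satisfies $\lambda(c)\le\min\{\lambda(a),\lambda(b)\}$, so only the reverse inequality is at stake; and Edwards' condition rewrites the target as $\lambda(c)\ge\sup\{\lambda(x):x\le a,b\}$. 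Thus I must produce a single $c\le a,b$ whose value at each extreme functional dominates that of every common lower bound of $a$ and $b$.

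First I would dispose of the trivial cases ($a=0$ or $b=0$) and fix a nonzero $z'\ll z\le a,b$, which exists by simplicity together with Edwards' condition. Applying \autoref{prp:fctlInfPre} to the pair $(a,b)$ yields $c$ with $z'\ll c\le a,b$ and $\lambda(c)\ge\sup\{\lambda(x):x\le a,b\}$ for all $\lambda$ with $\lambda(a)<\infty$; running the same argument with $a$ and $b$ interchanged (equivalently, invoking both complement properties of the element produced by \autoref{prp:pseudoInf}) covers all $\lambda$ with $\lambda(b)<\infty$. Combined with Edwards' condition, this already gives $\lambda(c)=\min\{\lambda(a),\lambda(b)\}$ at every $\lambda\in\partial_e K$ for which $\min\{\lambda(a),\lambda(b)\}<\infty$. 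Since staying below $a$ and $b$ caps the value at such $\lambda$ by $\min\{\lambda(a),\lambda(b)\}$, any later enlargement of $c$ that remains $\le a,b$ preserves these values, so the finite part of the problem is settled by one application of the preceding lemma.

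The remaining, and genuinely harder, case is the set $E:=\{\lambda\in\partial_e K:\lambda(a)=\lambda(b)=\infty\}$, where I need $\lambda(c)=\infty$. Here \autoref{prp:fctlInfPre} gives no information, because the cancellation of the complement's value in its proof requires $\lambda(a)<\infty$. To force the value up, I would build a $\ll$-increasing sequence $(c_n)$ with $c_n\le a,b$, starting from the element above and anchoring each step at $c_n\ll c_{n+1}$ so that the sequence is genuinely increasing and each $c_n$ can serve as the anchor for the next application of \autoref{prp:fctlInfPre}; along the way I would use Edwards' condition, which guarantees for each $\lambda\in E$ common lower bounds of $a$ and $b$ of arbitrarily large value, to arrange that the values of $c_n$ at $E$ climb to $\infty$. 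Passing to $c:=\sup_n c_n$ keeps $c\le a,b$, retains the already-correct finite values (now capped at $\min\{\lambda(a),\lambda(b)\}$), and yields $\lambda(c)=\infty$ on $E$, as required.

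The main obstacle is exactly this last step: producing one increasing chain whose supremum simultaneously attains the (possibly infinite) value $\min\{\lambda(a),\lambda(b)\}$ at every extreme functional at once. The difficulty is that $S$ need not admit joins or Riesz interpolation (\autoref{rmk:RieszDecomp}), so the common lower bounds supplied pointwise by Edwards' condition cannot simply be amalgamated; it is precisely \axiomO{6+}, funneled through \autoref{prp:fctlInfPre} and the $\ll$-chaining, that lets the many approximate solutions be merged into a single monotone sequence. I expect the technical heart to be verifying that this construction can be carried out so that the supremum reaches $\infty$ on all of $E$ simultaneously, rather than merely on a dense or countable subset, since lower semicontinuity of $\widehat{c}$ does not by itself propagate infinite values to limits.
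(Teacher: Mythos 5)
Your reduction to the extreme boundary via \autoref{prp:LAffOrderExtr} and \eqref{prp:LAffRiesz:eqInfAtExtr}, and your use of \autoref{prp:fctlInfPre} for functionals with $\min\{\lambda(a),\lambda(b)\}<\infty$, match the paper's strategy; but the proposal stops short exactly where you flag it: on $E=\{\lambda\in\partial_e K:\lambda(a)=\lambda(b)=\infty\}$ you have a hope, not an argument. The plan of anchoring successive applications of \autoref{prp:fctlInfPre} at common lower bounds of large value supplied by Edwards' condition cannot be carried out as stated: Edwards' condition produces, for each individual $\lambda\in E$, an element of large value at that one functional, and $\partial_e K$ is in general uncountable, so no single $\ll$-increasing sequence can be anchored at every point of $E$ separately; moreover \autoref{prp:fctlInfPre} gives no lower bound at any $\lambda$ with $\lambda(a)=\infty$, so iterating it with larger anchors does not by itself force the values on $E$ to climb. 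This is a genuine gap, not a technicality.

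The paper's resolution sidesteps $E$ entirely by a cut-down trick you are missing: choose a $\ll$-increasing sequence $(a_n)_n$ with supremum $a$ and work with the pairs $(a_n,b)$ instead of $(a,b)$. Since $a_n\ll a\leq\infty u$, there is $N$ with $a_n\leq Nu$, hence $\lambda(a_n)\leq N<\infty$ for \emph{every} $\lambda\in K$ (as $\lambda(u)=1$); so the finiteness hypothesis of \autoref{prp:fctlInfPre} is vacuous, and combined with Edwards' condition and \autoref{prp:LAffOrderExtr} one gets $c_n\leq a_n,b$ with $\widehat{c_n}_{|K}$ \emph{exactly} equal to $f_n:=\widehat{a_n}_{|K}\wedge\widehat{b}_{|K}$ --- no infinite values ever arise at any finite stage. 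Since the $c_n$ need not increase, one interlaces: using that $\LAff(K)$ is a countably based domain (\autoref{prp:llInLAff}), choose an increasing sequence $f_n'\ll f_n$ with $\sup_n f_n'=f:=\widehat{a}_{|K}\wedge\widehat{b}_{|K}$, and inductively pick $c_{n-1}'\ll c_n'\ll c_n\leq a_n,b$ with $f_n'\leq\widehat{c_n'}_{|K}$, the anchor $c_{n-1}'\ll c_{n-1}\leq a_n,b$ feeding the next application of \autoref{prp:fctlInfPre}. Then $c:=\sup_n c_n'$ satisfies $c\leq a,b$ and $\widehat{c}_{|K}\geq\sup_n f_n'=f$, while $c\leq a,b$ gives the reverse inequality; the infinite values of $f$ on $E$ are attained automatically as suprema of the finite values $f_n'(\lambda)$, which is precisely the simultaneity your construction could not secure.
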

\begin{proof}
By \autoref{prp:FSChoquet}, $K$ is a metrizable Choquet simplex.
Let $a,b\in S$.
Choose a $\ll$-increasing sequence $(a_n)_n$ with supremum $a$.
Set
\[
f:=\widehat{a}_{|K}\wedge\widehat{b}_{|K}, \andSep
f_n:=\widehat{a_n}_{|K}\wedge\widehat{b}_{|K},
\]
for each $n$.
Then $(f_n)_n$ is an increasing sequence in $\LAff(K)$ with pointwise supremum $f$.

By \autoref{prp:llInLAff}, $\LAff(K)$ is a countably based domain.
Therefore, the way-below relation in $\LAff(K)$ agrees with its sequential version, and every element in $\LAff(K)$ is the supremum of a $\ll$-increasing sequence.
We can therefore choose an increasing sequence $(f_n')_n$ in $\LAff(K)$ such that $f_n'\ll f_n$ for each $n$, and such that $\sup_n f_n'=f$.
We may assume $f_0'\leq 0$.

We will inductively construct $c_n',c_n\in S$ for $n\geq 0$ such that
\[
c_{n-1}' \ll c_n'\ll c_n \leq a_n,b,\andSep f_n' \leq \widehat{c_n'}_{|K},
\]
for $n\geq 1$.
Set $c_0'=c_0=0$.
Assume that we have chosen $c_k',c_k$ for $k\leq n-1$.
Apply \autoref{prp:fctlInfPre} for $c_{n-1}'\ll c_{n-1}\leq a_n,b$ to obtain $c_n\in S$ satisfying
\[
c_{n-n}'\ll c_n \leq a_n,b,\andSep \sup\big\{ \lambda(x) : x\leq a_n,b \big\}
\leq \lambda(c_n),
\]
for every $\lambda\in K$ with $\lambda(a_n)<\infty$.
Since $a_n\ll a\leq\infty u$, there exists $N\in\NN$ such that $a_n\leq Nu$.
Every $\lambda\in K$ satisfies $\lambda(u)=1$ and hence $\lambda(a_n)<\infty$.
Given $\lambda\in\partial_e K$, we apply Edwards' condition at the first step to obtain
\[
\min\{\lambda(a_n),\lambda(b)\}
= \sup \big\{ \lambda(x) : x\leq a_n,b \big\}
\leq \lambda(c_n)
\leq \min\{\lambda(a_n),\lambda(b)\}.
\]
It follows that $\widehat{c_n}(\lambda)=\min\{\lambda(a_n),\lambda(b)\}=f_n(\lambda)$, for all $\lambda\in\partial_e K$.
Thus, $\widehat{c_n}_{|K}=f_n$, by \autoref{prp:LAffOrderExtr}.

Use that $f_n'\ll f_n$ and $c_{n-1}'\ll c_n$ to choose $c_n'\in S$ such that
\[
c_{n-n}'\ll c_n' \ll c_n, \andSep  f_n'\leq \widehat{c_n'}_{|K}.
\]

The sequence $(c_n')_n$ is increasing, which allows us to set $c:=\sup_n c_n'$.
We have $c_n'\leq a,b$ for each $n$, and therefore $c\leq a,b$.
Further, we have
\[
f = \sup_n f_n' \leq \sup_n\widehat{c_n'}_{|K}=\widehat{c}_{|K},
\]
which shows that $c$ has the desired properties.
\end{proof}

\begin{rmk}
In \cite{AntPerRobThi18arX:CuntzSR1} we show that \axiomO{6+} together with a new technique of adjoining compact elements can be used to show the existence of infima (not just functional infima).
In particular, it follows that Cuntz semigroups of separable \ca{s} with stable rank one are inf-semilattices.
\end{rmk}

\section{Realizing ranks}
\label{sec:rankCu}

Throughout this section, we fix a weakly cancellative, countably based, simple, stably finite, non-elementary \CuSgp{} $S$ satisfying \axiomO{5} and \axiomO{6+}, together with a compact, full element $u\in S$, and we set $K:=F_{u\mapsto 1}(S)$.
We also assume that $S$ satisfies Edwards' condition for $\partial_e K$.
Note that $K$ is a Choquet simplex.

\begin{lma}
\label{prp:realizeRankApprox}
For every $g\in\Aff(K)_{++}$ and $\varepsilon>0$ there exists $a\in S$ with $g\leq\widehat{a}_{|K}\leq g+\varepsilon$.
\end{lma}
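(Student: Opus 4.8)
The plan is to reduce everything to the extreme boundary and then to squeeze $g$ from above by an infimum of chisels placed just above it. By \autoref{prp:LAffOrderExtr} it suffices to find $a\in S$ with $g(\lambda)\le\widehat{a}(\lambda)\le g(\lambda)+\varepsilon$ for every $\lambda\in\partial_eK$. Since $S$ is countably based, $K$ is a metrizable Choquet simplex (\autoref{prp:FSChoquet}), so $\partial_eK$ is separable; I would fix a countable dense subset $\{\lambda_i:i\in\NN\}\subseteq\partial_eK$ and, using \autoref{prp:realizeChisel}, realize for each $i$ a chisel $d_i\in S$ with $\widehat{d_i}_{|K}=\bigl(g(\lambda_i)+\tfrac{\varepsilon}{2}\bigr)+\sigma_{\lambda_i}$. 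Each such chisel dominates $g$: its value at $\lambda_i$ is $g(\lambda_i)+\tfrac{\varepsilon}{2}\ge g(\lambda_i)$, and it is $\infty$ elsewhere. The conceptual point to keep in mind is that a chisel carries no information away from its apex, so by \eqref{prp:LAffRiesz:eqInfAtExtr} a finite infimum $\bigwedge_{i\le n}\widehat{d_i}_{|K}$ still equals $\infty$ at every extreme point outside $\{\lambda_1,\dots,\lambda_n\}$ and hence can never be bounded by $g+\varepsilon$. It is only when one passes to the infimum $f:=\bigwedge_{i}\widehat{d_i}_{|K}$ over the \emph{whole} dense sequence that lower semicontinuity of $f$ forces its value at every extreme point to become finite, and this is what makes the approximation possible.

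Granting an element $a\in S$ with $\widehat{a}_{|K}=f$, both inequalities are immediate. For the lower bound, $g$ lies below each $\widehat{d_i}_{|K}$, hence below their infimum, so $g\le f=\widehat{a}_{|K}$. For the upper bound, fix $\mu\in\partial_eK$ and, using density, choose a subsequence $\lambda_{i_k}\to\mu$. Since $f\le\widehat{d_{i_k}}_{|K}$ we have $f(\lambda_{i_k})\le g(\lambda_{i_k})+\tfrac{\varepsilon}{2}$, and lower semicontinuity of $f$ together with continuity of $g$ gives $f(\mu)\le\liminf_k f(\lambda_{i_k})\le\liminf_k\bigl(g(\lambda_{i_k})+\tfrac{\varepsilon}{2}\bigr)=g(\mu)+\tfrac{\varepsilon}{2}$. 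Thus $g\le\widehat{a}_{|K}\le g+\varepsilon$ holds on $\partial_eK$, and \autoref{prp:LAffOrderExtr} upgrades this to all of $K$.

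The real work, and the step I expect to be the main obstacle, is producing an $a\in S$ whose rank is this infimum of infinitely many chisels, since the present setting only supplies \emph{binary} functional infima (\autoref{prp:fctlInf}) and $S$ need not admit infima of decreasing sequences. The finite infima $\bigwedge_{i\le n}d_i$ obtained by iterating \autoref{prp:fctlInf} \emph{decrease} in $n$, so one cannot simply pass to a supremum. Instead I would build the target element directly as a supremum $a:=\sup_n c_n$ of an \emph{increasing} sequence, mimicking the inductive construction in the proof of \autoref{prp:fctlInf}: at stage $n$ one uses \axiomO{6+}, \axiomO{5}, weak cancellation, and Edwards' condition to produce $c_n\le d_1,\dots,d_n$ realizing $\min\{\widehat{d_1},\dots,\widehat{d_n}\}$ at the extreme points $\lambda_1,\dots,\lambda_n$, arranged so that $(c_n)_n$ is increasing; the slack $\tfrac{\varepsilon}{2}$ is precisely what keeps the successive elements compatible rather than forcing a decrease. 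One then checks, via Edwards' condition and \autoref{prp:LAffOrderExtr} as in \autoref{prp:fctlInf}, that $\widehat{a}(\lambda_i)=g(\lambda_i)+\tfrac{\varepsilon}{2}$ along the dense sequence and that $\widehat{a}_{|K}$ agrees with $f$. The delicate points are the simultaneous control of infinitely many apexes through a single diagonal argument and the preservation of the lower bound $\widehat{a}_{|K}\ge g$ in the limit.
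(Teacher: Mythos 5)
Your reduction to the extreme boundary and your verification of the two inequalities (granting an element realizing the infimum) are fine, but the core of your proposal contains a genuine gap, and the ``conceptual point'' that motivated it is a red herring. First, observe that your infinite infimum is not a mysterious limit object: a function $h\in\LAff(K)$ satisfies $h\leq\widehat{d_i}_{|K}$ precisely when $h(\lambda_i)\leq g(\lambda_i)+\tfrac{\varepsilon}{2}$, so density of $(\lambda_i)_i$ in $\partial_e K$, lower semicontinuity, and \autoref{prp:LAffOrderExtr} show that $f:=\bigwedge_i\widehat{d_i}_{|K}$ is exactly the function $g+\tfrac{\varepsilon}{2}$. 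Hence realizing $f$ means realizing the rank $g+\tfrac{\varepsilon}{2}$ \emph{exactly}, which is a special case of \autoref{prp:realizeRank} --- the very theorem this lemma is a stepping stone toward. All of the difficulty is thus deferred to your third paragraph, and the induction sketched there cannot be run with the available tools: \autoref{prp:fctlInfPre} requires a seed $z'\ll z\leq a,b$ sitting below \emph{both} entries, and in the proof of \autoref{prp:fctlInf} such a seed propagates only because one entry is fixed while the other increases along a $\ll$-increasing sequence, so the old seed automatically satisfies the new constraints. In your scheme each stage introduces a fresh constraint $c_n\leq d_n$ under which the previous element need not sit. You do know $\widehat{c_{n-1}}_{|K}\leq g+\tfrac{\varepsilon}{2}\leq\widehat{d_n}_{|K}$, but upgrading rank domination to order domination for soft elements is precisely almost unperforation (\autoref{prp:charAlmUnp}), which is not assumed --- avoiding strict comparison is the whole point of the paper. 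The appeal to ``the slack $\varepsilon/2$'' is not an argument, and you yourself flag the preservation of $g\leq\widehat{c_n}_{|K}$ as unresolved.

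The missing idea is that one should shrink the chisels \emph{before} taking infima, which makes a finite infimum suffice. Your observation that a finite infimum of honest chisels is $\infty$ at the remaining extreme points is correct, but the paper's proof circumvents it: for each $\lambda\in\partial_e K$ one picks $a_\lambda'\ll a_\lambda$ with $g\leq\widehat{a_\lambda'}_{|K}$, and by \autoref{prp:wayBelowSoft} and \autoref{prp:llInLAff} interpolates $\widehat{a_\lambda'}_{|K}\leq g_\lambda\leq\widehat{a_\lambda}_{|K}$ with $g_\lambda\in\Aff(K)$ continuous and finite-valued. The pointwise infimum $h$ of the downward directed net $h_M=\bigwedge_{\lambda\in M}g_\lambda$, over finite subsets $M\subseteq\partial_e K$, satisfies $h\leq g+\tfrac{\varepsilon}{2}$ (checked at extreme points and upgraded via \autoref{prp:LAffOrderExtr}), and a Dini-type compactness argument --- $g+\varepsilon$ is way above $g+\tfrac{\varepsilon}{2}$ --- produces a single \emph{finite} set $M$ with $h_M\leq g+\varepsilon$. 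Finitely many applications of the binary \autoref{prp:fctlInf} then yield $a\in S$ with $\widehat{a}_{|K}=\bigwedge_{\lambda\in M}\widehat{a_\lambda'}_{|K}$, whence $g\leq\widehat{a}_{|K}\leq h_M\leq g+\varepsilon$. Without this finite reduction (or a genuinely new construction replacing your step three), your argument does not close.
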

\begin{proof}
Let $g\in\Aff(K)_{++}$ and $\varepsilon>0$.
For each $\lambda\in \partial_e K$, apply \autoref{prp:realizeChisel} to obtain $a_\lambda\in S$ with
\[
\widehat{a_\lambda}_{|K} = g(\lambda)+\tfrac{\varepsilon}{2}+\sigma_{\lambda}.
\]

We may assume that $a_\lambda$ is soft;
see \autoref{rmk:predecessor}.
By \autoref{prp:llInLAff}, we have $g \ll g+\tfrac{\varepsilon}{2} \leq \widehat{a_\lambda}_{|K}$ in $\LAff(K)$, which allows us to choose $a_\lambda'\in S$ such that
\[
a_\lambda'\ll a_\lambda, \andSep g\leq \widehat{a_\lambda'}_{|K}.
\]
By \autoref{prp:wayBelowSoft}, we have $\widehat{a_\lambda'}_{|K}\ll\widehat{a_\lambda}_{|K}$ in $\LAff(K)$.
Use \autoref{prp:llInLAff} to choose $g_\lambda\in\Aff(K)$ satisfying
\[
\widehat{a_\lambda'}_{|K}\leq g_\lambda \leq \widehat{a_\lambda}_{|K}.
\]

Consider the family $\mathcal{F}$ of finite subsets of $\partial_e K$, which is an upward directed set ordered by inclusion.
For each $M\in \mathcal{F}$ set
\[
h_M:=\bigwedge_{\lambda\in M} g_\lambda.
\]

Then $h_M\leq h_N$ if $M,N\in \mathcal{F}$ satisfy $M\supseteq N$.
Thus, $(h_M)_{M\in \mathcal{F}}$ is a downward directed net in $\Aff(K)$.
Set
\[
h:= \lim_{M\in \mathcal{F}} h_M = \inf_{M\in \mathcal{F}} h_M,
\]
the pointwise infimum of the family $(h_M)_{M\in \mathcal{F}}$.
Then $h$ is an upper semicontinuous, affine function on $K$.
For each $\lambda\in\partial_e K$, we have
\[
h(\lambda)
\leq h_{\{\lambda\}}(\lambda)
= g_\lambda(\lambda)
\leq \widehat{a_\lambda}(\lambda)
= g(\lambda)+\tfrac{\varepsilon}{2}.
\]
This implies that $0\leq g(\lambda)+\tfrac{\varepsilon}{2}-h(\lambda)$ for all $\lambda\in\partial_e K$.
Since the function $g+\tfrac{\varepsilon}{2}-h$ belongs to $\LAff(K)$, we may apply \autoref{prp:LAffOrderExtr} to deduce that $0\leq g+\tfrac{\varepsilon}{2}-h$.
Thus, $h\leq g+\tfrac{\varepsilon}{2}$.

By \autoref{prp:llInLAff}, we have $-g-\varepsilon\ll -g-\tfrac{\varepsilon}{2}$.
Hence, the function $g+\varepsilon$ is way-above $g+\tfrac{\varepsilon}{2}$ in the sense that for every downward directed subset $D$ of upper semicontinuous functions with $\inf D\leq g+\tfrac{\varepsilon}{2}$, there exists $d\in D$ with $d\leq g+\varepsilon$.
Thus, we obtain $M\in \mathcal{F}$ such that $h_M\leq g+\varepsilon$.
Applying \autoref{prp:fctlInf}, we obtain $a\in S$ such that
\[
\widehat{a}_{|K} = \bigwedge_{\lambda\in M} \widehat{a_\lambda'}_{|K}.
\]
For each $\lambda\in M$, we have $g\leq \widehat{a_\lambda'}_{|K}$ and therefore $g\leq\widehat{a}_{|K}$.
Then
\[
g \leq \widehat{a}_{|K}
= \bigwedge_{\lambda\in M} \widehat{a_\lambda'}_{|K}
\leq \bigwedge_{\lambda\in M} g_\lambda
= h_M
\leq g+\varepsilon,
\]
which shows that $a$ has the desired properties.
\end{proof}

\begin{rmk}
\label{rmk:realizeRankApprox}
In the proof of \autoref{prp:realizeRankApprox}, the assumption \axiomO{6+} is only used to realize functional infima over $K$.
\end{rmk}

\begin{lma}
\label{prp:LfUpDirected}
For every $f\in\LAff(K)_{++}$, the set
\[
L_f' := \big\{ a'\in S : \exists\, a\in S \text{ with } a'\ll a, \text{ and } \widehat{a}_{|K} \ll f \text{ in } \LAff(K)\big\},
\]
is upward directed.
\end{lma}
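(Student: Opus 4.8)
The plan is to prove upward directedness by producing, for any $a_1',a_2'\in L_f'$, a single common upper bound that again lies in $L_f'$. Fix witnesses $a_1,a_2\in S$ with $a_i'\ll a_i$ and $\widehat{a_i}_{|K}\ll f$. Since $\LAff(K)$ is a countably based domain (\autoref{prp:llInLAff}), its way-below sets are upward directed, so I can choose $h\in\Aff(K)_{++}$ with $\widehat{a_1}_{|K},\widehat{a_2}_{|K}\le h\ll f$. It then suffices to construct $c\in S$ with $a_1,a_2\le c$ and $\widehat{c}_{|K}\ll f$: indeed $a_i'\ll a_i\le c$ gives $a_i'\ll c$, and since the way-below predecessors of $c$ form a directed set, some $c'\ll c$ satisfies $a_1',a_2'\le c'$; this $c'$, witnessed by $c$, lies in $L_f'$ and dominates both given elements.

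To build such a $c$ I would reuse the chisel-and-compactness machinery of \autoref{prp:realizeRankApprox}, arranged so that genuine order domination is retained. For each $\lambda\in\partial_e K$, \autoref{prp:realizeChisel} realizes the chisel $h(\lambda)+\tfrac{\varepsilon}{2}+\sigma_\lambda$ as $\widehat{a_\lambda}_{|K}$, where the key point is that $a_\lambda$ is the supremum of the directed set $\{x\in S:\lambda(x)<h(\lambda)+\tfrac{\varepsilon}{2}\}$ (whose directedness is exactly the use of \autoref{prp:EdwardsDual}). Because $h(\lambda)\ge\max\{\lambda(a_1),\lambda(a_2)\}$, both $a_1$ and $a_2$ belong to this set, so $a_1,a_2\le a_\lambda$; thus each chisel element is an honest order upper bound of the witnesses. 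Passing to soft way-below approximants $a_\lambda'$ and affine sandwiches $g_\lambda$ with $\widehat{a_\lambda'}_{|K}\le g_\lambda\le\widehat{a_\lambda}_{|K}$ and $g_\lambda(\lambda)\le h(\lambda)+\tfrac{\varepsilon}{2}$, the same upper-semicontinuity and compactness argument as in \autoref{prp:realizeRankApprox} yields a finite set $M\subseteq\partial_e K$ with $\bigwedge_{\lambda\in M}g_\lambda\le h+\varepsilon\ll f$. Realizing the finite functional infimum over $M$ via iterated application of \autoref{prp:fctlInf} then produces $c$ with $\widehat{c}_{|K}\le h+\varepsilon\ll f$.

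The main obstacle, and where I expect to spend the most care, is ensuring that this $c$ is an order upper bound of $a_1',a_2'$ and not merely a rank-dominant one: without almost unperforation, $\widehat{a_i'}_{|K}\le\widehat{c}_{|K}$ does not give $a_i'\le c$. The remedy is to feed domination into the functional-infimum construction itself. Since $a_1,a_2$ are common lower bounds of the family being met (at the chisel level, and, after a further softening step, one can keep them below the approximants), the elements $a_1',a_2'$ are way-below common lower bounds, and the construction underlying \autoref{prp:fctlInfPre} can be seeded with their way-below approximants to force $a_1',a_2'\le c$. The delicate point is dominating $a_1'$ and $a_2'$ \emph{simultaneously}: the argument of \autoref{prp:prePseudoInf} threads a single interlaced increasing chain $c=\sup_n x_n'=\sup_n y_n'$, so I would run that construction tracking two seeds at once—assigning increasing approximants of $a_1'$ to the $x_n'$-chain and of $a_2'$ to the $y_n'$-chain—and verify that the rank estimate $\widehat{c}_{|K}\le h+\varepsilon$ survives this modification. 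Once $a_1',a_2'\le c$ with $\widehat{c}_{|K}\ll f$ is established, the reduction of the first paragraph produces the required common upper bound in $L_f'$.
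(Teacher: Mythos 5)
Your reduction in the first paragraph is correct, and the observation that $a_1,a_2\leq a_\lambda$ for every chisel element (because $a_\lambda$ is the supremum of the upward directed set $\{x:\lambda(x)<t\}$, which contains both witnesses) is accurate. But the point you yourself flag as delicate is a genuine gap, and your proposed remedy does not close it. The construction underlying \autoref{prp:fctlInfPre} and \autoref{prp:prePseudoInf} threads a \emph{single} interlaced chain $x_1'\ll y_1'\ll x_2'\ll\cdots$ with $c=\sup_n x_n'=\sup_n y_n'$; each application of \axiomO{6+} takes one pair $x'\ll x\leq a,b$ and returns $e$ with $x'\ll e\leq a,b$, so every new chain element is only guaranteed to dominate its immediate predecessor. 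Running the construction ``tracking two seeds at once'' therefore fails at the first step: to insert a second, independent seed into the chain you would need a common upper bound of the two seeds sitting below both elements being infimized, and the existence of such a bound is essentially the directedness statement you are trying to prove. (Your softening remark does not help: one can arrange $a_1',a_2'\leq a_\lambda'\ll a_\lambda$ for each $\lambda$ separately, but the element realizing $\bigwedge_{\lambda\in M}\widehat{a_\lambda'}_{|K}$ is constructed \emph{below} the inputs, so $c\leq a_\lambda'$ gives no relation whatsoever between $c$ and the seeds, and the only seed mechanism available carries one element.) Note also that your argument never invokes weak cancellation, which is a standing hypothesis in this section and is exactly what the correct proof needs.

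The paper circumvents the two-seed obstruction by a complementation trick rather than by dominating the seeds directly. Choose $g\in\Aff(K)$ and $\varepsilon>0$ with $\widehat{a}_{|K},\widehat{b}_{|K}\leq g$ and $g+\varepsilon\ll f$, and $n$ with $a,b\leq nu$ and $g\leq\widehat{nu}_{|K}$, writing $\widehat{nu}_{|K}=g+h$. Applying \axiomO{5} produces complements $r,s$ with $a'+r\leq nu\leq a+r$ and $b'+s\leq nu\leq b+s$, whence $h\leq\widehat{r}_{|K},\widehat{s}_{|K}$. Now \autoref{prp:fctlInf} is applied to the \emph{complements}: one gets $z\leq r,s$ with $h\leq\widehat{z}_{|K}$, picks $z'\ll z$ with $h-\varepsilon\leq\widehat{z'}_{|K}$, and complements back via \axiomO{5} to obtain $c$ with $z'+c\leq nu\leq z+c$. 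Then $a'+r\leq nu\ll nu\leq z+c\leq r+c$, and weak cancellation yields $a'\ll c$ (similarly $b'\ll c$), while $\widehat{z'}_{|K}+\widehat{c}_{|K}\leq g+h$ forces $\widehat{c}_{|K}\leq g+\varepsilon\ll f$. In short: domination of both seeds comes for free from cancelling the complements, which is precisely the device your outline is missing; without it (or some genuine substitute for the two-seed problem) the proof does not go through, even though the surrounding steps of your outline are sound.
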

\begin{proof}
Let $f\in\LAff(K)_{++}$.
To show that $L_f'$ is upward directed, let $a',a,b',b\in S$ satisfy
\[
a'\ll a,\andSep \widehat{a}_{|K}\ll f, \andSep b'\ll b,\andSep \widehat{b}_{|K}\ll f.
\]
We need to find $c',c\in S$ such that $a',b'\leq c'\ll c$ and $\widehat{c}_{|K}\ll f$.


By \autoref{prp:llInLAff}, $\LAff(K)$ is a domain, which allows us to choose $f'$ such that $\widehat{a}_{|K},\widehat{b}_{|K}\leq f'\ll f$.
Apply \autoref{prp:llInLAff} to choose $g\in\Aff(K)$ and $\varepsilon>0$ such that $f'\leq g$ and $g+2\varepsilon \leq f$. 
Note that $\widehat{a}_{|K},\widehat{b}_{|K}\leq g$ and $g+\varepsilon \ll f$.
We may also assume that $a,b\ll\infty$.
This allows us to choose $n\in\NN$ such that $a,b\leq nu$ and $g\leq \widehat{nu}_{|K}$.
Then there exists $h\in\Aff(K)$ such that $g+h=\widehat{nu}_{|K}$.

Apply \axiomO{5} to obtain $r,s\in S$ such that
\[
a'+r \leq nu \leq a+r,\andSep b'+s \leq nu \leq b+s.
\]
Then
\[
g+h = \widehat{nu}_{|K} \leq \widehat{a}_{|K} + \widehat{r}_{|K} \leq g + \widehat{r}_{|K},
\]
and therefore $h\leq\widehat{r}_{|K}$.
Similarly, we deduce that $h\leq\widehat{s}_{|K}$.
Applying \autoref{prp:fctlInf}, we obtain $z\in S$ satisfying
\[
h\leq\widehat{z}_{|K}, \andSep z\leq r,s.
\]
Choose $z'\in S$ with $z'\ll z$ and $h-\varepsilon \leq \widehat{z'}_{|K}$.
Apply \axiomO{5} to obtain $c\in S$ such that
\[
z'+c\leq nu\leq z+c.
\]
Then
\[
a'+r \leq nu \ll nu \leq z+c \leq r+c.
\]
Since $S$ is weakly cancellative, we deduce that $a'\ll c$.
Similarly, $b'\ll c$.
Then
\[
h-\varepsilon + \widehat{c}_{|K}
\leq \widehat{z'}_{|K} + \widehat{c}_{|K}
\leq \widehat{nu}_{|K}
= g+h,
\]
and thus $\widehat{c}_{|K}\leq g+\varepsilon \ll f$.
Choose $c'\ll c$ such that $a',b'\leq c'$.
Then $c'$ and $c$ have the desired properties, which shows that $L_{f}'$ is upward directed.
\end{proof}

By \autoref{prp:ctblBasedDCPO}, every upward directed set in a countably based \CuSgp{} has a supremum.
This justifies the following:

\begin{dfn}
\label{dfn:alpha}
We define $\alpha\colon\LAff(K)_{++}\to S$ by
\[
\alpha(f) := \sup L_f',
\]
for $f\in\LAff(K)_{++}$.
\end{dfn}

\begin{lma}
\label{prp:wayBelowSoft}
Let $a,b\in S$ satisfy $a\ll b$, and assume that $b$ is soft.
Then $\widehat{a}_{|K}\ll\widehat{b}_{|K}$ in $\LAff(K)$.
\end{lma}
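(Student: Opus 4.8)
The plan is to verify the criterion of \autoref{prp:llInLAff}: I would produce $h\in\Aff(K)$ and $\varepsilon>0$ with $\widehat{a}_{|K}+\varepsilon\le h\le\widehat{b}_{|K}$. I may assume $b\neq 0$, since otherwise $a=b=0$. Using $a\ll b$ and \axiomO{2}, I would interpolate to obtain $c\in S$ with $a\ll c\ll b$, and since $b\neq 0$ I may take $c\neq 0$. The proof then combines two features of this configuration: a \emph{uniform scalar gap} coming from the softness of $b$, and an \emph{upper-semicontinuity} property of $\widehat a$ coming from $a\ll c$ via the topology on $F(S)$.

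First I would record the estimates. Since $u$ is nonzero and $S$ is simple, $\infty u$ is the largest element of $S$ (\autoref{pgr:CuFurther}), so $b\le\infty u=\sup_m mu$; as $c\ll b$ this gives $c\le Mu$ for some $M$, whence $\widehat c\le M$ and likewise $\widehat a\le N$ are bounded on $K$. As $c\neq 0$ and $S$ is simple, $\lambda(c)>0$ for every $\lambda\in K$ (otherwise $\lambda(\infty c)=0$ while $\infty c\ge u$ forces $\lambda(\infty c)\ge\lambda(u)=1$); being lower semicontinuous on the compact set $K$, the function $\widehat c$ then attains a strictly positive minimum $\eta:=\min_K\widehat c>0$. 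Finally, $c\ll b$ with $b$ soft yields $c<_s b$, say $(k+1)c\le kb$; applying functionals gives $\widehat b\ge\tfrac{k+1}{k}\widehat c\ge\widehat c+\tfrac1k\eta$, so with $\varepsilon_1:=\eta/k>0$ I obtain $\widehat c\le\widehat b-\varepsilon_1$ on $K$.

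A scalar gap between $\widehat a$ and $\widehat b$ is by itself not enough to interpolate by a continuous affine function: a bounded lower semicontinuous affine function may jump upward at isolated extreme points, obstructing any continuous minorant of $\widehat b$ that dominates it. This is the crux, and it is resolved using $a\ll c$. By the description of the topology on $F(S)$ (\autoref{pgr:CuFurther}), for every net $\lambda_j\to\lambda$ in $K$ one has $\limsup_j\lambda_j(a)\le\lambda(c)$, since $a\ll c$. Hence the upper regularization $\psi(\lambda):=\inf_{V\ni\lambda}\sup_{\mu\in V}\widehat a(\mu)$ of $\widehat a$ is an upper semicontinuous, bounded function with $\widehat a\le\psi\le\widehat c\le\widehat b-\varepsilon_1$ on $K$.

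It remains to extract a single continuous affine interpolant by a Dini-type argument. I would write $\widehat b=\sup_i g_i$ as the supremum of an increasing net $(g_i)$ in $\Aff(K)$ (\autoref{prp:LAffApproxByAff}) and set $\tilde g_i:=g_i-\varepsilon_1/2$, so $\sup_i\tilde g_i=\widehat b-\varepsilon_1/2$. Since $\psi\le\widehat b-\varepsilon_1$ and $\psi$ is bounded, one has $\psi<\sup_i\tilde g_i$ pointwise on $K$, with each set $U_i:=\{\tilde g_i>\psi\}$ open (as $\tilde g_i-\psi$ is lower semicontinuous). The $U_i$ cover $K$, so by compactness and directedness of the net there is a single index $i_0$ with $U_{i_0}=K$, that is $g_{i_0}-\varepsilon_1/2\ge\psi\ge\widehat a$. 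Then $h:=g_{i_0}\in\Aff(K)$ satisfies $\widehat a+\tfrac{\varepsilon_1}{2}\le h\le\widehat b$, and \autoref{prp:llInLAff} gives $\widehat a_{|K}\ll\widehat b_{|K}$. The main obstacle is exactly the content of the third paragraph: recognizing that the scalar gap must be supplemented by the upper-semicontinuity of $\widehat a$ relative to $\widehat c$ coming from $a\ll c$, and then feeding both into the Dini argument. I note that softness of $b$ is used only to produce the uniform gap, and that neither \axiomO{6+} nor Edwards' condition enters this lemma.
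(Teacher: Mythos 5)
Your proof is correct, but it takes a genuinely different route from the paper's, which is essentially a two-line citation: after the same opening move (interpolate $a\ll c\ll b$ and use softness to get $c<_s b$, hence $(1+\tfrac1k)\widehat{c}\leq\widehat{b}$), the paper invokes \cite[Lemma~2.2.5]{Rob13Cone} to conclude $\widehat{a}\ll(1+\tfrac1k)\widehat{c}\leq\widehat{b}$ in the cone of lower semicontinuous functionals on $F(S)$, and then restricts to $K$ using that, by simplicity, restriction is an order-isomorphism onto $\LAff(K)_{++}\cup\{0\}$. You instead reprove the needed compactness statement from scratch inside $\LAff(K)$, working directly with the characterization of $\ll$ from \autoref{prp:llInLAff}: softness supplies the uniform additive gap $\varepsilon_1=\eta/k$ (your derivation of $\eta=\min_K\widehat{c}>0$ from simplicity, fullness of $u$, and lower semicontinuity on the compact set $K$ is correct), the $\limsup$ half of the convergence criterion for the topology on $F(S)$ applied to the pair $a\ll c$ dominates the upper semicontinuous envelope $\psi$ of $\widehat{a}$ by $\widehat{c}$, and the Dini-type covering argument against an increasing net in $\Aff(K)$ with supremum $\widehat{b}_{|K}$ (valid by \autoref{prp:LAffApproxByAff}) extracts the continuous affine interpolant. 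In effect you re-derive the restriction to $K$ of Robert's Lemma~2.2.5; what your route buys is self-containedness and a sharp isolation of the two inputs — your observation that a scalar gap alone cannot rule out upward jumps of $\widehat{a}$, so that the usc control coming from $a\ll c$ is indispensable, is exactly the right diagnosis — while the paper's route buys brevity and the extra generality of Robert's lemma, which needs neither simplicity nor the compact base $K$. One pedantic point, which the paper's own proof also glosses: your reduction ``I may assume $b\neq0$'' silently discards the case $a=b=0$, where the assertion $\widehat{0}_{|K}\ll\widehat{0}_{|K}$ is literally false in $\LAff(K)$ by \autoref{prp:llInLAff} (the negative constant functions form a directed set with supremum $0$); the statement is true in $\LAff(K)_{++}\cup\{0\}$, which is where the paper's restriction isomorphism lands, and in all applications in \autoref{sec:rankCu} either $b$ is nonzero or the required conclusion $\widehat{0}\ll f$ holds directly for $f\in\LAff(K)_{++}$, so nothing downstream is affected.
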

\begin{proof}
Choose $c\in S$ with $a\ll c \ll b$.
Since $b$ is soft, we have $c<_s b$.
This implies that we can choose $\varepsilon>0$ such that $(1+\varepsilon)\widehat{c}\leq\widehat{b}$.
By \cite[Lemma~2.2.5]{Rob13Cone},
\[
\widehat{a} \ll (1+\varepsilon)\widehat{c} \leq \widehat{b}.
\]
Since $S$ is simple, the map $\LAff(F(S))\to\LAff(K)$, given by restricting a function to $K$, is an order-isomorphism.
It follows that $\widehat{a}_{|K}\ll \widehat{b}_{|K}$ in $\LAff(K)$.
\end{proof}

\begin{prp}
\label{prp:maxSoft}
Let $f\in\LAff(K)_{++}$.
Consider the sets
\[
L_f := \big\{ a\in S :  \widehat{a}_{|K}\ll f \text{ in } \LAff(K) \big\},\andSep
S_f = \big\{ a\in S_\txtSoft :  \widehat{a}_{|K}\leq f \big\}.
\]
Then $\alpha(f)=\sup L_f = \max S_f$.
In particular, $\alpha(f)$ is soft.
\end{prp}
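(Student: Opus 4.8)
The plan is to verify the two identities $\alpha(f)=\sup L_f$ and $\sup L_f=\max S_f$ separately, extracting the softness of $\alpha(f)$ as the decisive final step, since it is precisely softness that promotes the upper-bound property into a maximum.

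First I would check $\sup L_f'=\sup L_f$, matching \autoref{dfn:alpha} with the first claimed supremum. The inequality $\le$ is immediate, as each $a'\in L_f'$ is witnessed by some $a\in L_f$ with $a'\ll a$; for $\ge$, every $a\in L_f$ is the supremum of a $\ll$-increasing sequence $(a_n')_n$, and each $a_n'$ lies in $L_f'$ (witnessed by $a$), so $a\le\sup L_f'$. Next I would record $\widehat{\alpha(f)}_{|K}\le f$: writing $\alpha(f)=\sup_n d_n$ with $d_n\ll\alpha(f)$ and using that $L_f'$ is upward directed (\autoref{prp:LfUpDirected}) in the domain $S$ (\autoref{prp:ctblBasedDCPO}), each $d_n$ satisfies $d_n\le b$ for some $b\in L_f'$; choosing $a$ with $b\ll a$ and $\widehat{a}_{|K}\ll f$, any $\lambda\in K$ gives $\lambda(d_n)\le\lambda(a)=\widehat{a}(\lambda)\le f(\lambda)$, so $\lambda(\alpha(f))=\sup_n\lambda(d_n)\le f(\lambda)$.

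I would then show that $\alpha(f)$ is an upper bound for $S_f$. Given $c\in S_f$, write $c=\sup_n c_n$ with $c_n\ll c$; since $c$ is soft, \autoref{prp:wayBelowSoft} yields $\widehat{c_n}_{|K}\ll\widehat{c}_{|K}\le f$ in $\LAff(K)$, so $c_n\in L_f$ and hence $c_n\le\sup L_f=\alpha(f)$, giving $c\le\alpha(f)$.

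The hard part is the softness of $\alpha(f)$, for which I would argue by contradiction using the dichotomy of \autoref{prp:structureSimpleCu}(1). The element $\alpha(f)$ is nonzero, because \autoref{prp:realizeRankApprox} produces a nonzero element of small constant rank lying in $L_f$; so $\alpha(f)$ is either soft or compact. Suppose it were compact. Then $\alpha(f)\ll\alpha(f)=\sup L_f'$ with $L_f'$ directed forces $\alpha(f)\le b\le\alpha(f)$ for some $b\in L_f'$, so $\alpha(f)\in L_f'$ and in particular $\widehat{\alpha(f)}_{|K}\ll f$. Using \autoref{prp:llInLAff} I would choose $h\in\Aff(K)$ and $\varepsilon>0$ with $\widehat{\alpha(f)}_{|K}+\varepsilon\le h\le f$, and using \autoref{prp:realizeRankApprox} together with the predecessor map of \autoref{prp:realizeRankBySoft} I would produce a nonzero soft element $s$ with $\widehat{s}_{|K}\le\varepsilon$. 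Then $\alpha(f)+s$ is soft (soft elements are absorbing in a simple \CuSgp, \autoref{pgr:soft}) and satisfies $\widehat{\alpha(f)+s}_{|K}\le h\le f$, so $\alpha(f)+s\in S_f$ and therefore $\alpha(f)+s\le\alpha(f)$ by the previous paragraph. As a compact element in a stably finite \CuSgp{} is finite, the resulting relation $\alpha(f)+s=\alpha(f)$ with $s\ne0$ is absurd. Hence $\alpha(f)$ is soft, so $\alpha(f)\in S_f$; combined with the upper-bound property this gives $\alpha(f)=\max S_f$, and the softness is exactly the concluding assertion. I expect this compactness-exclusion step to be the main obstacle, the delicate points being that the perturbation $s$ must keep the rank under $f$ yet stay nonzero, and that weak cancellation (hence stable rank one) enters only indirectly, through the predecessor map supplying $s$.
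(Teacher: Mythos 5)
Your proof is correct and follows essentially the same route as the paper's: the upper-bound property for $S_f$ via \autoref{prp:wayBelowSoft}, the compact/soft dichotomy from \autoref{prp:structureSimpleCu}, and the exclusion of compactness by adding a nonzero soft element of rank at most $\varepsilon$ to contradict stable finiteness. The only cosmetic deviations are that you replace the paper's countable-basis extraction of an increasing sequence in $L_f'$ by a direct appeal to the directedness of $L_f'$ together with \autoref{prp:ctblBasedDCPO}, and that you source the nonzeroness of $\alpha(f)$ and the small soft perturbation from \autoref{prp:realizeRankApprox} and the predecessor map, where the paper uses \autoref{prp:structureSimpleCu} and \autoref{prp:realizeRankBySoft} directly -- both variants are valid under the standing assumptions of the section.
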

\begin{proof}
To verify that $\alpha(f)$ is an upper bound for $L_{f}$, let $a\in L_{f}$.
For every $a'\in S$ with $a'\ll a$ we have $a'\in L_{f}'$ and therefore $a'\leq\sup L_{f}'=\alpha(f)$.
Using $a=\sup\{a':a'\ll a\}$, we deduce that $a\leq\alpha(f)$.
Since $L_{f}'\subseteq L_{f}$, we deduce that $\sup L_{f}$ exists and agrees with $\sup L_{f}'$.

To verify that $\alpha(f)$ is an upper bound for $S_f$, let $a\in S_f$.
For every $a'\in S$ with $a'\ll a$ we have $\widehat{a'}_{|K}\ll\widehat{a}_{|K}$ by \autoref{prp:wayBelowSoft}, which implies that $a'\in L_f$ and thus $a'\leq\alpha(f)$.
Hence, $a\leq\alpha(f)$.

Next, we verify that every element in $L_f$ is below an element in $S_f$.
Let $a\in L_f$.
Then $\widehat{a}_{|K}\ll f$.
Applying \autoref{prp:llInLAff}, we obtain $\varepsilon>0$ such that $\widehat{a}+\varepsilon\leq f$.
By \autoref{prp:structureSimpleCu} and \autoref{prp:realizeRankBySoft}, we can choose $x\in S_\txtSoft$ nonzero with $\widehat{x}_{|K}\leq \varepsilon$.
Since $S_\txtSoft^\times$ is absorbing, it follows that $a+x$ is soft;
see \autoref{pgr:soft}.
Further, $\widehat{a+x}_{|K}\leq f$.
Thus, $a\leq a+x\in S_f$. 
It follows that $\sup S_f=\sup L_f=\alpha(f)$.

Let $B$ be a countable basis for $S$.
To show that $B\cap L_f'$ is upward directed, let $b_1,b_2\in B\cap L_f'$.
Use that $L_f'$ is upward directed to obtain $a'\in L_f'$ with $b_1,b_2\leq a'$.
By definition of $L_f'$, there is $a\in S$ with $a'\ll a$ and $\widehat{a}_{|K} \ll f$.
Choose $b\in B$ with $a'\ll b\ll a$.
Then $b$ belongs to $B\cap L_f'$ and satisfies $a_1,a_2\leq b$, as desired.

We can therefore choose an increasing sequence $(a_n)_n$ in $L_f'$ with $\alpha(f)=\sup_n a_n$.
Since every $a_n$ satisfies $\widehat{a_n}_{|K}\leq f$, we have
\[
\widehat{\alpha(f)}_{|K} = \sup_{n} \widehat{a_n}_{|K} \leq f.
\]
It follows from \autoref{prp:structureSimpleCu} that $L_f'\neq\{0\}$, and thus $\alpha(f)\neq 0$.
Hence, $\alpha(f)$ is either compact or soft.
But if $\alpha(f)$ were compact, then there would be $n$ such that $\alpha(f)\leq a_n$, and therefore  $\widehat{\alpha(f)}_{|K}\ll f$.
Arguing as above, we would obtain $x\in S_\txtSoft$ nonzero such that $\alpha(f)+x\in S_f$, and hence $\alpha(f)+x\leq \alpha(f)$, contradicting that $S$ is stably finite.
It follows that $\alpha(f)$ is soft.
Thus, $\alpha(f)$ belongs to $S_f$ and is therefore the maximal element in $S_f$.
\end{proof}

\begin{thm}
\label{prp:realizeRank}
We have $\widehat{\alpha(f)}=f$, for every $f\in\LAff(K)_{++}$.
In particular, for every $f\in\LAff(K)_{++}$, there exists $a\in S$ with $\widehat{a}_{|K}=f$.
\end{thm}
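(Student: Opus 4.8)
The inequality $\widehat{\alpha(f)}_{|K}\le f$ was already established in the course of proving \autoref{prp:maxSoft}, so the entire content is the reverse inequality $f\le\widehat{\alpha(f)}_{|K}$ (here I freely identify $\widehat{\alpha(f)}$ with its restriction to $K$, using that restriction is an order-isomorphism $\LAff(F(S))\cong\LAff(K)$ by simplicity of $S$, as in the proof of \autoref{prp:wayBelowSoft}). The plan is to squeeze $f$ from below by strictly positive, \emph{continuous} affine functions that are way-below $f$, realize each of these approximately by an element of $S$ via \autoref{prp:realizeRankApprox}, and then observe that every such element lies below $\alpha(f)$ because of the maximality built into \autoref{prp:maxSoft}. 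Passing to suprema then upgrades approximate realization to exact realization; this is the ``closure under increasing suprema'' step announced in the introduction.

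First I would produce a suitable approximating sequence. Since $f$ is lower semicontinuous and strictly positive on the compact set $K$, it attains a strictly positive minimum, so $f\ge c$ for some constant $c>0$. Using \autoref{prp:LAffApproxByAff} together with the metrizability of $K$ (which holds since $S$ is countably based), I would write $f$ as the supremum of an increasing sequence $(g_n)_n$ in $\Aff(K)$; the open sets $\{g_n>0\}$ form an increasing cover of $K$ because $f\ge c>0$, so by compactness all but finitely many $g_n$ lie in $\Aff(K)_{++}$, and after discarding the rest I may assume each $g_n\in\Aff(K)_{++}$ with $g_0\ge c_0>0$. Choosing a decreasing sequence $\varepsilon_n>0$ with $\varepsilon_0<c_0$ and $\varepsilon_n\to 0$, I set $h_n:=g_n-\varepsilon_n$. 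Then $(h_n)_n$ is increasing, lies in $\Aff(K)_{++}$, satisfies $\sup_n h_n=f$, and each $h_n\ll f$ since $h_n+\varepsilon_n=g_n\le f$, invoking the way-below characterization of \autoref{prp:llInLAff}.

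Now I would fix such an $h_n$ and pick $\varepsilon>0$ with $h_n+\varepsilon\le f$. Applying \autoref{prp:realizeRankApprox} to $h_n$ and $\tfrac{\varepsilon}{2}$ yields $a_n\in S$ with $h_n\le\widehat{a_n}_{|K}\le h_n+\tfrac{\varepsilon}{2}$. Then $\widehat{a_n}_{|K}+\tfrac{\varepsilon}{2}\le h_n+\varepsilon\le f$, so $\widehat{a_n}_{|K}\ll f$ by \autoref{prp:llInLAff}; that is, $a_n\in L_f$ in the notation of \autoref{prp:maxSoft}. Since $\alpha(f)=\sup L_f$ is an upper bound for $L_f$, we get $a_n\le\alpha(f)$, whence $h_n\le\widehat{a_n}_{|K}\le\widehat{\alpha(f)}_{|K}$. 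Taking the supremum over $n$ gives $f=\sup_n h_n\le\widehat{\alpha(f)}_{|K}$, and combined with the reverse inequality this yields $\widehat{\alpha(f)}_{|K}=f$. The final ``in particular'' is then immediate by taking $a:=\alpha(f)$.

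The genuinely substantive ingredients---Edwards' condition, axiom \axiomO{6+}, and the realization of chisels and of functional infima---have all been absorbed into \autoref{prp:realizeRankApprox} and \autoref{prp:maxSoft}, so at this stage the argument is purely a limiting/assembly step with no further structural input. The only point demanding care, and the one I expect to be the mild obstacle, is arranging the approximants to be \emph{simultaneously} strictly positive, continuous affine, and way-below $f$: this is exactly where the strict positivity of $f$ (giving the positive lower bound) and the compactness of $K$ are used, and it is the sole place the argument would break down if $f$ were only assumed nonnegative rather than strictly positive.
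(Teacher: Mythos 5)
Your proof is correct and takes essentially the same route as the paper's: both establish $\widehat{\alpha(f)}_{|K}\leq f$ via \autoref{prp:maxSoft}, approximate $f$ from below by an increasing sequence $g_n-\varepsilon_n$ with $g_n\in\Aff(K)_{++}$ and $\varepsilon_n\downarrow 0$, realize each term approximately by some $a_n\in S$ via \autoref{prp:realizeRankApprox}, verify $\widehat{a_n}_{|K}\ll f$ by the characterization in \autoref{prp:llInLAff} so that $a_n\in L_f$ and hence $a_n\leq\alpha(f)=\sup L_f$, and pass to suprema. The only difference is that you spell out the routine justification that the continuous affine approximants can be taken strictly positive (using the positive lower bound of $f$ and compactness of $K$), which the paper leaves implicit.
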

\begin{proof}
Let $f\in\LAff(K)_{++}$.
As observed in the proof of \autoref{prp:maxSoft}, we have $\widehat{\alpha(f)}\leq f$.
To show the converse inequality, choose an increasing sequence $(g_n)_n$ in $\Aff(K)_{++}$ with supremum $f$.
Choose a decreasing sequence $(\varepsilon_n)_n$ that converges to zero.
We may assume that $g_0-\varepsilon_0$ is strictly positive.
We have $\sup_n (g_n-\varepsilon_n) = f$.

For each $n$, apply \autoref{prp:realizeRankApprox} to obtain $a_n$ satisfying $g_n-\varepsilon_n\leq\widehat{a_n}_{|K}\leq g_n-\tfrac{\varepsilon_n}{2}$.
Using \autoref{prp:llInLAff} at the second step, we have
\[
\widehat{a_n}_{|K}\leq g_n-\tfrac{\varepsilon_n}{2} \ll f.
\]
Thus, $a_n$ belongs to $L_f$, as defined in \autoref{prp:maxSoft}, which implies $a_n\leq\sup L_f=\alpha(f)$.
It follows that
\[
f = \sup_n (g_n-\varepsilon_n)
\leq \sup_n \widehat{a_n}_{|K}
\leq \widehat{\alpha(f)}_{|K},
\]
as desired.
\end{proof}

\begin{pgr}
\label{pgr:GaloisAlpha}
Consider the map $\alpha\colon\LAff(K)_{++}\to S$ as defined in \autoref{dfn:alpha}.
By \autoref{prp:maxSoft}, the image of $\alpha$ is contained in $S_\txtSoft$.
Set $S_\txtSoft^\times:= S_\txtSoft\setminus\{0\}$ and define $\kappa\colon S_\txtSoft^\times\to \LAff(K)_{++}$ by $\kappa(a):=\widehat{a}_{|K}$.
Using \autoref{prp:realizeRank} and \autoref{prp:maxSoft} it follows that $\kappa\circ\alpha=\id$ and $\alpha\circ\kappa\leq\id$.
Given $a\in S_\txtSoft^\times$ and $f\in\LAff(K)_{++}$, we have $a\leq\alpha(f)$ if and only if $\kappa(a)\leq f$.
Thus, $\alpha$ and $\kappa$ form a (order theoretic) Galois connection between $\LAff(K)_{++}$ and $S_\txtSoft^\times$;
see \cite[Definition~O-3.1, p.22]{GieHof+03Domains}.
The map $\alpha$ is the upper adjoint, and the map $\kappa$ is the lower adjoint.
\end{pgr}

\begin{prp}
\label{prp:alpha}
The map $\alpha\colon\LAff(K)_{++}\to S_\txtSoft$ preserves order, infima and suprema of directed sequences.
\end{prp}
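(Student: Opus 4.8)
The plan is to treat the three assertions separately: order-preservation and preservation of infima are formal consequences of the Galois connection recorded in \autoref{pgr:GaloisAlpha}, while preservation of suprema of directed sequences is the genuine content and must be argued by hand, since upper adjoints need not preserve suprema.

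\emph{Order and infima.} Order-preservation is immediate from the definition: if $f\leq g$ in $\LAff(K)_{++}$, then $L_f'\subseteq L_g'$, whence $\alpha(f)=\sup L_f'\leq\sup L_g'=\alpha(g)$; alternatively it is automatic because $\alpha$ is the upper adjoint of a Galois connection. For infima, recall from \autoref{pgr:GaloisAlpha} that $\alpha$ and $\kappa$ satisfy $a\leq\alpha(f)$ if and only if $\kappa(a)\leq f$, with $\alpha$ the upper adjoint. Since $K$ is a Choquet simplex, $\LAff(K)_{++}$ is an inf-semilattice by \autoref{prp:LAffRiesz}, so $f\wedge g$ exists for $f,g\in\LAff(K)_{++}$. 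I would then run the standard upper-adjoint computation: order-preservation gives $\alpha(f\wedge g)\leq\alpha(f),\alpha(g)$, and if $p\in S_\txtSoft^\times$ satisfies $p\leq\alpha(f),\alpha(g)$, then $\kappa(p)\leq f,g$, hence $\kappa(p)\leq f\wedge g$ (as $\kappa(p)\in\LAff(K)_{++}$ is a lower bound of $f,g$), hence $p\leq\alpha(f\wedge g)$. Thus $\alpha(f\wedge g)$ is the infimum of $\alpha(f)$ and $\alpha(g)$ in $S_\txtSoft^\times$.

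\emph{Suprema of directed sequences.} Let $(f_n)_n$ be a directed sequence in $\LAff(K)_{++}$ with supremum $f$. Each $f_n\leq f$ gives $\alpha(f_n)\leq\alpha(f)$, and since $(\alpha(f_n))_n$ is directed and $S$ is countably based, $c:=\sup_n\alpha(f_n)$ exists by \autoref{prp:ctblBasedDCPO} and satisfies $c\leq\alpha(f)$. For the reverse inequality I would use the identity $\alpha(f)=\sup L_f$ from \autoref{prp:maxSoft}, reducing the task to showing $a\leq c$ for every $a\in L_f$. Fixing such an $a$, we have $\widehat{a}_{|K}\ll f$ in $\LAff(K)$; since $\LAff(K)$ is a domain by \autoref{prp:llInLAff}, I interpolate $\widehat{a}_{|K}\ll g\ll f$. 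As $f=\sup_n f_n$ with $(f_n)_n$ directed, the way-below relation produces $N$ with $g\leq f_N$, so $\widehat{a}_{|K}\ll f_N$, that is, $a\in L_{f_N}$. Applying \autoref{prp:maxSoft} again yields $a\leq\sup L_{f_N}=\alpha(f_N)\leq c$. Hence $\alpha(f)=\sup L_f\leq c$, and together with $c\leq\alpha(f)$ we obtain $\alpha(f)=\sup_n\alpha(f_n)$.

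\emph{Main obstacle.} The one nonformal point is the inequality $\alpha(f)\leq c$, whose crux is the interpolation $\widehat{a}_{|K}\ll g\ll f$ combined with the way-below relation absorbing the directed sequence; this works precisely because $K$ is a metrizable Choquet simplex (\autoref{prp:FSChoquet}), so $\LAff(K)$ is a countably based domain in which $\ll$ is interpolative and coincides with its sequential version (\autoref{prp:llInLAff}), and because \autoref{prp:maxSoft} exhibits $\alpha(f_N)$ as $\sup L_{f_N}$, turning membership $a\in L_{f_N}$ directly into the bound $a\leq\alpha(f_N)$. Finally I would remark that $\sup_n\alpha(f_n)$ computed in $S$ coincides with the one computed in $S_\txtSoft$, since $S_\txtSoft$ is closed under suprema of increasing sequences by \autoref{prp:structureSimpleCu}, so the conclusion is consistent with viewing $\alpha$ as a map into $S_\txtSoft$.
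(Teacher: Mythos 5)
Your proof is correct, and while the order and infima halves coincide in substance with the paper, the suprema half takes a genuinely different route. For order and infima you simply unwind the abstract fact the paper cites, namely that an upper adjoint of a Galois connection preserves existing infima (\cite[Theorem~O-3.3, p.24]{GieHof+03Domains}); your direct computation with lower bounds $p\in S_\txtSoft^\times$ is exactly the proof of that general statement in this instance. For suprema of directed sequences the paper again argues abstractly: it invokes \cite[Theorem~IV-1.4, p.268]{GieHof+03Domains}, which says the upper adjoint preserves suprema of increasing nets once the lower adjoint preserves the way-below relation and its source is a domain, and it verifies the hypothesis via \autoref{prp:wayBelowSoft} (i.e.\ $\kappa$ preserves $\ll$ on soft elements). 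You instead reprove the needed instance by hand on the $\LAff(K)$ side: using $\alpha(f)=\sup L_f$ from \autoref{prp:maxSoft}, you interpolate $\widehat{a}_{|K}\ll g\ll f$ in the domain $\LAff(K)$ and absorb $g$ into the directed sequence to land $a$ in $L_{f_N}$, hence $a\leq\alpha(f_N)$. This is valid: interpolation holds in any domain, and here it is even concrete via the characterization of $\ll$ in \autoref{prp:llInLAff} (replace the witnessing $h\in\Aff(K)$, $\varepsilon>0$ by $g:=h-\tfrac{\varepsilon}{2}$). What the paper's citations buy is brevity and placement of the statement in general domain theory; what your argument buys is a self-contained verification whose only inputs are \autoref{prp:maxSoft}, \autoref{prp:llInLAff} and \autoref{prp:ctblBasedDCPO}, with no direct appeal to \autoref{prp:wayBelowSoft} at this step (that lemma still enters indirectly, inside the proof of \autoref{prp:maxSoft}). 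Your closing observation that the supremum of $(\alpha(f_n))_n$ computed in $S$ agrees with the one computed in $S_\txtSoft$, since $S_\txtSoft$ is closed under suprema of increasing sequences, tidies up a point the paper leaves implicit.
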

\begin{proof}
It follows from \autoref{prp:maxSoft} that $\alpha$ is order-preserving.
By \cite[Theorem~O-3.3, p.24]{GieHof+03Domains}, the upper adjoint of a Galois connection preserves arbitrary existing infima.
Since $K$ is a Choquet simplex, $\LAff(K)_{++}$ is an inf-semilattice;
see \autoref{prp:LAffRiesz}.
Thus, given $f,g\in\LAff(K)_{++}$, we have
\[
\alpha(f\wedge g)=\alpha(f)\wedge\alpha(g).
\]
By \cite[Theorem~IV-1.4, p.268]{GieHof+03Domains}, if the lower adjoint of a Galois connection preserves the way-below relation, and if the source of the lower adjoint is a domain, then the upper adjoint preserves suprema of increasing nets.
By \autoref{prp:wayBelowSoft}, the map $\kappa$ preserves the way-below relation, whence $\alpha$ preserves suprema of increasing sequences.
\end{proof}

\begin{qst}
Is the map $\alpha\colon\LAff(F(S))_{++}\to S_\txtSoft$ additive?
Does it preserve the way-below relation?
\end{qst}

\begin{thm}
\label{prp:realizeRankCa}
Let $A$ be a separable, unital, simple, non-elementary \ca{} with stable rank one.
Set $K:=\QT_{1\mapsto 1}(A)$.
Then for every $f\in\LAff(K)_{++}$ there exists $a\in(A\otimes\KK)_+$ such that $d_\tau(a)=f(\tau)$ for every $\tau\in K$.
\end{thm}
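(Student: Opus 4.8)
The plan is to reduce the statement to the abstract realization result \autoref{prp:realizeRank} by passing to the Cuntz semigroup. I would set $S:=\Cu(A)$ and $u:=[1_A]$. Almost all of the content is already packaged into the earlier sections, so the real work is to check that the pair $(S,u)$ meets every standing hypothesis of \autoref{sec:rankCu}, after which the conclusion is obtained by translating the resulting Cuntz class back to a positive element of $A\otimes\KK$.

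First I would assemble the structural properties of $S$. By \autoref{prp:propertiesCuA}, since $A$ is separable, simple and has stable rank one, $S$ is a countably based, simple, weakly cancellative \CuSgp{} satisfying \axiomO{5} and \axiomO{6}. Weak cancellation together with simplicity yields stable finiteness (see \autoref{pgr:CuFurther}), and the non-elementarity of $A$ transfers to $S$. The essential upgrade is \autoref{prp:RieszSR1}, which uses stable rank one to promote \axiomO{6} to \axiomO{6+}. Finally, since $1_A$ is a projection we have $u\ll u$, so $u$ is compact, and $u$ is nonzero, hence full by simplicity of $S$.

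Next I would identify the relevant simplices. Under the isomorphism $\QT(A)\cong F(S)$ recalled in \autoref{pgr:qt}, the simplex $K=\QT_{1\mapsto 1}(A)$ corresponds to $F_{u\mapsto 1}(S)$, with each $\tau$ sent to the functional $d_\tau$ and with $\widehat{[x]}(d_\tau)=d_\tau(x)$ for $x\in(A\otimes\KK)_+$. In particular this isomorphism carries $\partial_e K$ onto $\partial_e F_{u\mapsto 1}(S)$, so \autoref{prp:EdwardsCa} shows that $S$ satisfies Edwards' condition for $\partial_e K$. At this point every hypothesis imposed throughout \autoref{sec:rankCu} has been verified.

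It then remains to invoke \autoref{prp:realizeRank}: given $f\in\LAff(K)_{++}$ it produces $s\in S$ with $\widehat{s}_{|K}=f$. Choosing any $a\in(A\otimes\KK)_+$ with $[a]=s$ gives $d_\tau(a)=\widehat{s}(d_\tau)=f(\tau)$ for every $\tau\in K$, which is exactly the assertion. I expect no genuine obstacle at this final stage; the entire difficulty has already been absorbed into the three ingredients \autoref{prp:RieszSR1}, \autoref{prp:EdwardsCa} and \autoref{prp:realizeRank}. The only points that warrant care are the bookkeeping of the identification $\QT(A)\cong F(S)$ (in particular that it respects extreme points and is compatible with the rank pairing), and the compactness and fullness of $u$, since these are precisely what guarantee that $K$ is the Choquet simplex on which the abstract machine operates.
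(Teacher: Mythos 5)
Your proposal is correct and follows essentially the same route as the paper's own proof: verify the standing hypotheses of \autoref{sec:rankCu} for $S=\Cu(A)$ and $u=[1_A]$ (countably based, simple, non-elementary, weak cancellation, \axiomO{5}, \axiomO{6+} via \autoref{prp:RieszSR1}, Edwards' condition via \autoref{prp:EdwardsCa}, compactness and fullness of $u$, and the identification $\QT(A)\cong F(S)$), then apply \autoref{prp:realizeRank} and lift the resulting class to a positive element of $A\otimes\KK$. You even make explicit one small point the paper leaves implicit, namely that stable finiteness of $S$ follows from weak cancellation together with simplicity (\autoref{pgr:CuFurther}).
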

\begin{proof}
Let $f\in\LAff(K)_{++}$.
Set $S:=\Cu(A)$.
It follows from the properties of $A$ that $S$ is a countably based, simple, non-elementary \CuSgp{} satisfying \axiomO{5}.
By \cite[Theorem~4.3]{RorWin10ZRevisited}, the Cuntz semigroup of a \ca{} with stable rank one is weakly cancellative;
see also \cite[Chapter~4]{AntPerThi18:TensorProdCu}.
It follows from \autoref{prp:RieszSR1} that $S$ satisfies \axiomO{6+}.

The class $u:=[1_A]$ is a compact, full element in $S$.
Under the identification of $\QT(A)$ with $F(S)$, see \autoref{pgr:qt}, the set $K$ corresponds to $F_{u\mapsto 1}(S)$.
It follows from \autoref{prp:EdwardsCa} that $S$ satisfies Edwards' condition for $\partial_e K$.

We may therefore apply \autoref{prp:realizeRank} for $S$ and $f$ to obtain $s\in S$ such that $\widehat{s}_{|K}=f$.
Then any $a\in(A\otimes\KK)_+$ with $s=[a]$ has the desired property.
\end{proof}

\begin{cor}
\label{prp:almDivFromAlmUnperf}
Let $A$ be a separable, unital, simple, non-elementary \ca{} with stable rank one.
Assume that $\Cu(A)$ is almost unperforated, that is, that $A$ has strict comparison of positive elements.
Then $\Cu(A)$ is almost divisible.
Moreover, there are order-isomorphisms
\[
\Cu(A)
\cong V(A) \sqcup \LAff(\QT_{1\mapsto 1}(A))_{++}
\cong\Cu(A\otimes\mathcal{Z}).
\]
The map $A\to A\otimes\mathcal{Z}$, $a\mapsto a\otimes 1$, induces the isomorphism $\Cu(A)\cong\Cu(A\otimes\mathcal{Z})$.
\end{cor}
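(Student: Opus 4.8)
The plan is to first upgrade strict comparison to almost divisibility, then read off the structure of $\Cu(A)$, and finally compare with $\Cu(A\otimes\mathcal{Z})$. Write $S:=\Cu(A)$. By \autoref{prp:propertiesCuA}, $S$ is a countably based, simple, non-elementary \CuSgp{} satisfying \axiomO{5} and \axiomO{6}, and it is weakly cancellative, hence stably finite, because $A$ has stable rank one. The class $u:=[1_A]$ is compact and full, and by \autoref{pgr:qt} we may identify $K:=F_{u\mapsto 1}(S)$ with $\QT_{1\mapsto 1}(A)$. By \autoref{prp:realizeRank} together with \autoref{prp:maxSoft}, every $f\in\LAff(K)_{++}$ is realized as $\widehat{a}_{|K}$ for a \emph{soft} element $a\in S$; this is exactly condition~(4) of \autoref{prp:charAlmDiv}. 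Since $S$ is almost unperforated by hypothesis, \autoref{prp:charAlmDiv} shows that $S$ is almost divisible and that the rank map $\kappa\colon S_\txtSoft\to F(S)^*_\txtLsc$ of its condition~(3) is an order-isomorphism.

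Next I would identify the two halves of the decomposition $S=S_c\sqcup S_\txtSoft^\times$ from \autoref{prp:structureSimpleCu}(1). Since $A$ has stable rank one, projections in $A\otimes\KK$ enjoy cancellation, so the canonical map $V(A)\to S$ is an order-embedding whose image is precisely the set $S_c$ of compact elements, giving $S_c\cong V(A)$. As shown in the proof of \autoref{prp:charAlmDiv}, restriction to $K$ identifies $F(S)^*_\txtLsc$ with $\LAff(K)_{++}\cup\{0\}$, so the order-isomorphism $\kappa$ restricts to an order-isomorphism $S_\txtSoft^\times\cong\LAff(K)_{++}$, $a\mapsto\widehat{a}_{|K}$. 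Assembling these gives $\Cu(A)\cong V(A)\sqcup\LAff(\QT_{1\mapsto 1}(A))_{++}$. Here the operations on the right are those transported from $S$: within each summand they are the evident ones; a compact element added to a soft one is soft with rank the sum of the ranks; and comparison between a compact and a soft element is controlled by almost unperforation (\autoref{prp:charAlmUnp}) and softness.

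For the second isomorphism, set $B:=A\otimes\mathcal{Z}$. It is separable, unital, simple, and non-elementary; being stably finite and $\mathcal{Z}$-stable it has stable rank one by \cite[Theorem~6.7]{Ror04StableRealRankZ} and, by Rørdam's theorem, strict comparison of positive elements. Thus $B$ satisfies the same hypotheses as $A$, so Paragraphs~1--2 apply to $B$ and yield $\Cu(B)\cong V(B)\sqcup\LAff(\QT_{1\mapsto 1}(B))_{++}$. The \stHom{} $a\mapsto a\otimes 1$ induces a \CuMor{} $\varphi\colon\Cu(A)\to\Cu(B)$. Because $\mathcal{Z}$ has a unique quasitrace, precomposition with $\varphi$ gives an affine homeomorphism $\QT_{1\mapsto 1}(B)\cong\QT_{1\mapsto 1}(A)$, so $\varphi$ preserves ranks; and because $\mathcal{Z}$ is $KK$-equivalent to $\CC$, $\varphi$ induces an order-isomorphism $K_0(A)\cong K_0(B)$, hence $V(A)\cong V(B)$. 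Under the decompositions of Paragraph~2 these are exactly the identifications that make $\varphi$ carry each summand bijectively onto the corresponding summand of $\Cu(B)$, whence $\varphi$ is an order-isomorphism and the claimed chain of isomorphisms follows.

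The step I expect to be the main obstacle is verifying that $\varphi$ respects the gluing between the two summands: one must show that $\varphi$ sends soft elements to soft elements (rather than collapsing a soft element onto a compact one) and that it reflects the order between heterogeneous elements. The subtle point is that a soft and a compact element may carry the same rank---as already happens in $\Cu(\mathcal{Z})\cong\NN\sqcup(0,\infty]$---so rank data alone do not determine an element; the argument must combine the rank data with the $K_0$/$V(A)$ data and invoke almost unperforation to recover the mixed order relations. Once this is settled, bijectivity of $\varphi$ on each summand together with the almost divisibility from Paragraph~1 guarantees that every function in $\LAff(\QT_{1\mapsto 1}(B))_{++}$ is attained, completing the proof.
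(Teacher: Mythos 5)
Your first two paragraphs follow the paper's proof essentially verbatim: realize every $f\in\LAff(K)_{++}$, $K=\QT_{1\mapsto 1}(A)$, as the rank of an element (\autoref{prp:realizeRankCa}), feed this into condition~(4) of \autoref{prp:charAlmDiv} to obtain almost divisibility together with the order-isomorphism $\Cu(A)_\txtSoft\cong\LAff(K)_{++}\cup\{0\}$, and combine with $\Cu(A)_c\cong V(A)$ via the decomposition $\Cu(A)=\Cu(A)_c\sqcup\Cu(A)_\txtSoft^\times$ of \autoref{prp:structureSimpleCu}. The genuine gap is in your third paragraph, at the compact part of the comparison with $A\otimes\mathcal{Z}$: you assert that ``because $\mathcal{Z}$ is $KK$-equivalent to $\CC$, $\varphi$ induces an order-isomorphism $K_0(A)\cong K_0(B)$.'' $KK$-equivalence yields only a \emph{group} isomorphism $K_0(A)\to K_0(A\otimes\mathcal{Z})$; the order is not $KK$-data, and for general simple, stably finite $A$ the map $a\mapsto a\otimes 1$ fails to be an order-isomorphism on $K_0$ --- the failure of exactly this is the subject of \cite{GonJiaSu00ObstrZstability}. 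By \cite[Theorem~1]{GonJiaSu00ObstrZstability}, the induced map is an order-isomorphism if and only if $K_0(A)$ is weakly unperforated, so at this point you must invoke the standing hypothesis once more and check that almost unperforation of $\Cu(A)$ forces weak unperforation of $K_0(A)$ (via $V(A)\cong K_0(A)_+\cong\Cu(A)_c$ and \cite[Section~3]{Ror04StableRealRankZ}); this is precisely how the paper closes the step. As written, your justification would produce an order-isomorphism $V(A)\cong V(A\otimes\mathcal{Z})$ even for algebras with perforated $K_0$, which is false.

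By contrast, the issue you single out as the main obstacle --- that $\varphi$ might collapse a soft element onto a compact one of equal rank, and that mixed order relations must be recovered --- is not where the difficulty lies; the paper dispatches it simply by citing \autoref{prp:structureSimpleCu}(1). Once the compact parts and the soft parts are matched by order-isomorphisms induced by the same map $\iota$, the heterogeneous data are determined by what you already have: a compact plus a nonzero soft element is soft with summed rank (\autoref{pgr:soft}); a soft $b$ lies below a compact $a$ if and only if $b\leq\varrho(a)$, equivalently $\widehat{b}\leq\widehat{a}$ (\autoref{prp:realizeRankBySoft} and \autoref{prp:charAlmUnp}); and a nonzero compact $a$ lies below a soft $b$ if and only if $\lambda(a)<\lambda(b)$ for all $\lambda\in K$, by softness of $b$ and almost unperforation. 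So no separate analysis of $\varphi$ on mixed pairs is needed; the one genuinely missing ingredient in your proposal is the Gong--Jiang--Su criterion above.
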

\begin{proof}
Set $K:=\QT_{1\mapsto 1}(A)$.
It follows from the properties of $A$ that $\Cu(A)$ is a countably based, simple, stably finite, non-elementary \CuSgp{} satisfying \axiomO{5} and \axiomO{6}.
By \autoref{prp:realizeRankCa}, $\Cu(A)$ satisfies statement~(4) of \autoref{prp:charAlmDiv}.
We deduce that $\Cu(A)$ is almost divisible and that $\kappa\colon \Cu(A)_\txtSoft\to\LAff(K)_{++}$ is an order-isomorphism.
There is a natural affine homeomorphism between the simplex of normalized $2$-quasitraces on $A$ and $A\otimes\mathcal{Z}$.
We therefore have an order-isomorphisms
\[
\Cu(A)_\txtSoft \cong \LAff(K)_{++} \cong \Cu(A\otimes\mathcal{Z})_\txtSoft.
\]
We have an order-isomorphism between $\Cu(A)_c$ and the Muray-von Neumann semigroup $V(A)$.
Since $A$ is unital and has stable rank one, $V(A)$ is order-isomorphic to $K_0(A)_+$, the positive part of the partially ordered group $K_0(A)$.
Similarly, $V(A\otimes\mathcal{Z})\cong K_0(A\otimes\mathcal{Z})_+$.

By \cite[Theorem~1]{GonJiaSu00ObstrZstability}, the map $\iota$ induces an order-isomorphism $K_0(A)\to K_0(A\otimes\mathcal{Z})$ if and only if $K_0(A)$ is weakly unperforated.
This condition is verified using \cite[Section~3]{Ror04StableRealRankZ}.
Thus, we have order-isomorphisms
\[
\Cu(A)_c \cong V(A) \cong K_0(A)_+ \cong K_0(A\otimes\mathcal{Z})_+
\cong V(A\otimes\mathcal{Z})
\cong \Cu(A\otimes\mathcal{Z})_c.
\]
The result follows using the decomposition of a simple \CuSgp{} into its compact and soft part;
see \autoref{prp:structureSimpleCu}(1).
\end{proof}

\section{The Toms-Winter conjecture}
\label{sec:TW}

\begin{cnj}[Toms-Winter]
\label{cnj:TW}
Let $A$ be a separable, unital, simple, non-el\-e\-men\-ta\-ry, nuclear \ca{}.
Then the following are equivalent:
\begin{enumerate}
\item
$A$ has finite nuclear dimension.
\item
$A$ is $\mathcal{Z}$-stable, that is, $A\cong A\otimes\mathcal{Z}$.
\item
$\Cu(A)$ is almost unperforated. 
\end{enumerate}
\end{cnj}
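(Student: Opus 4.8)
The plan is to read the conjecture as a circle of implications and to make clear which arrows are classical and which are supplied by the rank-realization results of this paper. I emphasize at the outset that the conjecture is open, so I would not attempt to close the circle in general; I would only do so under the additional hypotheses appearing in \autoref{prp:TWlocFinNuclDim} and \autoref{prp:TW-ASHsr1}, namely stable rank one together with (locally) finite nuclear dimension.

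Two of the arrows require no input from the present paper. The implication (2)$\Rightarrow$(3) is R{\o}rdam's theorem that the Cuntz semigroup of a simple, unital, $\mathcal{Z}$-stable \ca{} is almost unperforated, which I would simply cite from \cite{Ror04StableRealRankZ}. The equivalence (1)$\Leftrightarrow$(2) for nuclear $A$ is by now also known: finite nuclear dimension forces $\mathcal{Z}$-stability, and conversely $\mathcal{Z}$-stability of a nuclear algebra forces finite nuclear dimension through the work on nuclear dimension and classification. I would therefore treat this pair as a black box and concentrate all the work on linking condition~(3) to the other two.

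This leaves the implication (3)$\Rightarrow$(2), deriving $\mathcal{Z}$-stability from strict comparison, which is the genuine obstacle and the only direction still open in general. My route runs entirely through the Cuntz semigroup. Assuming stable rank one, \autoref{prp:realizeRankCa} already shows, unconditionally, that every $f\in\LAff(\QT_{1\mapsto 1}(A))_{++}$ is realized as a rank; feeding in condition~(3), that is, almost unperforation of $\Cu(A)$, the characterization in \autoref{prp:charAlmDiv} together with \autoref{prp:almDivFromAlmUnperf} upgrades this to almost divisibility and produces an order-isomorphism $\Cu(A)\cong\Cu(A\otimes\mathcal{Z})$ induced by $a\mapsto a\otimes 1$. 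The remaining task is to convert this regularity of the \emph{invariant} into genuine $\mathcal{Z}$-stability of the \emph{algebra}, and it is exactly here that an external hypothesis enters: under locally finite nuclear dimension this passage is carried out in \autoref{prp:TWlocFinNuclDim}, and for approximately subhomogeneous algebras with stable rank one the entire circle closes in \autoref{prp:TW-ASHsr1}.

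The hardest point, and the reason the full conjecture resists this method, is precisely this last passage from a regular invariant to a regular algebra: $\Cu(A)$ records comparison and division of positive elements but not the central sequences that witness $\mathcal{Z}$-stability, so one must import structural input — stable rank one to realize ranks, finite nuclear dimension to localize — which the techniques here supply only under those extra assumptions. I expect the decisive missing ingredient for the unconditional case to be the removal of the stable-rank-one hypothesis from \autoref{prp:realizeRankCa}, since it is that hypothesis (through \axiomO{6+} and \autoref{prp:fctlInf}) that makes the whole rank-realization machinery run.
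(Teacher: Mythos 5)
Your treatment is correct and mirrors the paper's own handling of this statement: since it is an open conjecture, the paper likewise proves nothing unconditional here, recording only the classical arrows (Winter's $(1)\Rightarrow(2)$, R{\o}rdam's $(2)\Rightarrow(3)$, and, per \autoref{rmk:CETWW}, the now-general $(2)\Rightarrow(1)$) and closing $(3)\Rightarrow(2)$ exactly along your route --- rank realization under stable rank one, almost divisibility via \autoref{prp:almDivFromAlmUnperf}, then Winter's result under locally finite nuclear dimension, as packaged in \autoref{prp:TWlocFinNuclDim} and \autoref{prp:TW-ASHsr1}. There is nothing to correct.
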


\begin{rmks}
\label{rmk:TW}
(1)
The nuclear dimension is a non-commutative analogue of topological covering dimension, introduced by Winter-Zacharias \cite[Definition~2.1]{WinZac10NuclDim}.

(2)
The Jiang-Su algebra $\mathcal{Z}$, introduced in \cite{JiaSu99Projectionless}, is a separable, unital, simple, non-el\-e\-men\-ta\-ry, approximately subhomogeneous \ca{} with unique tracial state and $K_0(\mathcal{Z})\cong\ZZ$ and $K_1(\mathcal{Z})=0$.
Being $\mathcal{Z}$-stable is considered as the \ca{ic} analogue of being a McDuff von Neumann factor.

(3)
Let $A$ be a unital, simple \ca{}.
Recall that $A$ is said to have \emph{strict comparison of positive elements} if for all $a,b\in(A\otimes\KK)_+$ we have $a\precsim b$ whenever $d_\tau(a)<d_\tau(b)$ for all $\tau\in\QT_{1\mapsto 1}(A)$.
Let $S$ be a simple \CuSgp{} with a compact, full element $u\in S$.
Then $S$ is almost unperforated if and only if for all $a,b\in S$ we have $a\leq b$ whenever $\lambda(a)<\lambda(b)$ for all $\lambda\in F_{u\mapsto 1}(S)$;
see \cite[Proposition~5.2.14]{AntPerThi18:TensorProdCu}.
For $S=\Cu(A)$ we have a natural identification of $\QT_{1\mapsto 1}(A)$ with $F_{[1]\mapsto 1}(\Cu(A))$, that maps $\tau$ to $d_\tau$.
It follows that $A$ has strict comparison of positive elements if and only if $\Cu(A)$ is almost unperforated.

(4)
The regularity conjecture of Toms-Winter is intimately connected to the Elliott classification program, which seeks to classify simple, nuclear \ca{s} by $K$-theoretical and tracial data.
Examples of Toms, \cite{Tom08ClassificationNuclear}, showed that an additional regularity assumption is necessary to obtain such a classification.

Due to a series of remarkable breakthroughs in the last three years, building on an extensive body of work over decades by numerous people, the classification of separable, unital, simple, non-el\-e\-men\-ta\-ry \ca{s} with finite nuclear dimension that satisfy the universal coefficient theorem (UCT) has been completed;
see \cite{EllGonLinNiu15arX:classFinDR2} and \cite[Corollary~D]{TikWhiWin17QDNuclear}.

In particular, it follows from \autoref{prp:TW-ASHsr1} that separable, unital, simple, approximately subhomogeneous \ca{s} with stable rank one and strict comparison of positive elements are classified by $K$-theoretic and tracial data.
\end{rmks}

\begin{pgr}
\label{pgr:TW}
The implications `(1)$\Rightarrow$(2)' and `(2)$\Rightarrow$(3)' of the Toms-Winter conjecture have been confirmed in general.
The first implication is due to Winter, \cite[Corollary~7.3]{Win12NuclDimZstable}.
The second was shown by R{\o}rdam, \cite[Theorem~4.5]{Ror04StableRealRankZ}.

The implication `(2)$\Rightarrow$(1)' was shown to hold under the additional assumption that $\partial_e T(A)$ is compact,
see \cite[Theorem~B]{BBSTWW19},
generalizing the earlier solution of the monotracial case,
\cite{MatSat14DecRankUHFAbs}, \cite{SatWhiWin15NuclDimZstab}.
Very recently, it was shown that the implication `(2)$\Rightarrow$(1)' holds in general;
see \autoref{rmk:CETWW}.

The implication `(3)$\Rightarrow$(2)' was shown to hold under the additional assumption that $\partial_e T(A)$ is compact and finite dimensional by independent works of Kirchberg-R{\o}rdam, \cite{KirRor14CentralSeq}, Sato, \cite{Sat12arx:TraceSpace}, and Toms-White-Winter, \cite{TomWhiWin15ZStableFdBauer}, extending the work of Matuia-Sato, \cite{MatSat12StrComparison}, that covered the case of finitely many extreme traces.

This was further extended by Zhang, \cite{Zha14TracialStateNoncpctBdry}, who showed that the implication `(3)$\Rightarrow$(2)' also holds in certain cases where $\partial_e T(A)$ is finite dimensional but not necessarily compact.
\end{pgr}

\begin{pgr}
Let $A$ be a separable, unital, simple, non-elementary, nuclear \ca{}.
The Cuntz semigroup of every $\mathcal{Z}$-stable \ca{} is almost unperforated and almost divisible.
Thus, the Toms-Winter conjecture predicts in particular that $\Cu(A)$ is almost divisible whenever it is almost unperforated.
Moreover, to prove the implication `(3)$\Rightarrow$(2)' of the Toms-Winter conjecture, the verification of almost divisibility of $\Cu(A)$ is crucial.
Winter showed that it is even enough under the additional assumption that $A$ has locally finite nuclear dimension;
see \cite[Corollary~7.4]{Win12NuclDimZstable}.
By \autoref{prp:almDivFromAlmUnperf}, if $A$ has stable rank one, then $\Cu(A)$ is almost divisible whenever it is almost unperforated.
Combined with the result of Winter, we obtain:
\end{pgr}

\begin{thm}
\label{prp:TWlocFinNuclDim}
Let $A$ be a separable, unital, simple, non-elementary \ca{} with stable rank one and locally finite nuclear dimension.
Then $A$ is $\mathcal{Z}$-stable if and only if $A$ has strict comparison of positive elements.
(See also \autoref{rmk:CETWW}.)
\end{thm}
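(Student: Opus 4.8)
The plan is to prove both implications by assembling results established earlier in the excerpt and in the literature; the only genuinely new input is \autoref{prp:almDivFromAlmUnperf}, which in turn rests on the main rank-realization theorem of the paper. Throughout, I would use \autoref{rmk:TW}(3) as a dictionary identifying strict comparison of positive elements for $A$ with almost unperforation of $\Cu(A)$, so that I may pass freely between the \ca{ic} and Cuntz-semigroup formulations.

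For the forward implication ($\mathcal{Z}$-stable $\Rightarrow$ strict comparison), I would simply invoke R{\o}rdam's theorem \cite[Theorem~4.5]{Ror04StableRealRankZ}, which establishes the implication `(2)$\Rightarrow$(3)' of the Toms-Winter conjecture in full generality: the Cuntz semigroup of any $\mathcal{Z}$-stable \ca{} is almost unperforated. By the dictionary above, this is precisely strict comparison. Note that neither stable rank one nor locally finite nuclear dimension is needed for this direction.

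For the converse (strict comparison $\Rightarrow$ $\mathcal{Z}$-stable), the argument proceeds in three steps. First, strict comparison yields that $\Cu(A)$ is almost unperforated. Second, since $A$ has stable rank one, I would apply \autoref{prp:almDivFromAlmUnperf} to conclude that $\Cu(A)$ is almost divisible; this is the step where the rank-realization theorem \autoref{prp:realizeRankCa} enters, through the chain of equivalences in \autoref{prp:charAlmDiv}. Third, having both almost unperforation and almost divisibility of $\Cu(A)$, together with the standing hypothesis of locally finite nuclear dimension, I would invoke Winter's \cite[Corollary~7.4]{Win12NuclDimZstable} to conclude that $A$ is $\mathcal{Z}$-stable.

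The main obstacle is not in this assembly but is already packaged inside \autoref{prp:almDivFromAlmUnperf}, whose proof rests on the hard analytic content of the paper (Edwards' condition, the axiom \axiomO{6+}, and the realization of arbitrary ranks). The only subtlety to watch in the present statement is bookkeeping: I must carry the almost-unperforation hypothesis through to the final step, since Winter's criterion verifies the implication `(3)$\Rightarrow$(2)' and hence takes almost unperforation as part of its input \emph{alongside} almost divisibility. With that caveat observed, the stated equivalence follows.
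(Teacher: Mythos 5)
Your proposal is correct and is essentially identical to the paper's own argument: the forward direction is R{\o}rdam's general implication `(2)$\Rightarrow$(3)' from \cite[Theorem~4.5]{Ror04StableRealRankZ}, and the converse combines \autoref{rmk:TW}(3) with \autoref{prp:almDivFromAlmUnperf} (where stable rank one and the rank-realization machinery enter) and then Winter's \cite[Corollary~7.4]{Win12NuclDimZstable}. Your bookkeeping remark that Winter's criterion takes almost unperforation as input alongside almost divisibility is also exactly how the paper assembles the conclusion.
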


It was shown by Elliott-Niu-Santiago-Tikuisis, \cite[Theorem~1.1]{EllNiuSanTik15arX:drASH}, that the nuclear dimension of every (not necessarily simple or unital) separable, approximately subhomogeneous, $\mathcal{Z}$-stable \ca{} is finite.
Ng-Winter, \cite{NgWin06NoteSH}, proved that every separable, approximately subhomogeneous \ca{} has locally finite nuclear dimension.
Combining with the above result, we deduce:

\begin{thm}
\label{prp:TW-ASHsr1}
The Toms-Winter conjecture holds for approximately subhomogeneous \ca{s} with stable rank one.
(See also \autoref{rmk:CETWW}.)
\end{thm}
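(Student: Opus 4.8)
The plan is to deduce the full equivalence $(1)\Leftrightarrow(2)\Leftrightarrow(3)$ of \autoref{cnj:TW} by assembling the main result of this paper with known structural facts about approximately subhomogeneous (ASH) algebras. Let $A$ be a separable, unital, simple, non-elementary, ASH \ca{} with stable rank one. Since ASH algebras are nuclear, $A$ satisfies the standing hypotheses of \autoref{cnj:TW}, so it suffices to verify the implications among the three conditions.

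First I would record the two implications that hold for \emph{every} separable, unital, simple, nuclear \ca{}, without any appeal to stable rank one or the ASH structure: $(1)\Rightarrow(2)$ is due to Winter and $(2)\Rightarrow(3)$ is due to R{\o}rdam; see \autoref{pgr:TW}. These are available as input and require no further argument.

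The crucial new step is $(3)\Rightarrow(2)$. Here I would invoke the theorem of Ng-Winter, \cite{NgWin06NoteSH}, that every separable ASH algebra has locally finite nuclear dimension. Combined with the hypothesis of stable rank one, this places $A$ in the scope of \autoref{prp:TWlocFinNuclDim}, whose conclusion is exactly that $\mathcal{Z}$-stability is equivalent to strict comparison of positive elements. In particular, strict comparison (that is, almost unperforation of $\Cu(A)$; see \autoref{rmk:TW}(3)) yields $\mathcal{Z}$-stability. This is the step that genuinely uses the work of the paper: \autoref{prp:TWlocFinNuclDim} rests on \autoref{prp:almDivFromAlmUnperf}, which in turn is extracted from the rank-realization result \autoref{prp:realizeRankCa} and asserts that for algebras of stable rank one almost unperforation already forces almost divisibility of the Cuntz semigroup. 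I expect this to be the conceptual heart of the matter, even though at the level of the present theorem it enters only as a citation to earlier results.

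Finally, for $(2)\Rightarrow(1)$ I would use the theorem of Elliott-Niu-Santiago-Tikuisis, \cite[Theorem~1.1]{EllNiuSanTik15arX:drASH}, that every separable, $\mathcal{Z}$-stable ASH algebra has finite nuclear dimension; for the ASH class this conveniently bypasses the much harder general proof of $(2)\Rightarrow(1)$. Combining the four implications $(1)\Rightarrow(2)$, $(2)\Rightarrow(3)$, $(3)\Rightarrow(2)$ and $(2)\Rightarrow(1)$ closes the equivalence: $(1)\Leftrightarrow(2)$ follows from the first and last, and $(2)\Leftrightarrow(3)$ follows from the middle two. This completes the verification of the Toms-Winter conjecture for the stated class.
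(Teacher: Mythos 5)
Your proposal is correct and follows exactly the paper's argument: the paper likewise combines the Ng--Winter result on locally finite nuclear dimension of separable ASH algebras with \autoref{prp:TWlocFinNuclDim} to obtain the equivalence of $\mathcal{Z}$-stability and strict comparison, uses \cite[Theorem~1.1]{EllNiuSanTik15arX:drASH} for the implication from $\mathcal{Z}$-stability to finite nuclear dimension, and closes the cycle with the general implications recorded in \autoref{pgr:TW}. You have also correctly located the genuinely new input, namely that \autoref{prp:TWlocFinNuclDim} rests on \autoref{prp:almDivFromAlmUnperf} and ultimately on the rank-realization theorem \autoref{prp:realizeRankCa}.
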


\begin{rmk}
The class of algebras covered by \autoref{prp:TW-ASHsr1} is very rich and it includes both $\mathcal{Z}$-stable and non-$\mathcal{Z}$-stable algebras.
For example, by \cite[Theorem~4.1]{EllHoTom09ClassSimpleSR1}, all diagonal AH-algebras have stable rank one, whence they are covered by \autoref{prp:TW-ASHsr1}.
This includes in particular the Villadsen algebras of first type, \cite{Vil98SimpleCaPerforation}, and Toms' celebrated examples, \cite{Tom08ClassificationNuclear}.
\end{rmk}

\begin{pgr}
\label{pgr:crossedProd}
Let $X$ be an infinite, compact, metrizable space, and let $\alpha\colon X\to X$ be a minimal homeomorphism.
Then $\alpha$ induces an automorphism of $C(X)$.
The crossed product $A:=C(X)\rtimes\ZZ$ is a separable, unital, simple, non-elementary, nuclear \ca{}.
Let $u\in A$ be the canonical unitary implementing $\alpha$.
Choose $x\in X$ and set
\[
A_x :=  C^*\big( C(X), C_0(X\setminus\{x\})u \big) \subseteq A,
\]
the `orbit breaking subalgebra' of $A$ corresponding to $x$.
Then $A_x$ is a separable, unital, simple, non-elementary, approximately subhomogeneous \ca{}.
By combining \cite[Theorem~7.10]{Phi14arX:LargeSub} and \cite[Theorem~4.6]{ArcPhi15arX:SRCentrLargeSub}, we obtain that $A_x$ is a `centrally large subalgebra' of $A$, in the sense of \cite[Definition~3.2]{ArcPhi15arX:SRCentrLargeSub}.
By \cite[Theorem~6.14]{Phi14arX:LargeSub}, $\Cu(A)$ is almost unperforated if and only if $\Cu(A_x)$ is.
By Theorem~3.3 and Corollary~3.5 in \cite{ArcBucPhi18:CentrLargeSub}, $A$ is $\mathcal{Z}$-stable if and only if $A_x$ is.

Since $A_x$ is approximately subhomogeneous, it follows from \cite[Theorem~A]{EllNiuSanTik15arX:drASH} that $A_x$ has finite nuclear dimension (denoted $\dimnuc(A_x)<\infty$) whenever $A_x$ is $\mathcal{Z}$-stable.
By \cite[Corollary~4.8]{EllNiuSanTik15arX:drASH}, the analogous statement holds also for $A$.
We show the various implications in the following diagram.
The statements in the three columns correspond to the three statements of the Toms-Winter conjecture.
The horizontal implications towards the right hold in general;
see \autoref{pgr:TW}.
\[
\xymatrix{
\dimnuc(A)<\infty \ar[r]
& A\cong A\otimes\mathcal{Z} \ar[r] \ar@/_1pc/[l]_{\text{ENST}} \ar@{<->}[d]^{\text{ABP}}
& \Cu(A) \text{ almost unperforated} \ar@{<->}[d]^{\text{P}} \\
\dimnuc(A_x)<\infty \ar[r]
& A_x\cong A_x\otimes\mathcal{Z} \ar[r] \ar@/_1pc/[l]_{\text{ENST}}
& \Cu(A_x) \text{ almost unperforated}
}\,
\]
Thus, to verify the Toms-Winter conjecture for $A$ and $A_x$ it remains only to verify that almost unperforation of their Cuntz semigroups implies $\mathcal{Z}$-stability.
Moreover, $A$ satisfies the Toms-Winter conjecture if and only if $A_x$ does.

The large subalgebra $A_x$ is approximately subhomogeneous (and separable, unital, simple, non-el\-e\-men\-ta\-ry, nuclear).
Therefore, we may apply \autoref{prp:TW-ASHsr1} to verify the Toms-Winter conjecture for $A_x$ if we know that $A_x$ has stable rank one.
It is conjectured by Archey-Niu-Phillips, \cite[Conjecture~7.2]{ArcPhi15arX:SRCentrLargeSub}, that $A$ always has stable rank one.
By \cite[Theorem~6.3]{ArcPhi15arX:SRCentrLargeSub}, $A$ has stable rank one whenever $A_x$ does and they use this to verify their conjecture in the case that the dynamical system has a Cantor factor.
We obtain:
\end{pgr}

\begin{thm}
\label{prp:TWcrossedProducts}
Let $X$ be an infinite, compact, metrizable space, together with a minimal homeomorphism on $X$.
Then the Toms-Winter conjecture holds for the crossed product $A=C(X)\rtimes\ZZ$ if it has large subalgebra with stable rank one.
In particular, the Toms-Winter conjecture holds for the crossed product $A=C(X)\rtimes\ZZ$ if the dynamical system has a Cantor factor.
\end{thm}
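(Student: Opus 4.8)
The plan is to reduce the Toms-Winter conjecture for $A$ to its validity for the orbit-breaking subalgebra $A_x$, which is covered by \autoref{prp:TW-ASHsr1}. First I would fix $x\in X$ for which the large subalgebra $A_x$ has stable rank one; such an $x$ exists by hypothesis. As recorded in \autoref{pgr:crossedProd}, $A_x$ is a separable, unital, simple, non-elementary, approximately subhomogeneous \ca{}. Since $A_x$ also has stable rank one, \autoref{prp:TW-ASHsr1} applies and shows that the three conditions of \autoref{cnj:TW} are equivalent for $A_x$; in particular, almost unperforation of $\Cu(A_x)$ implies that $A_x$ is $\mathcal{Z}$-stable.

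Next I would transport this to $A$ using the large-subalgebra comparison results collected in \autoref{pgr:crossedProd}. For $A$, the implications `(1)$\Rightarrow$(2)' and `(2)$\Rightarrow$(3)' hold in general, and `(2)$\Rightarrow$(1)' holds by \cite[Corollary~4.8]{EllNiuSanTik15arX:drASH}, so it suffices to establish `(3)$\Rightarrow$(2)' for $A$. Assuming that $\Cu(A)$ is almost unperforated, \cite[Theorem~6.14]{Phi14arX:LargeSub} yields that $\Cu(A_x)$ is almost unperforated; the conjecture for $A_x$ then gives that $A_x$ is $\mathcal{Z}$-stable; and finally Theorem~3.3 together with Corollary~3.5 in \cite{ArcBucPhi18:CentrLargeSub} transfer $\mathcal{Z}$-stability back to $A$. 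This closes the cycle and proves the conjecture for $A$.

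For the `in particular' clause I would invoke the work of Archey-Phillips: when the dynamical system admits a Cantor factor, the orbit-breaking subalgebra $A_x$ has stable rank one by (the proof of) \cite[Theorem~6.3]{ArcPhi15arX:SRCentrLargeSub}, so $A$ has a large subalgebra with stable rank one and the first part applies. There is no serious analytic obstacle here, since the hard work is packaged in \autoref{prp:TW-ASHsr1} and the comparison theorems for centrally large subalgebras. The only points requiring care are transporting each regularity property in the correct direction across the inclusion $A_x\subseteq A$, and, in the Cantor-factor case, ensuring that stable rank one is invoked for $A_x$ itself (rather than only for $A$), so that \autoref{prp:TW-ASHsr1} becomes applicable.
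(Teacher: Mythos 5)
Your proposal is correct and follows essentially the same route as the paper's own argument (laid out in \autoref{pgr:crossedProd}): reduce the conjecture for $A$ to the implication `(3)$\Rightarrow$(2)', verify it for the orbit-breaking subalgebra $A_x$ via \autoref{prp:TW-ASHsr1}, transfer almost unperforation and $\mathcal{Z}$-stability across the centrally large inclusion using \cite[Theorem~6.14]{Phi14arX:LargeSub} and Theorem~3.3 with Corollary~3.5 of \cite{ArcBucPhi18:CentrLargeSub}, and supply `(2)$\Rightarrow$(1)' for $A$ by \cite[Corollary~4.8]{EllNiuSanTik15arX:drASH}. The only minor looseness is attributing stable rank one of $A_x$ in the Cantor-factor case to the proof of \cite[Theorem~6.3]{ArcPhi15arX:SRCentrLargeSub} --- that theorem is the permanence result passing stable rank one from $A_x$ up to $A$, while the Cantor-factor verification is a separate argument of Archey--Phillips --- but this matches the paper's own level of precision and does not affect the proof.
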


It was also shown by R{\o}rdam that every unital, simple, stably finite, $\mathcal{Z}$-stable \ca{} has stable rank one;
see \cite[Theorem~6.7]{Ror04StableRealRankZ}.
Therefore, if a stably finite \ca{} satisfies either condition~(1) or~(2) of the Toms-Winter conjecture, then it has stable rank one.
We are led to ask the following:

\begin{qst}
Let $A$ be a separable, unital, simple, stably finite, nuclear \ca{} such that $\Cu(A)$ is almost unperforated.
Does $A$ have stable rank one?
\end{qst}

\begin{rmk}
\label{rmk:CETWW}
After this paper had been submitted, it was shown that the implication `(2)$\Rightarrow$(1)' of the Toms-Winter conjecture holds in general;
see \cite[Theorem~A]{CasEviTikWhiWin19arX:NucDimSimple}.
It follows that Theorems~\ref{prp:TWlocFinNuclDim} and~\ref{prp:TW-ASHsr1} can be generalized as follows:
The Toms-Winter conjecture holds for \ca{s} with locally finite nuclear dimension and stable rank one.
\end{rmk}


\providecommand{\etalchar}[1]{$^{#1}$}
\providecommand{\bysame}{\leavevmode\hbox to3em{\hrulefill}\thinspace}
\providecommand{\noopsort}[1]{}
\providecommand{\mr}[1]{\href{http://www.ams.org/mathscinet-getitem?mr=#1}{MR~#1}}
\providecommand{\zbl}[1]{\href{http://www.zentralblatt-math.org/zmath/en/search/?q=an:#1}{Zbl~#1}}
\providecommand{\jfm}[1]{\href{http://www.emis.de/cgi-bin/JFM-item?#1}{JFM~#1}}
\providecommand{\arxiv}[1]{\href{http://www.arxiv.org/abs/#1}{arXiv~#1}}
\providecommand{\doi}[1]{\url{http://dx.doi.org/#1}}
\providecommand{\MR}{\relax\ifhmode\unskip\space\fi MR }
\providecommand{\MRhref}[2]{%
	\href{http://www.ams.org/mathscinet-getitem?mr=#1}{#2}
}
\providecommand{\href}[2]{#2}

\end{document}